\documentclass[12pt,twoside]{article}
\textwidth=15truecm %14
\textheight=22.5truecm %21
\headheight=10mm
 \headsep=3mm
 \footskip=32pt
 \footnotesep=2pt
 \topmargin=-8mm
\usepackage{amsmath,amssymb,amsthm,mathrsfs,color,times,textcomp,sectsty,verbatim}
\usepackage{xcolor}
\usepackage[colorlinks=true]{hyperref}
\hypersetup{urlcolor=blue, citecolor=red, linkcolor=blue}
\usepackage[colorinlistoftodos]{todonotes}
\numberwithin{equation}{section}
\theoremstyle{plain}
\newtheorem{theorem}{Theorem}[section]
\newtheorem{proposition}[theorem]{Proposition}
\newtheorem{lemma}[theorem]{Lemma}
\newtheorem*{conjecture}{Conjecture}

\theoremstyle{definition}
\newtheorem{definition}[theorem]{Definition}
\vfuzz2pt % Don't report over-full v-boxes if over-edge is small
\hfuzz2pt % Don't report over-full h-boxes if over-edge is small

\newtheorem*{convention}{\textbf{Convention}}
\newtheorem{remark}[theorem]{Remark}

 \def \no{\nonumber}
\def \pa{\partial}
\def\q{\mathcal{Q}_{a,b}^q}

\def\e{\epsilon}

\def\R{\mathbb{R}}
\def\Rn{{\mathbb{R}}^n_+}
\def\d{\partial}

\def\a{\alpha}
\def\b{\beta}

\def\crit {\frac{2n}{n-2}}

\def\ba{\begin{align}}
\def\ea{\end{align}}
\def\bp{\begin{proof}}
\def\ep{\end{proof}}

\def\ubar{\bar{U}_{(x_0,\e)}}

\def\func:u{\bar{u}_{(x_0,\e)}}

\def\U{W_{\epsilon}}

\def\w{\psi}
 \allowdisplaybreaks
 
\begin{document}

\title{\Large \bf
Existence of conformal metrics with constant scalar curvature and constant boundary mean curvature on compact manifolds}

\author{Xuezhang  Chen\thanks{X. Chen: xuezhangchen@nju.edu.cn;  $^\dag$L. Sun: lsun@math.jhu.edu.}~ and Liming Sun$^\dag$\\
 \small
$^\ast$Department of Mathematics \& IMS, Nanjing University, Nanjing
210093, P. R. China\\
\small
$^\dag$Department of Mathematics, Rutgers University, \\
\small
110 Frenlinghuysen Road, Piscataway NJ 08854, USA\\
\small 
$^\dag$Department of Mathematics, Johns Hopkins University, Baltimore, Maryland 21218, USA
}

\date{}

\maketitle

\begin{abstract}
We study the problem of deforming a Riemannian metric to a conformal one with nonzero constant scalar curvature and nonzero constant boundary mean curvature on a compact manifold of dimension $n\geq 3$. We prove the existence of such conformal metrics in the cases of $n=6,7$ or the manifold is spin and some other remaining ones left by Escobar. Furthermore, in the positive Yamabe constant case, by normalizing the scalar curvature to be $1$, there exists a sequence of conformal metrics such that their constant boundary mean curvatures go to $+\infty$.

{{\bf $\mathbf{2010}$ MSC:} Primary 53C21, 35J65; Secondary 58J05,35J20.}

{{\bf Keywords:} Scalar curvature, mean curvature, manifold with boundary, critical exponent, positive conformal invariant.}
\end{abstract}

%\listoftodos

\section{Introduction}
Analogous to the Yamabe problem, a very natural question on a compact manifold with boundary is, for dimension $n \geq 3$, whether it is possible to deform any Riemannian metric  to a conformal one with constant scalar curvature and constant boundary mean curvature. When studying the above problem, we benefited much from the series of papers on the Yamabe problem by Yamabe, Trudinger, Aubin and Schoen. Readers are referred to Lee and Park \cite{Lee-Parker}, Aubin \cite{aubin_book}  for a survey on the Yamabe problem, see also Bahri and Brezis \cite{Bahri-Brezis}, Bahri \cite{Bahri}  for the works on this problem and related ones.

Let $(M,g_0)$ be a smooth compact Riemannian manifold of dimension $n\geq 3$  with boundary $\partial M$. The problem is equivalent to finding a positive solution to the following PDE:
\begin{align}\label{prob:smc-2constants}
\left\{\begin{array}{ll}
\displaystyle-\frac{4(n-1)}{n-2}\Delta_{g_0}u+R_{g_0}u=c_1 u^{\frac{n+2}{n-2}},&\hspace{2mm}\mbox{ in }M,\\
\displaystyle\frac{2}{n-2}\frac{\partial u}{\partial \nu_{g_0}}+h_{g_0}u=c_2 u^{\frac{n}{n-2}},&\hspace{2mm}\mbox{ on }\partial M,
\end{array}
\right.
\end{align}
where $c_1,c_2\in \mathbb{R}$, $R_{g_0}$ denotes the scalar curvature, $\nu_{g_0}$ denotes the outward unit normal on $\pa M$ and $h_{g_0}$ denotes  the mean curvature of $\pa M$ with respect to $\nu_{g_0}$ (balls in $\mathbb{R}^n$ have positive boundary mean curvature). When $c_1=0$ and $c_2 \in \mathbb{R}$, we refer the scalar-flat metrics of constant mean curvature problem to \cite{escobar1,marques1,marques2,Brendle1,ChenSophie,almaraz5,almaraz1,ChenHo}. When $c_1 \in \mathbb{R}, c_2=0$, we refer the Yamabe problem with minimal boundary to \cite{escobar4,Brendle-Chen,Brendle1,ChenHo,Almaraz-Sun}. When $c_1,c_2\neq 0$, problem \eqref{prob:smc-2constants} is also called constant scalar curvature and constant mean curvature problem. Escobar initiated the investigation of this problem in \cite{escobar5, escobar3}.  In \cite{han-li1,han-li2}, Z. C. Han and Y. Y. Li proposed the following conjecture:
\begin{conjecture}[\textbf {Han-Li}] \textit{If $Y(M,\pa M)>0$, then problem \eqref{prob:smc-2constants} is solvable for any positive constant $c_1$ and any $c_2 \in \mathbb{R}$.}
\end{conjecture}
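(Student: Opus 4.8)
The plan is to run the Yamabe-type variational scheme, adapted to manifolds with boundary by Escobar. Since $Y(M,\pa M)>0$, after a conformal change of $g_0$ (take the first eigenfunction of the conformal Laplacian with the Robin boundary condition) we may assume $R_{g_0}>0$ and $h_{g_0}=0$, so that
\[
\mathcal{E}(u)=\im\lp\ct|\nabla_{g_0}u|^2+R_{g_0}u^2\rp\dv_{g_0}
\]
is positive and coercive on $H^1(M)$. Fix $c_1=1$. For a weight $a\ge0$ introduce the scale-invariant quotient
\[
Q_a(u)=\frac{\mathcal{E}(u)}{\lp\im u^{\crit}\dv_{g_0}\rp^{\frac{n-2}{n}}+a\lp\idm u^{\critbordo}\,\ds_{g_0}\rp^{\frac{n-2}{n-1}}},\qquad 0\ne u\ge 0,
\]
and let $\mu_a=\inf Q_a$. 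A positive smooth minimizer $u_a$, once produced, is after a rescaling a solution of \eqref{prob:smc-2constants} with $c_1=1$ and $c_2=c_2(a)\ge0$ read off from $\mu_a$, $a$ and $\idm u_a^{\critbordo}\ds_{g_0}$; the endpoint $a=0$ is the $c_2=0$ problem and $a\to\infty$ degenerates to Escobar's scalar-flat constant-mean-curvature problem ($c_1\to0^+$). The scheme is: (i) prove the strict inequality $\mu_a<\Lambda_a$, where $\Lambda_a$ is the non-compactness threshold, an explicit constant with $\Lambda_a\le\Q<\Y$; (ii) deduce existence, regularity and positivity of $u_a$; (iii) track $c_2(a)$ over all $c_2\ge0$ and treat $c_2<0$ separately.

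For (i): a minimizing sequence for $\mu_a$ either converges or concentrates at a point; interior concentration, modelled on the $\R^n$ bubble, costs $\Y$ (a concentrating bubble contributes exponentially little to $\idm u^{\critbordo}$), whereas boundary concentration, modelled on the half-space bubbles $U_\e(x)=\e^{(n-2)/2}\big(\Zeo\big)^{-(n-2)/2}$, costs $\Lambda_a$, an elementary non-increasing function of $a$ with $\Lambda_0=2^{-2/n}\Y$; so the threshold is $\Lambda_a<\Y$, and it suffices to beat it with one test function. Insert $\w\,U_\e$ ($\w$ a smooth cut-off) in conformal Fermi coordinates centred at a boundary point $x_0$ and expand in $\e$. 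If $\pa M$ is not umbilic at $x_0$, or if $g_0$ is not locally conformally flat and one blows up at a suitable interior point (so the Weyl tensor enters), the local geometry produces a strictly negative correction at a finite order of $\e$, hence $\mu_a<\Lambda_a$; this settles all manifolds away from the critical situation in the dimension ranges where the local term dominates. In the critical situation --- $g_0$ locally conformally flat with totally umbilic boundary, and the low-dimensional cases $n\le5$ --- the local terms cancel and the leading correction is \emph{global}: up to a positive dimensional constant it equals the ``mass'' in the expansion at $x_0$ of the Green's function $\Gx$ of $-\ct\D_{g_0}+R_{g_0}$ with $\tfrac{2}{n-2}\pa_\nu\Gx+h_{g_0}\Gx=0$ on $\pa M$. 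Since $\Gx^{4/(n-2)}g_0$ makes $M\setminus\{x_0\}$ an asymptotically flat manifold with minimal (or empty) boundary and nonnegative scalar curvature, the positive mass theorem forces this mass to be positive; combined with Schoen's device of patching $\Gx$ far from $x_0$ onto $U_\e$ near $x_0$, this gives the strict inequality. The positive mass theorem is available for $3\le n\le7$ (Schoen-Yau) and for spin manifolds in all dimensions --- which is exactly why the cases $n=6,7$ or spin (together with the few remaining gaps in Escobar's arguments that this plugs) can now be closed, while the general conjecture stays tethered to the general positive mass theorem.

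For (ii)--(iii): once $\mu_a<\Lambda_a$, a standard concentration-compactness argument --- or first solving the subcritical problems $-\ct\D u+R_{g_0}u=u^{p}$ on $M$, $\tfrac{2}{n-2}\pa_\nu u+h_{g_0}u=\l u^{q}$ on $\pa M$ with $p\uparrow\frac{n+2}{n-2}$, $q\uparrow\frac{n}{n-2}$, then passing to the limit with the uniform energy bound --- yields a positive $u_a\in C^\infty(M)$ (positivity by the strong maximum principle). One has $c_1(a)=\mu_a(\im u_a^{\crit})^{-2/n}>0$ for all $a\ge0$ and $c_2(a)\ge0$ with $c_2(0)=0$; moreover, in the normalization $\im u_a^{\crit}=1$ one shows $a\mu_a\to Q_\infty$, the positive boundary Yamabe invariant of $(M,g_0)$, so that $c_1(a)=\mu_a\to0$ while $c_2(a)$ tends to a positive constant, whence after renormalizing to $c_1\equiv1$ the constant boundary mean curvature satisfies $c_2\to+\infty$ as $a\to\infty$. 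By continuity of $a\mapsto c_2(a)$ and the intermediate value theorem, every $c_2\ge0$ is realized --- in particular a sequence of unit-scalar-curvature conformal metrics with boundary mean curvature tending to $+\infty$. For $c_2<0$, starting from the $c_2=0$ solution one expects a softer argument to suffice (sub-/super-solution, or continuation in $c_2$), the regime already essentially contained in the earlier literature. \textbf{The main obstacle} is the critical case of (i): identifying the global correction term with a genuine ADM-type mass of an asymptotically flat manifold \emph{with} boundary, arranging the boundary to be minimal so that the boundary positive mass theorem applies, controlling all $O(\e^{n-2})$ (and, for even $n$, logarithmic) remainders in Schoen's local-to-global interpolation, and --- ultimately --- the positive mass theorem itself in arbitrary dimension, which is precisely where this method stops short of the full Han-Li conjecture.
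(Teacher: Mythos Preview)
The statement is a \emph{conjecture}; the paper does not prove it and presents it as open. What the paper establishes are partial results---Theorems~\ref{Thm:main} and~\ref{thm:explicit_assumptions}---under additional hypotheses, essentially those under which the positive-mass-type theorem with non-compact boundary is currently known ($3\le n\le 7$, or $M$ spin), together with the case $\partial M\setminus\mathcal{Z}\neq\emptyset$ where local geometry already suffices. Your outline follows the same variational route the paper takes for those partial results, and you correctly identify the positive mass theorem as the obstruction to the full conjecture; in that sense your diagnosis of ``where this method stops short'' is accurate.

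That said, two steps in your sketch would not go through as written, even for the partial results. First, the suggestion to ``blow up at a suitable interior point'' when the Weyl tensor is nonzero cannot help: interior bubbles have energy approaching $Y(S^n)$, which lies strictly \emph{above} the boundary threshold $Y_{a,b}(S^n_+,S^{n-1})$ you must beat, so an interior test function never yields the required strict inequality. The paper's test functions are always centred on $\partial M$; the Weyl tensor enters only through its behaviour near the boundary (this is exactly what the set $\mathcal{Z}$ encodes), and for $n\ge 6$ the naive cut-off bubble $\chi\,W_\e$ is not sharp enough---one needs the refined test function~\eqref{eq:test_fcn} with the correction $\psi$ coming from the linearized scalar- and mean-curvature equations (Proposition~\ref{prop:linearized_eqs_smc}) to make the error terms close. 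Second, even granting positive mass in all dimensions, your scheme does not yet deliver \emph{every} $c_2\in\mathbb{R}$: the continuity of $a\mapsto c_2(a)$ presupposes a selection argument for minimizers that you have not supplied (the paper proves only compactness of the minimizer set, Theorem~\ref{thm:compactness_minimizers}, and concludes merely that the achieved mean curvatures form a ``large set'' in $\mathbb{R}_+$), and the case $c_2\le 0$ is not reached by this functional at all, since $a,b>0$ forces $c_2>0$.
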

They proved that the conjecture is true when one of the following assumptions is fulfilled:
\begin{enumerate}
\item[(a)]$n \geq 5$ and $\pa M$ admits at least one non-umbilic point (see \cite{han-li1});
\item[(b)]$n\geq 3$ and $(M,g_0)$ is locally conformally flat with umbilic boundary $\pa M$ (see \cite{han-li2}).
\end{enumerate}

Before presenting our results, we need to introduce natural conformal invariants. The (generalized) Yamabe constant $Y(M,\partial M)$ is defined as
\begin{equation}\label{Yamabe_constant}
Y(M,\partial M):=\inf_{g\in[g_0]}\frac{\int_MR_gd\mu_g+2(n-1)\int_{\partial M}h_gd\sigma_g}{(\int_{M}d\mu_g)^{\frac{n-2}{n}}}.
\end{equation}
Similarly, we define (see \cite{escobar1})
\begin{equation*}
Q(M,\partial M):=\inf_{g\in[g_0]}\frac{\int_MR_gd\mu_g+2(n-1)\int_{\partial M}h_gd\sigma_g}{(\int_{\pa M} d\sigma_g)^{\frac{n-2}{n-1}}}.
\end{equation*}
If $Y(M,\pa M)>(=) 0$, then there exists a conformal metric of $g_0$ with zero scalar curvature in $M$ and positive (zero) mean curvature on $\pa M$.\footnote{Since $Y(M,\pa M)>(=) 0$, it follows from \cite[Lemma 1.1]{escobar4} that there exists $g_1 \in [g_0]$ such that $R_{g_1}>(=) 0$ and $h_{g_1}=0$. Let $\varphi$ be a smooth positive minimizer of $\{\int_M(\frac{4(n-1)}{n-2}|\nabla \psi|_{g_1}^2+R_{g_1}\psi^2)d\mu_{g_1}; \psi\in H^1(M,g_1),\int_{\pa M}\psi^2 d\sigma_{g_1}=1\}$, then $\varphi^{4/(n-2)}g_1$ is the desired conformal metric.} In fact, $Y(M,\pa M)>0$ if and only if $Q(M,\pa M)>0$. However, it was first pointed out by Zhiren Jin (see \cite{escobar2}) that $Q(M,\partial M)$ could be $-\infty$,  meanwhile $Y(M,\partial M)>-\infty$. 

We remark that problem \eqref{prob:smc-2constants} is variational. The total scalar curvature plus total mean curvature functional is given by
\begin{equation}\label{smc_energy}
E[u]=\int_M (\tfrac{4(n-1)}{n-2}|\nabla u|_{g_0}^2 +R_{g_0}u^2) d\mu_{g_0}+2(n-1)\int_{\partial M}h_{g_0}u^2 d\sigma_{g_0}.
\end{equation}
Given any $a,b>0$, we define a conformal invariant by
\begin{align*}
Y_{a,b}(M,\pa M)=&\inf_{g\in[g_0]}\frac{\int_{M}R_gd\mu_g+2(n-1)\int_{\partial M}h_gd\sigma_g}{a\left(\int_{M}d\mu_g\right)^{\frac{n-2}{n}}+2(n-1)b\left(\int_{\partial M}d\sigma_g\right)^{\frac{n-2}{n-1}}}\\
=&\inf_{0\not \equiv u\in H^1(M,g_0)}\mathcal{Q}_{a,b}[u],
\end{align*}
where
$$\mathcal{Q}_{a,b}[u]=\frac{E[u]}{a\left(\int_{M}|u|^{\frac{2n}{n-2}}d\mu_{g_0}\right)^{\frac{n-2}{n}}+2(n-1)b\left(\int_{\partial M}|u|^{\frac{2(n-1)}{n-2}}d\sigma_{g_0}\right)^{\frac{n-2}{n-1}}}.$$
The next theorem gives a criterion for the existence of a minimizer for $Y_{a,b}(M,\pa M)$.

\begin{theorem}\label{Thm:minimizers}
Suppose $Y_{a,b}(M,\pa M)< Y_{a,b}(S^n_+,S^{n-1})$ for some given $a,b>0$, then $Y_{a,b}(M,\pa M)$ can be achieved by a smooth positive minimizer.
\end{theorem}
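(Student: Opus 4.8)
The plan is to use the direct method of the calculus of variations, where the strict inequality $Y_{a,b}(M,\partial M) < Y_{a,b}(S^n_+, S^{n-1})$ is exactly what rules out the loss of compactness at the critical Sobolev and trace exponents. First I would fix a minimizing sequence $u_\nu \in H^1(M,g_0)$ for $\mathcal{Q}_{a,b}$; by homogeneity of $\mathcal{Q}_{a,b}$ we may normalize, say, so that $a(\int_M |u_\nu|^{2n/(n-2)}\,d\mu_{g_0})^{(n-2)/n} + 2(n-1)b(\int_{\partial M}|u_\nu|^{2(n-1)/(n-2)}\,d\sigma_{g_0})^{(n-2)/(n-1)} = 1$. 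Since $Y(M,\partial M)>0$ (which follows from $Y_{a,b}(M,\partial M)>0$, itself a consequence of the strict inequality since $Y_{a,b}(S^n_+,S^{n-1})>0$), the functional $E$ is coercive on $H^1$ after this normalization, so $\{u_\nu\}$ is bounded in $H^1(M,g_0)$. Passing to a subsequence, $u_\nu \rightharpoonup u$ weakly in $H^1$, strongly in $L^p(M)$ for $p<2n/(n-2)$ and in $L^q(\partial M)$ for $q<2(n-1)/(n-2)$, and a.e.; replacing $u_\nu$ by $|u_\nu|$ we may assume $u\geq 0$.

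The heart of the matter is a concentration-compactness dichotomy. I would apply the second concentration-compactness lemma of Lions, adapted to the manifold-with-boundary setting (with both an interior bubble measure supported in the interior of $M$ and a boundary bubble measure supported on $\partial M$), to the sequence $|u_\nu - u|$. This produces at most countably many atoms; the key point is that the best constants governing each possible concentration point are controlled by the Sobolev/trace constants on the half-space, i.e. by $Y_{a,b}(S^n_+, S^{n-1})$ (interior concentration is governed by the pure Sobolev constant on $\mathbb{R}^n$, boundary concentration by the half-space Sobolev--trace constant; both are $\geq Y_{a,b}(S^n_+,S^{n-1})$ in the relevant normalized comparison). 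Writing the Brezis--Lieb splitting for both the interior $L^{2n/(n-2)}$ norm and the boundary $L^{2(n-1)/(n-2)}$ norm, together with the weak-convergence splitting of $E$, one gets $1 = Y_{a,b}(M,\partial M)^{-1} E[u] + (\text{concentration terms})$ and a matching lower bound on the concentration terms in terms of $Y_{a,b}(S^n_+,S^{n-1})^{-1}$ times their mass. The subadditivity of $t\mapsto t^{(n-2)/n}$ and $t\mapsto t^{(n-2)/(n-1)}$ then forces, via the strict inequality hypothesis, that all concentration masses vanish and that $u\not\equiv 0$; hence $u_\nu \to u$ strongly in $L^{2n/(n-2)}(M)$ and $L^{2(n-1)/(n-2)}(\partial M)$, and by weak lower semicontinuity of $E$, $u$ is a minimizer. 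The step I expect to be the genuine obstacle is setting up this concentration-compactness analysis cleanly near the boundary: one must track interior bubbles, boundary bubbles, and the possibility of interior bubbles sliding to the boundary, and verify in each case that the relevant sharp constant is no smaller than $Y_{a,b}(S^n_+,S^{n-1})$ — this requires the sharp Sobolev and Sobolev--trace inequalities on $\mathbb{R}^n_+$ and a careful rescaling argument at each putative concentration point.

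Once a nonnegative minimizer $u$ is obtained, the remaining steps are standard. The Euler--Lagrange equations for $\mathcal{Q}_{a,b}$ show that (after rescaling) $u$ solves a system of the form \eqref{prob:smc-2constants} with $c_1 = a\,Y_{a,b}\,(\int_M u^{2n/(n-2)})^{-2/n}\geq 0$ and $c_2 = b\,Y_{a,b}\,(\int_{\partial M} u^{2(n-1)/(n-2)})^{-1/(n-1)}\geq 0$; since $Y_{a,b}(M,\partial M)>0$ both constants are positive. Regularity theory for this critical elliptic system with critical nonlinear Neumann boundary condition (Moser iteration for the $L^\infty$ bound, then Schauder estimates) upgrades $u$ to a smooth solution. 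Finally, the strong maximum principle and the Hopf boundary lemma applied to the equation $-\frac{4(n-1)}{n-2}\Delta_{g_0}u + R_{g_0}u = c_1 u^{(n+2)/(n-2)} \geq 0$ (using $u\geq 0$, $u\not\equiv 0$) give $u>0$ throughout $\overline{M}$, completing the proof.
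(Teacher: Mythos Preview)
Your approach is correct in spirit but differs from the paper's, and there is one genuine slip. You claim that the strict inequality $Y_{a,b}(M,\partial M)<Y_{a,b}(S^n_+,S^{n-1})$ forces $Y_{a,b}(M,\partial M)>0$; it does not, so your coercivity step is not justified as written. The paper sidesteps this by treating the case $Y(M,\partial M)<0$ separately via \cite{ChenHoSun} and then working under $Y(M,\partial M)\geq 0$, where $E[u]\geq 0$ for all $u$; you would need to do the same, or supply a direct argument in the negative case.

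Methodologically, the paper does \emph{not} run Lions-type concentration-compactness on a critical minimizing sequence. Instead it uses subcritical approximation: for each $1<q<\tfrac{n+2}{n-2}$ one minimizes the subcritical quotient $\mathcal{Q}_{a,b}^q$ (compactness is soft there), obtaining smooth positive minimizers $u_q$ solving the subcritical Euler--Lagrange system. One shows $\mu_q\to Y_{a,b}(M,\partial M)$ and then passes to the limit $q\nearrow\tfrac{n+2}{n-2}$. The strict inequality enters through a single global Sobolev--trace inequality on $M$ (Lemma~\ref{lem:Sobolev_trace_mfld}\,(iii)), which yields a uniform lower bound $\int_M u_q^2\,d\mu_{g_0}\geq C_1>0$; no bubble dichotomy or atom-counting is needed. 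A separate short argument, using the Hopf lemma on a limiting equation with Neumann boundary condition, rules out the degeneration $\int_{\partial M}u_q^{(q+3)/2}\to 0$ and shows strong $H^1$-convergence to a positive minimizer.

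Your direct route via concentration-compactness is also viable, and you correctly identify its delicate point: at an interior concentration point the governing constant is the full-space Sobolev constant (equivalently $2^{2/n}Y_{a,0}(\mathbb{R}^n_+,\mathbb{R}^{n-1})$), while at a boundary point it is $Y_{a,b}(\mathbb{R}^n_+,\mathbb{R}^{n-1})$; monotonicity of $Y_{a,b}$ in $b$ gives $Y_{a,0}\geq Y_{a,b}$, so both exceed the threshold. The trade-off is that the subcritical scheme avoids the measure-theoretic machinery entirely and replaces the Brezis--Lieb/atom analysis by one clean inequality plus a Hopf-lemma step, whereas your approach, once set up, is more flexible and would adapt with little change to related functionals.
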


In \cite{Araujo1,Araujo2} Araujo also gave some characterization of critical points (including minimizers) of $E[u]$ under Escobar's non-homogeneous constraint (see \cite{escobar3}).

In order to apply Theorem \ref{Thm:minimizers} in the case of $Y(M,\pa M)>0$, we need to construct a global test function $\ubar$ as a small perturbation of a bubble function $\U$ with $x_0 \in \pa M$ and small $\e>0$, such that $\mathcal{Q}_{a,b}[\ubar]< Y_{a,b}(S_+^n,S^{n-1})$. We would like to mention some developments on the technique of constructing test functions in very closely related works. In dimension $n\geq 6$, Brendle \cite{Brendle2} initiated this technique of constructing test functions through his study of the Yamabe flow. Subsequently Brendle and S. Chen \cite{Brendle-Chen} developed it to study the Yamabe problem with umbilic minimal boundary (i.e. $c_1\in \mathbb{R},c_2=0$ and umbilc boundary). Not long after that S. Chen \cite{ChenSophie} adapted the same technique to scalar-flat and constant mean curvature problem with umbilic boundary (i.e. $c_1=0,c_2\in \mathbb{R}$ and umbilc boundary). One of the key ingredients in Almaraz \cite{almaraz5} and Almaraz-L. Sun \cite{Almaraz-Sun} is to extend such a technique to the case of the boundary $\pa M$ having at least one non-umbilic point and the case of lower dimension $3 \leq n \leq 5$. The correction term $\w$ in our test function (see \eqref{eq:test_fcn}) comes from the linearized equations of scalar curvature and mean curvature at a round metric on a spherical cap, which has constant sectional curvature $4$ (see Proposition \ref{prop:linearized_eqs_smc}).

For $n \geq 3$, let $\mathbb{R}_+^n=\{y=(y^1,\cdots,y^n) \in \mathbb{R}^n; y^n>0\}$ denote the half Euclidean space. We shall use a notion of a {\it mass} associated to manifolds with boundary.
\begin{definition}\label{def:asym}
Let $(N, g)$ be a Riemannian manifold with non-compact boundary $\d N$. 
We say that $N$ is {\it{asymptotically flat}} with order $p>0$, if there exist a compact set $N_0\subset N$ and a diffeomorphism $F: N\backslash N_0\to \Rn\backslash \overline{B^+_1(0)}$ such that, in the coordinate chart defined by $F$ (called {\it  asymptotic coordinates} of $N$), there holds
$$
|g_{ij}(y)-\delta_{ij}|+|y||\pa_k g_{ij}(y)|+|y|^2|\pa^2_{kl}g_{ij}(y)|=O(|y|^{-p}),\hbox{~~as~~}|y|\to\infty,
$$
where $i,j,k,l=1,\cdots,n, B_1^+(0)=B_1(0)\cap \mathbb{R}_+^n$.
\end{definition}

If $R_g, h_g$ are integrable on $N$ and $\pa N$ respectively, and the decay order of $(N,g)$ is $p>(n-2)/2$, then the following limit
\begin{align*}
m(g):=\lim_{R\to\infty}\left[
\int\limits_{\{y\in\Rn;\, |y|=R\}}\sum_{i,j=1}^{n}(g_{ij,j}-g_{jj,i})\frac{y^i}{|y|}\,d\sigma
+\int\limits_{\{y\in\mathbb{R}^{n-1};\, |y|=R\}}\sum_{a=1}^{n-1}g_{na}\frac{y^a}{|y|}\,d\sigma\right]
\end{align*}
exists, and we call it the {\it{mass}} of $(N, g)$. Moreover, $m(g)$ is a geometric invariant in the sense that it is independent of asymptotic coordinates. The definition of the mass $m(g)$ was first proposed by Marques. The following positive mass type conjecture was initially given in \cite{almaraz5} and has been verified in \cite[Theorem 1.3]{almaraz-barbosa-lima} under the hypotheses that either $3 \leq n \leq 7$ or if $n\geq 3$ and $N$ is spin.

\begin{conjecture}[\textbf{Positive mass with a non-compact boundary}]
If $(N,g)$ is asymptotically flat with decay order of $p>(n-2)/2$ and $R_g, h_g\geq 0$, then we have $m(g)\geq 0$ and the equality holds if and only if $N$ is isometric to $\Rn$.
\end{conjecture}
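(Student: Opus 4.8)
The plan is to establish the two asserted cases by the two classical routes to the positive mass theorem, each adapted to the non-compact boundary and the mean-curvature hypothesis.

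\emph{The spin case ($n\ge 3$, $N$ spin).} Following Witten's argument as adapted to manifolds with boundary, I would work with the spinor bundle of $(N,g)$ and the Dirac operator $D$, imposing on $\partial N$ a chiral (MIT--bag type) boundary condition, which is the natural elliptic self-adjoint boundary condition whose contribution to the Lichnerowicz--Weitzenb\"ock integral is controlled by $h_g$. For each spinor $\phi_0$ that is parallel for the flat background, I would solve $D\phi=0$ in $N$ under the chiral condition on $\partial N$ with $\phi-\phi_0\to 0$ at infinity; the decay order $p>(n-2)/2$ is exactly what makes this boundary value problem solvable in the appropriate weighted spaces (a Weitzenb\"ock argument shows the relevant operator has trivial kernel). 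Integrating $0=\int_N\langle D^2\phi,\phi\rangle\,d\mu_g$, using $D^2=\nabla^*\nabla+\tfrac14R_g$, and reading off the boundary-at-infinity term from the asymptotic expansion of $g$ yields
\[
c_n\,m(g)\,|\phi_0|^2=\int_N\Big(|\nabla\phi|^2+\tfrac14R_g|\phi|^2\Big)\,d\mu_g+\int_{\partial N}\Big(\text{nonnegative terms}+c_n'\,h_g|\phi|^2\Big)\,d\sigma_g
\]
for positive dimensional constants $c_n,c_n'$, the key point being that the chiral boundary condition collapses the a priori indefinite boundary integrand into a nonnegative expression plus a nonnegative multiple of $h_g|\phi|^2$. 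Hence $R_g\ge 0$ and $h_g\ge 0$ force $m(g)\ge 0$. If $m(g)=0$, then for every $\phi_0$ the solution satisfies $\nabla\phi\equiv 0$, so $(N,g)$ carries the maximal space of parallel spinors and is therefore flat; the vanishing of the boundary integral then forces $h_g\equiv 0$ and, through the boundary condition, the full second fundamental form of $\partial N$ to vanish, so $\partial N$ is totally geodesic and $N$ is isometric to $\Rn$.

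\emph{The case $3\le n\le 7$.} Here I would use the doubling construction: reflect $(N,g)$ across $\partial N$ to obtain a boundaryless manifold $\tilde N$ with a metric $\tilde g$ that is smooth away from the hypersurface $\Sigma:=\partial N\hookrightarrow\tilde N$ and only Lipschitz across $\Sigma$. The standard corner computation shows that the distributional scalar curvature of $\tilde g$ equals $R_g$ on each smooth side together with a nonnegative singular measure along $\Sigma$ proportional to $h_g\,d\sigma_g$; thus $\tilde g$ has nonnegative scalar curvature in the distributional sense, and the doubled asymptotic region is a single asymptotically flat end with $m(\tilde g)=2\,m(g)$. I would then invoke the positive mass theorem for asymptotically flat metrics with a mean-convex corner, valid in dimensions $\le 7$ either via Miao's smoothing of the corner into a metric of genuinely nonnegative scalar curvature or via a direct Schoen--Yau area-minimizing hypersurface argument for which the corner, being mean-convex from both sides, poses no obstruction; the borderline decay $p>(n-2)/2$ is handled by the usual conformal/harmonic-asymptotics density deformation that improves the decay while keeping $R_g,h_g\ge 0$ and changing $m(g)$ by at most $\varepsilon$. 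This gives $m(g)\ge 0$, and the rigidity part of the cornered positive mass theorem forces $\tilde g$ flat with vanishing corner measure, hence $h_g\equiv 0$ and each half flat, so $N$ is isometric to $\Rn$.

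\emph{Main obstacle.} In the spin case the crux is the analysis at $\partial N$: verifying that the chiral boundary condition is elliptic and self-adjoint, that it reduces the indefinite Weitzenb\"ock boundary term to $h_g|\phi|^2$ up to a nonnegative remainder, and---for rigidity---that equality yields both $h_g=0$ and the vanishing of the shape operator of $\partial N$. In the low-dimensional case the difficulty is the low regularity along $\Sigma$: making the corner scalar-curvature identity and the ensuing minimal-hypersurface (or smoothing) argument rigorous for a merely Lipschitz metric, while simultaneously coping with the sharp decay rate $p>(n-2)/2$, at which the mass is only just defined and the density deformation must be executed with care.
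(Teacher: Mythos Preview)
The paper contains no proof of this statement: it is explicitly stated as a \emph{Conjecture}, with the remark immediately preceding it that the cases $3\le n\le 7$ and ($n\ge 3$, $N$ spin) have been verified in \cite{almaraz-barbosa-lima}. There is therefore nothing in the paper to compare your attempt against; the conjecture is invoked as a black box in the proof of Theorem~\ref{thm:explicit_assumptions}.

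That said, a brief assessment of your outline on its own merits: the spin argument you sketch (Witten identity with a chiral/MIT-bag boundary condition so that the boundary Weitzenb\"ock term becomes $h_g|\phi|^2$ plus a nonnegative remainder) is indeed the route taken in \cite{almaraz-barbosa-lima} and earlier in \cite{Raulot}, and your description of the rigidity step is accurate. For $3\le n\le 7$, however, your doubling proposal has a genuine gap. The boundary $\partial N$ here is \emph{non-compact} and extends to infinity inside the asymptotic chart (the model end is $\mathbb{R}^n_+\setminus B_1^+$, with $\partial N$ asymptotic to $\mathbb{R}^{n-1}$). Miao's corner-smoothing theorem is stated for a \emph{compact} corner hypersurface, and the density/harmonic-asymptotics deformations you invoke are likewise formulated for boundaryless ends; pushing either through a Lipschitz singularity that reaches into the asymptotic region, while keeping control of both $R_g\ge 0$ and the corner measure $h_g\,d\sigma$ and of the mass, is precisely the new analytic content one would have to supply. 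The cited reference \cite{almaraz-barbosa-lima} avoids this by running a Schoen--Yau style argument with \emph{free-boundary} minimal hypersurfaces directly in $(N,g)$, where the mean-curvature hypothesis $h_g\ge 0$ enters through the free-boundary stability inequality rather than through a doubled corner.
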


For $n\geq 3$, let $d=[(n-2)/2]$. As in \cite{almaraz5}, we define 
\begin{align*}
\mathcal{Z}=
\{x_0\in\pa M;&\limsup_{x\to x_0}d_{g_0}(x,x_0)^{2-d}|W_{g_0}(x)|_{g_0}=0\text{ and }\\
&\limsup_{x\to x_0}d_{g_0}(x,x_0)^{1-d}|\mathring{\pi}_{g_0}(x)|_{g_0}=0\},
\end{align*}
where $W_{g_0}$ denotes the Weyl tensor of $M$, $\pi_{g_0}$ and $\mathring{\pi}_{g_0}$ denote the second fundamental form and its trace-free part by: for any $x \in \pa M$, let $X,Y \in T_x (\pa M)$, then
$$\pi_{g_0}(X,Y)=\langle\nabla_X \nu_{g_0},Y\rangle_{g_0} \quad\hbox{and}\quad \mathring{\pi}_{g_0}(X,Y) =\pi_{g_0}(X,Y)-h_{g_0}\langle X,Y\rangle_{g_0}.$$
Then $\mathcal{Z}$ only depends on the conformal structure of $g_0$, since $W_{g_0}$ and $\mathring{\pi}_{g_0}$ are both pointwise conformal invariants. In particular, $\mathcal{Z}=\pa M$ when $n=3$. 

For $x_0\in \pa M$, let $g_{x_0}\in [g_0]$ be the metric induced by the conformal Fermi coordinates around $x_0$ (see \cite{marques2}). Denote by $G_{x_0}$ the Green's function of conformal Laplacian of $g_{x_0}$ with pole at $x_0$, satisfying the boundary condition $\pa_{\nu_{g_{x_0}}} G_{x_0}-\frac{n-2}{2} h_{g_{x_0}}G_{x_0}=0$ on $\pa M \setminus \{x_0\}$ (see \eqref{def:Green_funct}). Let $\bar g_{x_0}=G_{x_0}^{4/(n-2)}g_{x_0}$. Now we are ready to state our main result.

\begin{theorem}\label{Thm:main} Let $(M,g_0)$ be a smooth compact Riemannian manifold of dimension $n\geq 3$ with boundary. Suppose that $M$ is not conformal to the standard hemisphere $S_+^n$. If $Y(M,\pa M)>0$, assume either $\partial M\backslash \mathcal{Z}\neq \emptyset$ or  $m(\bar{g}_{x_0})>0$ for some $x_0\in \mathcal{Z}$, then
$$Y_{a,b}(M,\pa M)< Y_{a,b}(S^n_+,S^{n-1}).$$
\end{theorem}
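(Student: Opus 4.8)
The plan is to construct an explicit test function $\ubar$ adapted to a point $x_0 \in \partial M$ and a small scaling parameter $\e>0$, then estimate $\mathcal{Q}_{a,b}[\ubar]$ and show that the leading correction to $Y_{a,b}(S^n_+,S^{n-1})$ is strictly negative. The starting point is the standard bubble $\U$ (the extremal for the sharp Sobolev–trace inequality on the half-space, pulled back to a spherical cap of constant sectional curvature $4$), plus a correction term $\w$ obtained by solving the linearized scalar-curvature and boundary-mean-curvature equations about the round metric on the cap (Proposition~\ref{prop:linearized_eqs_smc}). One first works in conformal Fermi coordinates around $x_0$, so that $\det g_{x_0} = 1 + O(|x|^N)$ along $\partial M$ to high order and the mean curvature of $\partial M$ vanishes to high order at $x_0$; this localizes the geometry and makes the expansion of $E[\ubar]$ and of the two $L^p$ denominators tractable. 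I would cut off $\U + \w$ at scale $\rho$ (a fixed small radius), glue in $\e^{(n-2)/2}$ times the Green's function $G_{x_0}$ outside, and keep track of the gluing error.

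The computation then splits into two regimes according to dimension and the local geometry at $x_0$, exactly paralleling the Brendle / Brendle–Chen / S.~Chen / Almaraz / Almaraz–Sun scheme referenced in the introduction. In the \emph{non-umbilic / low-dimensional} regime — which is what $\partial M \setminus \mathcal{Z} \neq \emptyset$ buys us — one picks $x_0 \in \partial M \setminus \mathcal{Z}$, so that either $|W_{g_0}(x_0)|$ or $|\mathring\pi_{g_0}(x_0)|$ is nonzero (or more precisely fails the $\mathcal{Z}$-vanishing condition), and the correction term $\w$ produces a negative contribution to $E[\ubar]$ of order $\e^{\min\{4, n-2\}}$ (with a logarithm in the borderline case), which dominates the positive error terms from the cutoff and from the denominators. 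For $n=6,7$ this is precisely where the hypothesis on dimension enters. In the \emph{locally-conformally-flat-like / umbilic} regime, where $x_0 \in \mathcal{Z}$, the Weyl and trace-free second fundamental form contributions vanish to the relevant order, the bubble-plus-Green's-function ansatz becomes nearly exact, and the decisive term in the expansion of $E[\ubar] - Y_{a,b}(S^n_+,S^{n-1})(\text{denominator})$ is a negative multiple of the mass $m(\bar g_{x_0})$ times $\e^{n-2}$; here the hypothesis $m(\bar g_{x_0})>0$ is exactly what is needed, and it is legitimate to invoke it since the positive mass conjecture with non-compact boundary holds in the stated cases. In both regimes one finishes by optimizing (or simply taking $\e \to 0$) to conclude $\mathcal{Q}_{a,b}[\ubar] < Y_{a,b}(S^n_+,S^{n-1})$, hence $Y_{a,b}(M,\partial M) < Y_{a,b}(S^n_+,S^{n-1})$.

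A preliminary step that cannot be skipped is to pin down $Y_{a,b}(S^n_+,S^{n-1})$ itself and its extremals: one shows the infimum on the hemisphere is attained by the standard bubbles and computes the sharp constant in terms of $a,b$ and the two sharp Sobolev-type constants (the Escobar/Beckner trace inequality and the Aubin–Talenti interior one). This is needed both as the target value and because the Euler–Lagrange system on $S^n_+$ identifies $\U$ and fixes the normalization of $\w$. The main obstacle, as in all these works, is the bookkeeping in the two-denominator setting: because $\mathcal{Q}_{a,b}$ has a sum of an interior $L^{2n/(n-2)}$ term and a boundary $L^{2(n-1)/(n-2)}$ term in the denominator, the Taylor expansion of the quotient is genuinely more delicate than in the one-constraint problems, and one must verify that the \emph{same} choice of $x_0$ and $\w$ makes the combined expansion negative — i.e. that the interior and boundary error terms do not conspire against the negative geometric/mass term. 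Controlling these cross terms uniformly in $a,b>0$, and handling the borderline dimensions where the negative term and the cutoff error are of the same order (so the cutoff radius $\rho$ and the parameter $\e$ must be chosen in tandem), is the technical heart of the argument.
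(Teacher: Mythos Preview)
Your proposal is correct and follows essentially the same approach as the paper: the test function is exactly $\ubar=\chi_\rho(\U+\w)+(1-\chi_\rho)\e^{(n-2)/2}G_{x_0}$ in conformal Fermi coordinates, with $\w$ built from the vector field solving the conformal Killing equation (Proposition~\ref{prop:linearized_eqs_smc}), and the argument bifurcates on whether $x_0\in\mathcal{Z}$ or not. The paper packages everything into the single key energy estimate (Proposition~\ref{prop:energy_est}), where the Weyl/trace-free second fundamental form terms and the flux integral $\mathcal{I}(x_0,\rho)$ (which limits to the mass $m(\bar g_{x_0})$ for $x_0\in\mathcal{Z}$) appear simultaneously; the two cases of the theorem then follow immediately by fixing $\rho$ small and sending $\e\to 0$.
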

We should point out that such assumptions on compact manifolds in Theorem \ref{Thm:main} (or with some minor modifications) have been used in some closely related problems. For instance, Brendle \cite{Brendle2} for the Yamabe flow in dimension $n\geq 6$, S. Chen \cite{ChenSophie} and Almaraz \cite{almaraz5} for $c_1=0, c_2 \in \mathbb{R}$, Brendle-Chen \cite{Brendle-Chen} and Almaraz-L. Sun \cite{Almaraz-Sun} for $c_1 \in \mathbb{R}, c_2=0$.

Recent advances in the above positive mass type theorem (see \cite{almaraz-barbosa-lima,almaraz5,Raulot} etc.) have played an important role in such prescribed conformal curvature problems. As a direct consequence of Theorem \ref{Thm:main} and the positive mass type theorem recently proved in \cite{almaraz-barbosa-lima}, we obtain

\begin{theorem}\label{thm:explicit_assumptions}
Let $(M,g_0)$ be a smooth compact Riemannian manifold of dimension $n\geq 3$ with boundary. Suppose that $M$ is not conformal to the standard hemisphere $S_+^n$ and $Y(M,\pa M)>0$.  Assume that one of the following assumptions is satisfied:
\begin{enumerate}
\item[(i)] $\pa M \setminus \mathcal{Z} \neq \emptyset$;
\item[(ii)] $3 \leq n \leq 7$ or $M$ is spin; 
\item[(iii)] $n\geq 8$ and $(M,g_0)$ is locally conformally flat with umbilic boundary $\pa M$.
\end{enumerate}
Then given any $a,b>0$, there exists at least one smooth positive minimizer $u_{a,b}$ for $Y_{a,b}(M,\pa M)$. Moreover, the conformal metric $u_{a,b}^{4/(n-2)}g_0$, modulo a positive constant multiple, has scalar curvature $1$ and some positive constant boundary mean curvature.
\end{theorem}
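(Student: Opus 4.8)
The idea is to deduce Theorem~\ref{thm:explicit_assumptions} from Theorem~\ref{Thm:main}, Theorem~\ref{Thm:minimizers}, and the positive mass theorem with a non-compact boundary; so the only thing to verify is that, under each of the hypotheses (i)--(iii), the assumption of Theorem~\ref{Thm:main} is met, i.e.\ that $\partial M\setminus\mathcal{Z}\neq\emptyset$ or $m(\bar g_{x_0})>0$ for some $x_0\in\mathcal{Z}$. Case (i) is this assumption verbatim. For (ii) and (iii) I would argue as follows: if $\partial M\setminus\mathcal{Z}=\emptyset$, then $\mathcal{Z}=\partial M$ (which is nonempty), so fix an arbitrary $x_0\in\partial M$ and consider the blow-up metric $\bar g_{x_0}=G_{x_0}^{4/(n-2)}g_{x_0}$ on $M\setminus\{x_0\}$. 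Since $G_{x_0}$ solves the conformal Laplacian equation in $M\setminus\{x_0\}$ with the boundary condition $\partial_{\nu_{g_{x_0}}}G_{x_0}-\frac{n-2}{2}h_{g_{x_0}}G_{x_0}=0$ on $\partial M\setminus\{x_0\}$, the metric $\bar g_{x_0}$ is scalar-flat in $M\setminus\{x_0\}$ and has vanishing mean curvature on $\partial M\setminus\{x_0\}$; in particular $R_{\bar g_{x_0}},h_{\bar g_{x_0}}\geq 0$ and are trivially integrable. Using the conformal Fermi coordinates at $x_0$ together with the expansion of $G_{x_0}$ near its pole (as in \cite{marques2,almaraz5}), the defining conditions of $\mathcal{Z}$ --- the Weyl tensor and the trace-free second fundamental form vanishing fast enough at $x_0$ --- are exactly what guarantees that $(M\setminus\{x_0\},\bar g_{x_0})$ is asymptotically flat with decay order $p>(n-2)/2$ in the sense of Definition~\ref{def:asym}, so that the mass $m(\bar g_{x_0})$ is well defined.

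Next I would invoke the positive mass theorem. Under (ii), i.e.\ $3\leq n\leq 7$ or $M$ spin, this is precisely \cite[Theorem 1.3]{almaraz-barbosa-lima} applied to $(M\setminus\{x_0\},\bar g_{x_0})$. Under (iii), i.e.\ $(M,g_0)$ locally conformally flat with umbilic boundary, I would first perform a conformal change making the boundary totally geodesic near $x_0$ (possible because umbilicity is conformally invariant and one may normalize the mean curvature to vanish), then reflect $(M\setminus\{x_0\},\bar g_{x_0})$ across its boundary to obtain a locally conformally flat, asymptotically flat manifold without boundary of nonnegative scalar curvature, whose ADM mass is a positive constant multiple of $m(\bar g_{x_0})$, and apply the Schoen--Yau positive mass theorem in the locally conformally flat case. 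In either situation $m(\bar g_{x_0})\geq 0$, with equality only if $(M\setminus\{x_0\},\bar g_{x_0})$ is isometric to $\mathbb{R}^n_+$; but re-attaching the point $x_0$ would then make $(M,g_0)$ conformal to $S^n_+$, contradicting the hypothesis. Hence $m(\bar g_{x_0})>0$, the assumption of Theorem~\ref{Thm:main} holds, and therefore $Y_{a,b}(M,\partial M)<Y_{a,b}(S^n_+,S^{n-1})$ for all $a,b>0$.

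With this strict inequality in hand, Theorem~\ref{Thm:minimizers} produces a smooth positive minimizer $u=u_{a,b}$ of $\mathcal{Q}_{a,b}$. Since $\mathcal{Q}_{a,b}$ is invariant under $u\mapsto tu$, I normalize so that $a\big(\int_M u^{\frac{2n}{n-2}}d\mu_{g_0}\big)^{\frac{n-2}{n}}+2(n-1)b\big(\int_{\partial M}u^{\frac{2(n-1)}{n-2}}d\sigma_{g_0}\big)^{\frac{n-2}{n-1}}=1$; then $E[u]=\Lambda:=Y_{a,b}(M,\partial M)$, which is positive because $Y(M,\partial M)>0$ forces $E$ to be positive on $H^1(M,g_0)\setminus\{0\}$. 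Writing the Euler--Lagrange equation of $\mathcal{Q}_{a,b}$ at $u$ (the Lagrange multiplier equals $\Lambda$ by the $2$-homogeneity of both numerator and denominator) and integrating by parts, one finds that $u$ solves \eqref{prob:smc-2constants} with
$$c_1=\Lambda\,a\Big(\int_M u^{\frac{2n}{n-2}}d\mu_{g_0}\Big)^{-\frac2n}>0,\qquad c_2=\Lambda\,b\Big(\int_{\partial M}u^{\frac{2(n-1)}{n-2}}d\sigma_{g_0}\Big)^{-\frac1{n-1}}>0.$$
Consequently $g=u^{4/(n-2)}g_0$ has constant scalar curvature $c_1$ and constant boundary mean curvature $c_2$, both positive, and the homothetic metric $c_1 g=(c_1^{(n-2)/4}u_{a,b})^{4/(n-2)}g_0$ has scalar curvature $1$ and constant boundary mean curvature $c_1^{-1/2}c_2>0$, which is the assertion.

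The analytic heart of the argument is Theorem~\ref{Thm:main} itself (the test-function construction with the correction term $\psi$), which here is used as a black box. Within the present deduction, the delicate point I expect to be the main obstacle is the first step: checking that the definition of $\mathcal{Z}$ is sharp enough for $\bar g_{x_0}$ to attain decay order $p>(n-2)/2$ --- so that the mass is defined and the positive mass theorem applies --- and, in case (iii), carrying out the reflection across an umbilic (equivalently, after the conformal normalization, totally geodesic) boundary so as to genuinely reduce to the Schoen--Yau theorem. Existence, regularity, and positivity of the minimizer have already been absorbed into Theorem~\ref{Thm:minimizers}, and the Euler--Lagrange computation is routine.
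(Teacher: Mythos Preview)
Your proposal is correct and follows essentially the same route the paper indicates: the paper states Theorem~\ref{thm:explicit_assumptions} as a direct consequence of Theorem~\ref{Thm:main} and the positive mass theorem of \cite{almaraz-barbosa-lima}, and in the remark thereafter points to \cite{almaraz5} and the appendix of \cite{escobar4} (the reflection argument you outline) to cover cases (ii) and (iii), while Proposition~\ref{propo19} is precisely the asymptotic-flatness verification you sketch for $x_0\in\mathcal{Z}$. Your Euler--Lagrange and rescaling computation matches equations~\eqref{eq:critical_exponent} and~\eqref{eq:normalized_mc} in the paper.
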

When $Y(M,\pa M)>0$, Escobar proved in \cite[Theorem 4.2]{escobar3} the existence of such conformal metrics in Theorem \ref{thm:explicit_assumptions} under one of the following hypotheses:
\begin{enumerate}
\item[(1)] $3 \leq n\leq 5$;
\item[(2)] $\pa M$ has at least a nonumbilic point; 
\item[(3)] $\pa M$ is umbilic and either $M$ is locally conformally flat or the Weyl tensor is not identically zero on $\pa M$.
\end{enumerate} 
Then we generalize the existence results to the cases including $n=6,7$ or $M$ is spin. Some remaining cases left by Escobar are the manifolds  that are not locally conformally flat and $\pa M$ is umbilic, and Weyl tensor vanishes identically on $\pa M$ and $n\geq 6$. Thus our Theorem \ref{thm:explicit_assumptions} also generalizes to this type of manifolds under the assumption $\pa M \backslash\mathcal{Z}\neq \emptyset$. We next prove the compactness of the minimizers for $Y_{a,b}(M, \pa M)$ when $(a,b)$ varies in a compact set $K$ of $\{(a,b); a\geq 0,b\geq 0\}\setminus\{(0,0)\}$. We denote by $\mathcal{M}_{a,b}$ the set of smooth positive minimizers of $Y_{a,b}(M,\pa M)$ with the normalization \eqref{normalization_minimizer}.
\begin{theorem}\label{thm:compactness_minimizers}
Let $K$ and $\mathcal{M}_{a,b}$ be defined above. Suppose $Y_{a,b}(M,\pa M)<Y_{a,b}(S^n_+,S^{n-1})$ for all $(a,b)\in K$, then there exists $C=C(K,g_0)$ such that 
$$C^{-1}\leq u_{a,b}\leq C ,\quad \|u_{a,b}\|_{C^2(M)}\leq C, \quad  \forall~~ u_{a,b} \in \cup_{(a,b) \in K}\mathcal{M}_{a,b}.$$
\end{theorem}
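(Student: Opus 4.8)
The plan is to argue by contradiction via a blow-up analysis, in the spirit of the compactness arguments for the Yamabe and boundary Yamabe problems. Suppose no such constant $C$ exists; then there is a sequence $(a_\nu,b_\nu)\in K$ and minimizers $u_\nu\in\mathcal{M}_{a_\nu,b_\nu}$ with $\|u_\nu\|_{C^2(M)}\to\infty$ (or with $\inf u_\nu\to 0$). By compactness of $K$ we may assume $(a_\nu,b_\nu)\to(a_\infty,b_\infty)\in K$, with $(a_\infty,b_\infty)\neq(0,0)$. Each $u_\nu$ solves the Euler--Lagrange system associated to $\mathcal{Q}_{a_\nu,b_\nu}$, which after the normalization \eqref{normalization_minimizer} is a problem of the form \eqref{prob:smc-2constants} with uniformly bounded coefficients $c_1^\nu,c_2^\nu$; the energy identity gives $E[u_\nu]=Y_{a_\nu,b_\nu}(M,\pa M)$, and by the strict inequality hypothesis together with continuity of $(a,b)\mapsto Y_{a,b}(S^n_+,S^{n-1})$ we have $\limsup_\nu Y_{a_\nu,b_\nu}(M,\pa M)<\liminf_\nu Y_{a_\nu,b_\nu}(S^n_+,S^{n-1})$.

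Next I would run the concentration-compactness dichotomy. If $\sup_M u_\nu$ stays bounded, elliptic estimates up to the boundary (interior Schauder plus boundary Schauder for the oblique/Neumann-type condition) upgrade the bound to $C^2$, and a Harनack inequality combined with the positive lower bound on the energy rules out $u_\nu\to 0$; this contradicts the assumption. So $M_\nu:=\max_M u_\nu\to\infty$. Let $p_\nu$ be a maximum point; after passing to a subsequence $p_\nu\to p_\infty\in M$. Rescale: set $\e_\nu=M_\nu^{-2/(n-2)}\to 0$ and, in suitable (conformal Fermi, if $p_\infty\in\pa M$) coordinates centered at $p_\nu$, define $v_\nu(y)=\e_\nu^{(n-2)/2}u_\nu(\e_\nu y)$. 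Standard arguments show $v_\nu$ converges in $C^2_{loc}$ to a positive solution $v_\infty$ of the limiting constant-curvature problem on either $\mathbb{R}^n$ (if $p_\infty\in\mathrm{int}\,M$) or $\mathbb{R}^n_+$ (if $p_\infty\in\pa M$); by the classification results of Caffarelli--Gidas--Spruck and Li--Zhu, $v_\infty$ is a standard bubble $\U$, so the energy lost in the bubble is exactly $Y_{a_\infty,b_\infty}(S^n_+,S^{n-1})$ in the boundary case (and the corresponding sphere energy in the interior case, which is even larger after accounting for the $a,b$ weights).

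The main obstacle — and the point requiring the most care — is the energy accounting that produces the contradiction. One must show that at least one full bubble's worth of energy concentrates, i.e. that $\liminf_\nu E[u_\nu]\geq Y_{a_\infty,b_\infty}(S^n_+,S^{n-1})$ (plus a nonnegative remainder from whatever weakly converges or further concentrates), which clashes with $E[u_\nu]=Y_{a_\nu,b_\nu}(M,\pa M)<Y_{a_\nu,b_\nu}(S^n_+,S^{n-1})$ in the limit. The subtlety is that the denominator of $\mathcal{Q}_{a,b}$ mixes an interior $L^{2n/(n-2)}$ term and a boundary $L^{2(n-1)/(n-2)}$ term, and these two norms do not add under bubble splitting the way a single norm would; one needs a Brezis--Lieb type decomposition adapted to this two-term denominator, carefully tracking how the weights $a_\nu,b_\nu$ interact with the interior-versus-boundary location of the concentration point, and ruling out the possibility that energy escapes to zero in both norms simultaneously (prevented by the positive lower bound on $Y_{a_\nu,b_\nu}$ and the normalization). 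Once the sharp energy lower bound is established, the strict inequality hypothesis closes the contradiction and yields the uniform $C^2$ bound, while the two-sided pointwise bound $C^{-1}\leq u_\nu\leq C$ follows from the $C^2$ bound together with the Harnack inequality and the fact that $u_\nu$ cannot vanish identically.
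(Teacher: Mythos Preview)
Your blow-up strategy matches the paper's, but there is one genuine gap and one unnecessary complication.

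\textbf{The gap.} You assert that after normalization the Euler--Lagrange system has ``uniformly bounded coefficients $c_1^\nu,c_2^\nu$''. This is not free: by \eqref{eq:critical_exponent} these coefficients are
\[
c_1^\nu = a_\nu\, Y_{a_\nu,b_\nu}(M,\pa M)\left(\int_M u_\nu^{\frac{2n}{n-2}}d\mu_{g_0}\right)^{-\frac{2}{n}},\qquad
c_2^\nu = b_\nu\, Y_{a_\nu,b_\nu}(M,\pa M)\left(\int_{\pa M} u_\nu^{\frac{2(n-1)}{n-2}}d\sigma_{g_0}\right)^{-\frac{1}{n-1}},
\]
and the normalization \eqref{normalization_minimizer} only bounds the integrals \emph{above}, not below. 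If either integral tended to zero along the sequence, the corresponding coefficient would blow up, your rescaled limit equation would degenerate, and the classification step would fail. The paper proves this lower bound separately (Lemma~\ref{lem:lbd_minimizers}): the strict inequality $Y_{a,b}(M,\pa M)<Y_{a,b}(\mathbb{R}^n_+,\mathbb{R}^{n-1})$ combined with the sharp Sobolev inequality of Lemma~\ref{lem:Sobolev_trace_mfld}(iii) forces $\int_M u_\nu^2\geq C>0$, and then a Hopf-lemma argument rules out $\int_{\pa M}u_\nu^{2(n-1)/(n-2)}\to 0$. You need to supply this before the blow-up analysis.

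\textbf{The overcomplication.} The Brezis--Lieb splitting for the two-term denominator that you flag as ``the main obstacle'' is not how the paper closes the argument, and it is not needed. Once the rescaled functions converge in $C^2_{\mathrm{loc}}$ to a positive solution $v$ of the limit problem on $\mathbb{R}^n_+$ (boundary case), Fatou's lemma alone gives
\[
\int_{\mathbb{R}^n_+}v^{\frac{2n}{n-2}}\le \liminf_\nu\int_M u_\nu^{\frac{2n}{n-2}},\qquad
\int_{\mathbb{R}^{n-1}}v^{\frac{2(n-1)}{n-2}}\le \liminf_\nu\int_{\pa M}u_\nu^{\frac{2(n-1)}{n-2}}.
\]
Testing the limit equation with $v$ and using that, by \cite[Theorem 3.3]{escobar5}, any positive solution of \eqref{eq:blowupsols_wbdry} actually achieves $Y_{a,b}(\mathbb{R}^n_+,\mathbb{R}^{n-1})$, one combines these two inequalities with the explicit form of $\tilde a,\tilde b$ (which involve the reciprocals of $\liminf$ of those same integrals) to read off $Y_{a,b}(M,\pa M)\ge Y_{a,b}(\mathbb{R}^n_+,\mathbb{R}^{n-1})$ directly. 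In the interior blow-up case the same Fatou argument produces instead $Y_{a,b}(M,\pa M)\ge 2^{2/n}Y_{a,0}(\mathbb{R}^n_+,\mathbb{R}^{n-1})$, which contradicts monotonicity $Y_{a,b}\le Y_{a,0}$. So no energy splitting is required: the contradiction comes from comparing the coefficients of the limit equation, not from accounting for energy lost to a bubble.
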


It follows from Theorem \ref{thm:compactness_minimizers} that in terms of normalized conformal metrics having scalar curvature $1$, there exits a sequence of such conformal metrics such that their constant boundary mean curvatures go to $+\infty$. We refer to the end of Section \ref{Sect4} for details. In contrast with our result, the constant mean curvature of such a conformal metric in \cite[Theorem 4.2]{escobar3} only admits a small real number. Indeed, the smallness of $b \in \mathbb{R}$ in a conformal invariant $G_{a,b}(M)$ (see also Section \ref{Sect2}) is very crucial in the proof of \cite{escobar3}.
\begin{remark}
When $Y(M,\pa M)<0$, as a direct consequence of \cite[Theorem 1.1]{ChenHoSun},
there exists a conformal metric such that its scalar curvature equals $-1$ and its boundary mean curvature equals any negative real number.
\end{remark}
\begin{remark}
The assumptions (ii) and (iii) enable us to use the above positive mass type theorem results of \cite{almaraz5} and the appendix of \cite{escobar4}. Due to similar technical reasons as in the study of the Yamabe flow in \cite{Brendle2}, these existence results are reduced to the validity of the above positive mass type conjecture in higher dimension $n \geq 8$.
\end{remark}

In a forthcoming paper \cite{ChenHoSun1} we will adopt a geometric flow with another Escobar's non-homogeneous constraint used in \cite{escobar3} to tackle problem \eqref{prob:smc-2constants}.  In general, such a geometric flow can be used to find some non-minimizer critical points of the associated functional.  More related conformal curvature flows can be found in \cite{Brendle4,Brendle2,Brendle1,ChenHo,ChenHoSun,almaraz5} and the references therein.

The present paper is organized as follows. In Section \ref{Sect2}, we describe some properties of the standard bubble on the boundary and conformal invariant $Y_{a,b}(S^n_+,S^{n-1})$. In section \ref{Sect3},  a procedure of subcritical approximations is set up to prove Theorem \ref{Thm:minimizers}. We present the compactness of  the set $\mathcal{M}_{a,b}$ with various $(a,b)\in K$ in Section \ref{Sect4}. In section \ref{Sect5}, we derive the detailed computations for the linearization of scalar curvature and mean curvature at a round metric on a spherical cap  in Section \ref{Subsect5.1}, which is of independent interest. Finally in Section \ref{Subsect5.2}, we construct these desired test functions required by Theorem \ref{Thm:minimizers} and establish its energy estimates, then Theorem \ref{Thm:main} follows.\\

\noindent{\bf Acknowledgements.} Both authors would like to thank Professor Yan Yan Li for his fruitful discussions and encouragement. This work was carried out during the first author's one-year visit to Rutgers University. He also would like to thank Mathematics Department at Rutgers University for its hospitality and financial support. He was supported by NSFC (Nos. 11771204 and 11201223), A Foundation for the Author of National Excellent Doctoral Dissertation of China (No.201417), Program for New Century Excellent Talents in University (NCET-13-0271), the travel grants from AMS Fan fund and Hwa Ying foundation at Nanjing University. We thank the referee for careful reading and for helpful mathematical comments which improved the exposition.

\section{Preliminaries}\label{Sect2}
Let $T_c$ be a negative real number, it follows from the classification theorem in \cite{li-zhu} that all nonnegative $C^2$ solutions to the following PDE
\begin{align}\label{prob:half-space}
\begin{cases}
\displaystyle-\Delta v=n(n-2) v^{\frac{n+2}{n-2}},\quad &\text{~~in~~}\R^n_+,\\
\displaystyle \frac{\pa v}{\pa y^n}=(n-2)T_c v^{\frac{n}{n-2}},\quad &\text{~~on~~} \R^{n-1},
\end{cases}
\end{align}
must be either $v\equiv 0$ or $v(y)=W_\e(y)$ (up to translations in the variables $y^1,\cdots,y^{n-1}$), where
\begin{equation}\label{eq:bdry_bubble}
\U(y)=\left(\frac{\e}{\e^2+|y-T_c\e \mathbf{e}_n|^2}\right)^{\frac{n-2}{2}}, ~\forall~\e>0.
\end{equation}
Here $\mathbf{e}_n$ is the unit direction vector in the $n$-th coordinate.
Also $W_\e$ are the extremal functions of  the best Sobolev constant  
$$Y_{a,b}(\mathbb{R}^n_+,\mathbb{R}^{n-1})=\inf_{0\not\equiv\varphi \in C_c^\infty(\overline{\mathbb{R}_+^n})}\frac{\frac{4(n-1)}{n-2}\int_{\mathbb{R}_+^n} |\nabla \varphi|^2 dx}{a \left(\int_{\mathbb{R}_+^n}|\varphi|^{\frac{2n}{n-2}}dx\right)^{\frac{n-2}{n}}+2(n-1)b \left(\int_{\mathbb{R}^{n-1}}|\varphi|^{\frac{2(n-1)}{n-2}}d\sigma\right)^{\frac{n-2}{n-1}}}$$
with some $a,b>0$ (see \cite[Theorem 3.3]{escobar5} or Lemma \ref{lem:Sobolev_trace_mfld} below).

By definition of ~$Y_{a,b}(M,\pa M)$, any positive minimizers in $H^1(M,\pa M)$ of $Y_{a,b}(M,\pa M)$ satisfy
\begin{align}\label{eq:critical_exponent}
\begin{cases}
\displaystyle-\frac{4(n-1)}{n-2}\Delta_{g_0}u+R_{g_0}u=\mu(M) a \left(\int_Mu^\frac{2n}{n-2}d\mu_{g_0}\right)^{-\frac{2}{n}}u^{\frac{n+2}{n-2}}&\hbox{~~in~~} M,\\
\displaystyle\frac{2}{n-2}\frac{\partial u}{\partial{\nu_{g_0}}}+h_{g_0}u=\mu(M) b\left(\int_{\partial M}u^{\frac{2(n-1)}{n-2}}d\sigma_{g_0}\right)^{-\frac{1}{n-1}}u^{\frac{n}{n-2}}&\hbox{~~on~~}\partial M,
\end{cases}
\end{align}
where $\mu(M)=Y_{a,b}(M,\pa M)$. It is not hard to see that the Euler-Lagrange equation for $Y_{a,b}(\mathbb{R}^n_+,g_{\mathbb{R}^n})$ is equivalent to finding positive solutions of the following PDE:
\begin{align}\label{eq:Rncase}
\begin{cases}
\displaystyle-\frac{4(n-1)}{n-2}\Delta u=\mu a \left(\int_{\mathbb{R}_+^n}u^\frac{2n}{n-2}dx\right)^{-\frac{2}{n}}u^{\frac{n+2}{n-2}}\qquad\hbox{ in } \mathbb{R}^n_+,\\
-\displaystyle\frac{2}{n-2}\frac{\partial u}{\partial{y^n}}=\mu b\left(\int_{\mathbb{R}^{n-1}}u^{\frac{2(n-1)}{n-2}}d\sigma\right)^{-\frac{1}{n-1}}u^{\frac{n}{n-2}}\quad\text{ on } \mathbb{R}^{n-1},
\end{cases}
\end{align}
 where $\mu=\mu(\mathbb{R}_+^n)=Y_{a,b}(\mathbb{R}^n_+,\mathbb{R}^{n-1})$. A simple but vital observation is that if $u$ is a smooth positive solution to problem \eqref{eq:Rncase}, so is $c_\ast u$ for all $c_\ast \in \mathbb{R}_+$.
Hence all positive solutions to problem \eqref{eq:Rncase} are in the form of, up to dilations and translations in the variables $y^1,\cdots,y^{n-1},$
$$c_\ast  \left(\frac{1}{1+|y-T_c \mathbf{e}_n|^2}\right)^{\frac{n-2}{2}}$$
for all $c_\ast>0$ and some $T_c<0$ depending on $n,a,b$. We choose $c_\ast=1$ hereafter, namely, for this fixed $T_c<0$, the associated function
$$W(y)=\left(\frac{1}{1+|y-T_c \mathbf{e}_n|^2}\right)^{\frac{n-2}{2}}$$
is a positive solution to both PDEs \eqref{prob:half-space} and \eqref{eq:Rncase}.

Next we give a geometric interpretation for the standard boundary bubble function $W$. Denote by a mapping $\pi: S^n(T_c\mathbf{e}_n) \setminus\{T_c \mathbf{e}_n+\mathbf{e}_{n+1}\} \to \{\xi+T_c \mathbf{e}_n \in \mathbb{R}^{n+1};\xi^{n+1}=0\}\simeq \mathbb{R}^n$ the stereographic projection from the unit sphere $S^n(T_c\mathbf{e}_n)$ in $\mathbb{R}^{n+1}$ centered at $T_c \mathbf{e}_n$. Then for $y \in \mathbb{R}_+^n$, we set $\xi=\pi^{-1}(y) \in S^n$, namely (see also  \cite[(3.1) on page 831]{han-li1})
\begin{align*}
\left\{\begin{array}{ll}
\displaystyle \xi^a=\frac{2y^a}{1+|y-T_c\mathbf{e}_n|^2}, \quad \hbox{for~~} 1 \leq a \leq n-1,\\
\displaystyle \xi^n=\frac{2(y^n-T_c)}{1+|y-T_c\mathbf{e}_n|^2},\\
\displaystyle \xi^{n+1}=\frac{|y-T_c\mathbf{e}_n|^2-1}{1+|y-T_c\mathbf{e}_n|^2}.
\end{array}
\right.
\end{align*}
Let $\Sigma$ be a spherical cap (see Figure \ref{fig:stereo_graphic}) equipped with a round metric $\frac{1}{4}g_{S^n}$, where $g_{S^n}$ is the standard metric of the unit sphere $S^n(T_c\mathbf{e}_n)$. Then a direct computation shows
\begin{equation*}
\frac{1}{4}(\pi^{-1})^\ast(g_{S^n})=\left(\frac{1}{1+|y-T_c\mathbf{e}_n|^2}\right)^2 g_{\mathbb{R}^n}=W(y)^{\frac{4}{n-2}}g_{\mathbb{R}^n}.
\end{equation*}
From this and the fact that $Y_{a,b}(\Sigma,\pa \Sigma)$ is a conformal invariant, we conclude that
$$Y_{a,b}(\mathbb{R}^n_+,\mathbb{R}^{n-1})=Y_{a,b}(\Sigma,\partial \Sigma)=Y_{a,b}(S^n_+,S^{n-1}).$$
To simplify the notation, we will sometimes use $Y_{a,b}(\mathbb{R}^n_+,\mathbb{R}^{n-1})$ with a slightly relaxed condition of $a,b$ such that $a,b\geq 0, a^2+b^2>0$.
\begin{figure}[ht]
\vskip -0.2 in
\centering
\includegraphics[width=5.5in]{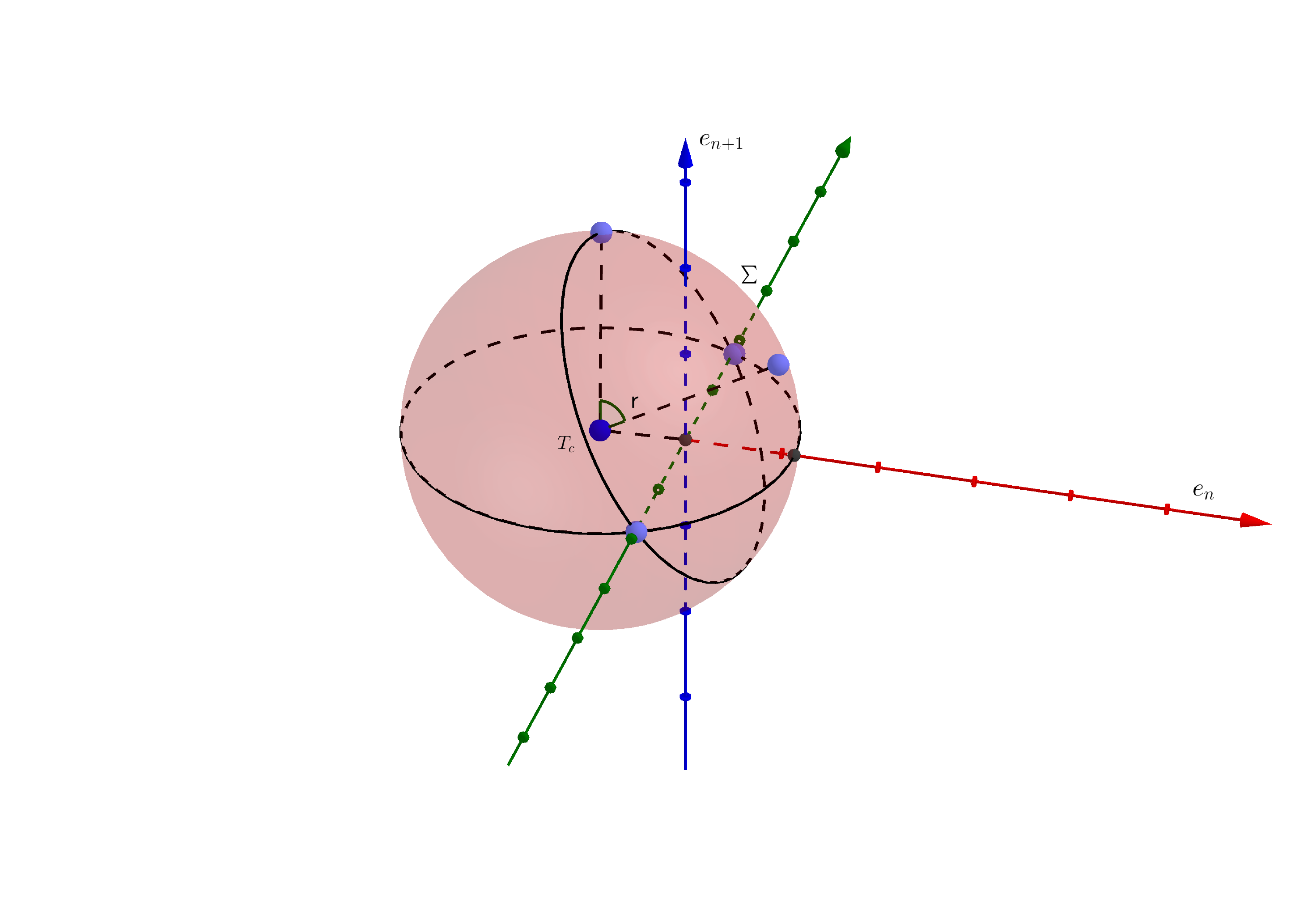}
\vskip -0.4in
\caption{Stereographic projection for $T_c<0$}\label{fig:stereo_graphic}
\end{figure}

Let  $\omega_{n-1}$ denote the volume of the standard unit sphere in $\mathbb{R}^n$. Define
\begin{align*}
A=\int_{\mathbb{R}^n_+}W(y)^{\frac{2n}{n-2}}dy\quad\hbox{and}\quad B=\int_{\mathbb{R}^{n-1}}W(y)^{\frac{2(n-1)}{n-2}}d\sigma.
\end{align*}
Notice that $A,B$ only depend on $n, T_c$.  Using \eqref{prob:half-space} we get
 \begin{align}\label{energy:bubble}
 \int_{\mathbb{R}^n_+}|\nabla W(y)|^2dy=n(n-2)A-(n-2)T_cB.
 \end{align}
% For such $\U$, \eqref{eq:crit_point_eq} is equivalent to 
% \begin{align*}
% \begin{cases}
% \displaystyle-\frac{4(n-1)}{n-2}\Delta u=\lambda a A^{-\frac{2}{n}}u^{\frac{n+2}{n-2}}\\
% \displaystyle-\frac{2}{n-2}\frac{\partial u}{\partial{y_n}}=\lambda bB^{-\frac{1}{n-1}}u^{\frac{n}{n-2}}
% \end{cases}
% \end{align*}
% This is equivalent to 
% \begin{align}
% \begin{cases}
% -\Delta u=\lambda \frac{n-2}{4(n-1)}a A^{-\frac{2}{n}}u^{\frac{n+2}{n-2}}\\
% -\frac{\partial u}{\partial{y_n}}=\lambda\frac{n-2}{2} bB^{-\frac{1}{n-1}}u^{\frac{n}{n-2}}
% \end{cases}
% \end{align}
Recall that, from \cite[Theorem 3.3]{escobar5} that $Y_{a,b}(\mathbb{R}^n_+,\mathbb{R}^{n-1})$ can be achieved by $W$ with some $T_c$ (up to dilations and translations in variables $y^1,\cdots,y^{n-1}$) modulo a positive constant multiple. Comparing \eqref{eq:Rncase} and \eqref{prob:half-space}, as well as the above comments, we have
\begin{align*}
\mu \frac{n-2}{4(n-1)}aA^{-\frac{2}{n}}=n(n-2),\quad \mu \frac{n-2}{2}b B^{-\frac{1}{n-1}}=-(n-2)T_c,
\end{align*}
whence
$$-aA^{-\frac{2}{n}}T_c=2n(n-1)bB^{-\frac{1}{n-1}}.$$
Indeed we will establish that each pair of $a,b>0$ corresponds to a unique $T_c$ satisfying the above identity.
\begin{lemma} \label{lem:Y_{a,b}halfspace}
Given any $a,b>0$, there exists a unique $T_c\in (-\infty,0)$ such that
\begin{align}\label{eq:AB_T_c}
-aA^{-\frac{2}{n}}T_c=2n(n-1)bB^{-\frac{1}{n-1}}.
\end{align}
In particular, $T_c$ is a continuous function of $(a,b) \in \mathbb{R}_+\times \mathbb{R}_+$.
Moreover, for such a $W$ satisfying \eqref{eq:Rncase} with the above unique $T_c$\,, there holds
\begin{align*}
Y_{a,b}(\mathbb{R}^n_+,\mathbb{R}^{n-1})= 4n(n-1)a^{-1}A^{\frac{2}{n}}=-2T_cb^{-1}B^{\frac{1}{n-1}}.
\end{align*}
\end{lemma}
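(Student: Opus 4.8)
The plan is to analyze the two normalization identities
$$\mu \frac{n-2}{4(n-1)}aA^{-\frac{2}{n}}=n(n-2),\qquad \mu \frac{n-2}{2}b B^{-\frac{1}{n-1}}=-(n-2)T_c$$
coming from the comparison of \eqref{eq:Rncase} and \eqref{prob:half-space}, and to turn the derived relation \eqref{eq:AB_T_c} into an equation for $T_c$ alone. Since $W$ (hence $A$ and $B$) depends only on $n$ and $T_c$, I would write $A=A(T_c)$ and $B=B(T_c)$ explicitly as integrals of the bubble \eqref{eq:bdry_bubble} with $\e=1$. After the translation $y\mapsto y+T_c\mathbf{e}_n$ (which does not affect the $\mathbb{R}^n_+$ integral once we note that the half-space $\{y^n>0\}$ becomes $\{y^n>-T_c\}$, i.e. $\{y^n>|T_c|\}$), one sees that $A(T_c)=\int_{\{y^n>|T_c|\}}(1+|y|^2)^{-n}\,dy$ and $B(T_c)=\int_{\mathbb{R}^{n-1}}(1+|\bar y|^2+T_c^2)^{-(n-1)}\,d\bar y$. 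The second integral is elementary: $B(T_c)=(1+T_c^2)^{-(n-1)/2}\int_{\mathbb{R}^{n-1}}(1+|z|^2)^{-(n-1)}dz=\tilde c_n(1+T_c^2)^{-(n-1)/2}$ for an explicit dimensional constant $\tilde c_n$. The first integral $A(T_c)$ is a strictly increasing function of $|T_c|$ (as $|T_c|$ grows the region $\{y^n>|T_c|\}$ shrinks — wait, one must be careful with signs; in fact as $T_c\to 0^-$ the region is a half-space and $A$ is largest, and as $T_c\to-\infty$ the region recedes to infinity and $A\to 0$), so $A$ is continuous, positive, and strictly monotone in $T_c$ on $(-\infty,0)$.

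Next I would define
$$F(T_c):=\frac{-T_c\, B(T_c)^{\frac{1}{n-1}}}{A(T_c)^{\frac{2}{n}}},\qquad T_c\in(-\infty,0),$$
so that \eqref{eq:AB_T_c} reads $a\,F(T_c)=2n(n-1)\,b$, i.e. $F(T_c)=2n(n-1)\,b/a$. To prove existence and uniqueness of $T_c$ it suffices to show $F$ is a continuous strictly monotone bijection from $(-\infty,0)$ onto $(0,+\infty)$. Continuity is clear from the integral representations and dominated convergence. For the boundary behavior: as $T_c\to 0^-$ we have $-T_c\to 0$, $B(T_c)\to \tilde c_n$, and $A(T_c)\to A(0)\in(0,\infty)$, so $F(T_c)\to 0$; as $T_c\to-\infty$ we have $-T_c\to\infty$, $B(T_c)\sim \tilde c_n |T_c|^{-(n-1)}$ so $B^{1/(n-1)}\sim \tilde c_n^{1/(n-1)}|T_c|^{-1}$, hence $-T_c\,B^{1/(n-1)}$ stays bounded away from $0$, while $A(T_c)\to 0$ and thus $A^{-2/n}\to\infty$, giving $F(T_c)\to+\infty$. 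Therefore $F$ is onto $(0,\infty)$. For injectivity I would either differentiate $\log F(T_c)$ and show $(\log F)'$ has a fixed sign using the monotonicity of $A$ and the explicit form of $B$, or — cleaner — observe that $A$ is strictly decreasing in $|T_c|$ while $-T_c\,B^{1/(n-1)}=\tilde c_n^{1/(n-1)}\,|T_c|/(1+T_c^2)^{1/2}$ is strictly increasing in $|T_c|$, so the quotient $F$ is strictly increasing in $|T_c|$, i.e. strictly decreasing in $T_c$; monotonicity then forces uniqueness. Continuity of $T_c=T_c(a,b)$ in $(a,b)\in\mathbb{R}_+\times\mathbb{R}_+$ follows since $T_c$ is the inverse of the continuous strictly monotone map $F$ evaluated at the continuous quantity $2n(n-1)b/a$.

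Finally, for the value of the invariant, I would plug the solved-for $\mu=Y_{a,b}(\mathbb{R}^n_+,\mathbb{R}^{n-1})$ back into the first normalization identity, which gives directly $\mu=4n(n-1)a^{-1}A^{2/n}$; combining this with the second identity (or with \eqref{eq:AB_T_c}) yields the equivalent expression $\mu=-2T_c b^{-1}B^{1/(n-1)}$, establishing the last display of the lemma. The main obstacle I anticipate is the monotonicity claim for $A(T_c)$ and for the combination $F$: one must set up the change of variables carefully (the shift by $T_c\mathbf{e}_n$ and the resulting moving domain $\{y^n>|T_c|\}$), justify differentiation under the integral sign, and correctly track the sign of $T_c$ throughout — a small sign error there would reverse the monotonicity and break the argument. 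Everything else is either elementary integration of rational functions over $\mathbb{R}^{n-1}$ or a direct substitution into the already-derived identities.
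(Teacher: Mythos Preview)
Your argument is correct and actually more self-contained than the paper's. The paper proceeds by the trigonometric substitution $\cos r=-T_c/\sqrt{1+T_c^2}$, $r\in(0,\pi/2)$, which turns $A,B$ into spherical-cap quantities $2^nA(r)=\omega_{n-1}\int_0^r(\sin\tau)^{n-1}d\tau$ and $2^{n-1}B(r)=\omega_{n-1}(\sin r)^{n-1}$; it then defines $f(r)=2n(n-1)b\,A^{2/n}B^{-1/(n-1)}-a\cot r$ and proves $f$ has a unique zero by computing $\frac{d}{dr}\log(B^{-1/(n-1)}A^{2/n})$ and bounding $\cos r\int_0^r(\sin\tau)^{n-1}d\tau\leq \frac{1}{n}(\sin r)^n$. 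You instead work directly with the translated bubble: $A(T_c)=\int_{\{y^n>|T_c|\}}(1+|y|^2)^{-n}dy$ is strictly decreasing in $|T_c|$ by domain monotonicity, and $-T_c\,B^{1/(n-1)}=\tilde c_n^{1/(n-1)}|T_c|/\sqrt{1+T_c^2}$ is strictly increasing in $|T_c|$ by inspection, so the quotient $F$ is strictly monotone without any derivative computation. Your boundary behavior is correct, and your continuity of $T_c(a,b)$ via the inverse of a strictly monotone continuous map avoids the paper's forward reference to Proposition~\ref{prop:continuity_conformal_invariant}. For the value of $Y_{a,b}(\mathbb{R}^n_+,\mathbb{R}^{n-1})$ the paper recomputes the quotient via the energy identity \eqref{energy:bubble}, whereas you simply solve the already-derived normalization identity $\mu\,\frac{n-2}{4(n-1)}aA^{-2/n}=n(n-2)$ for $\mu$; both are valid. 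In short: the paper's route is more geometric (tying $A,B$ to the spherical cap $\Sigma$), yours is more elementary and avoids any calculus beyond domain monotonicity.
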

\begin{proof}
Let $\cos r=\frac{-T_c}{\sqrt{1+T_c^2}}, r \in (0,\frac{\pi}{2})$, then $A$ and $B$ turn to
\begin{equation}\label{eq:A_B}
2^n A(r)=\omega_{n-1}\int_0^r(\sin \tau)^{n-1}d\tau,~~2^{n-1}B(r)=\omega_{n-1}(\sin r)^{n-1}.
\end{equation}
Then equation \eqref{eq:AB_T_c} is equivalent to finding some $r \in (0,\frac{\pi}{2})$ such that
$$f(r):=2n(n-1)b A^{\frac{2}{n}}B^{-\frac{1}{n-1}}-a\cot r=0.$$
First it is easy to verify that
$$\lim_{r \searrow 0} f(r)=-\infty \hbox{~~and~~} \lim_{r\nearrow \frac{\pi}{2}}f(r)=\mathrm{constant}>0.$$
Next we claim that $f(r)$ is increasing in $(0,\frac{\pi}{2})$. To see this, we have
\begin{align*}
&\frac{d}{d r}\log (B^{-\frac{1}{n-1}}A^{\frac{2}{n}})=\frac{2}{n}\frac{A'}{A}-\frac{1}{n-1}\frac{B'}{B}\\
=&\frac{1}{\sin r \int_0^r(\sin\tau)^{n-1}d\tau}\left[\frac{2}{n} (\sin r)^n-\cos r \int_0^r(\sin\tau)^{n-1}d\tau\right].
\end{align*}
Observe that
$$\cos r \int_0^r(\sin\tau)^{n-1}d\tau\leq \int_0^r(\sin\tau)^{n-1} \cos \tau d\tau=\frac{1}{n}(\sin r)^n.$$
This implies $(B^{-\frac{1}{n-1}}A^{\frac{2}{n}})(r)$ is increasing in $(0,\frac{\pi}{2})$, as well as is $f(r)$. Hence we conclude that there exists a unique $r \in (0,\frac{\pi}{2})$ such that $f(r)=0$, namely there exists a unique $T_c<0$ satisfying \eqref{eq:AB_T_c}. 

By \cite[Theorem 3.3]{escobar5}, \eqref{energy:bubble} and \eqref{eq:AB_T_c}, we get
\begin{align}\label{conf_invar_halfspace}
Y_{a,b}(\mathbb{R}^n_+,\mathbb{R}^{n-1})=& \frac{\frac{4(n-1)}{n-2}\int_{\mathbb{R}^n_+}|\nabla W|^2dy}{aA^{\frac{n-2}{n}}+2(n-1)bB^{\frac{n-2}{n-1}}}\no\\
=&\frac{4(n-1)}{n-2}\frac{n(n-2)A-(n-2)T_cB}{aA^{\frac{n-2}{n}}+2(n-1)bB^{\frac{n-2}{n-1}}}\no\\
=&4n(n-1)A^{\frac{2}{n}}a^{-1}=-2B^{\frac{1}{n-1}}T_cb^{-1}.
\end{align}
In terms of the variable $T_c$, it follows from \eqref{eq:A_B} that $A(T_c)$ is increasing in $(-\infty,0)$. One may regard $T_c$ as a function of $(a,b)$. Indeed one can show that $Y_{a,b}(\mathbb{R}_+^n,\mathbb{R}^{n-1})$ is continuous in $(a,b) \in \mathbb{R}_+\times \mathbb{R}_+$ (see Proposition \ref{prop:continuity_conformal_invariant} below). From this and the third identity in \eqref{conf_invar_halfspace}, we get $A$ is a continuous function of $(a,b)$.  Hence we conclude that $T_c$ is a continuous function in $(a,b)$.
\end{proof}

From now on, we fix $T_c<0$ as the unique one in Lemma \ref{lem:Y_{a,b}halfspace} without otherwise stated. 
In \cite{escobar3}, Escobar introduced a conformal invariant by $G_{a,b}=\inf\{E[u]; u\in C_{a,b}\}$, where $a>0, b \in \mathbb{R}$ and
$$C_{a,b}=\left\{u\in C^1(\bar{M}); a\int_M |u|^{\frac{2n}{n-2}}d\mu_{g_{0}}+b\int_{\partial M}|u|^{\frac{2(n-1)}{n-2}}d\sigma_{g_0}=1\right\}.$$
He established that $G_{a,b}(M)\leq G_{a,b}(\mathbb{R}^n_+)$ holds for any compact Riemannian manifold with boundary. By similarly constructing a local test function as a perturbation of $W_\e$ under the Fermi coordinates around a boundary point, one can mimick the proof of \cite[Proposition 3.1]{escobar3} to show $Y_{a,b}(M,\pa M)\leq Y_{a,b}(\mathbb{R}^n_+,\mathbb{R}^{n-1})$. Since it is more or less standard to the experts in this field, we omit the details here.

\section{A criterion of the existence of minimizers}\label{Sect3}
The purpose of this section is to establish Theorem \ref{Thm:minimizers}.  We adopt the method of subcritical approximations to realize it. For $1< q\leq \frac{n+2}{n-2}$, we define
\begin{align*}
\q[u]=\frac{E[u]}{a\left(\int_M|u|^{q+1}d\mu_{g_0}\right)^{\frac{2}{q+1}}+2(n-1)b\left(\int_{\partial M}|u|^{\frac{q+3}{2}}d\sigma_{g_0}\right)^{\frac{4}{q+3}}}
\end{align*}
for any $u \in H^1(M,g_0)$. Notice that $\q[u]$ always has a lower bound when $Y(M,\pa M)\geq 0$, we set 
$$\mu_q=\inf_{0\not \equiv u\in H^1(M,g_0)}\q[u].$$
For brevity, we use $\mu_{(n+2)/(n-2)}=Y_{a,b}(M,\pa M)$ and $\mathcal{Q}_{a,b}^{(n+2)/(n-2)}[u]=\mathcal{Q}_{a,b}[u]$.  
\begin{lemma}\label{lem:subcritical_invariants}
Given $a,b>0$, there holds $\limsup_{q\nearrow \frac{n+2}{n-2}}\mu_q\leq Y_{a,b}(M,\pa M).$ Moreover, if $Y(M,\pa M)\geq 0$, there holds $\lim_{q \nearrow \frac{n+2}{n-2}}\mu_q=Y_{a,b}(M,\pa M)$.
\end{lemma}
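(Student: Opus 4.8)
The plan is to prove the two assertions separately. For the inequality $\limsup_{q\nearrow(n+2)/(n-2)}\mu_q\le Y_{a,b}(M,\partial M)$, I would fix an arbitrary $0\not\equiv u\in H^1(M,g_0)$ — and since $\q$ and $\mathcal{Q}_{a,b}$ are unchanged under $u\mapsto|u|$, I may take $u\ge 0$ and then by density approximate it by a smooth positive function, so that all the $L^p$ norms involved are finite and positive. As $q\nearrow(n+2)/(n-2)$, the exponents $q+1\to 2n/(n-2)$ and $(q+3)/2\to (n-1)/(n-2)$, and on the \emph{compact} manifold $M$ (with compact boundary) the maps $q\mapsto\int_M|u|^{q+1}d\mu_{g_0}$ and $q\mapsto\int_{\partial M}|u|^{(q+3)/2}d\sigma_{g_0}$ are continuous (dominated convergence, using $u\in L^\infty$). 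Hence $\q[u]\to\mathcal{Q}_{a,b}[u]$ for each such fixed $u$, which gives $\limsup_{q\nearrow(n+2)/(n-2)}\mu_q\le\limsup_{q}\q[u]=\mathcal{Q}_{a,b}[u]$. Taking the infimum over $u$ yields $\limsup_q\mu_q\le Y_{a,b}(M,\partial M)$.

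For the reverse inequality under the hypothesis $Y(M,\partial M)\ge 0$, I would show $\liminf_{q\nearrow(n+2)/(n-2)}\mu_q\ge Y_{a,b}(M,\partial M)$. Pick a sequence $q_k\nearrow(n+2)/(n-2)$ with $\mu_{q_k}\to\liminf_q\mu_q$, and for each $k$ choose a near-minimizer $u_k$ for $\mu_{q_k}$, normalized (say) so that $a(\int_M|u_k|^{q_k+1})^{2/(q_k+1)}+2(n-1)b(\int_{\partial M}|u_k|^{(q_k+3)/2})^{4/(q_k+3)}=1$; then $E[u_k]=\q[u_k]\to\liminf_q\mu_q$. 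Since $Y(M,\partial M)\ge 0$, the quadratic form $E$ is (after the footnote's conformal change, or directly) coercive up to lower-order terms, so $\|u_k\|_{H^1(M,g_0)}$ is bounded; pass to a subsequence with $u_k\rightharpoonup u$ weakly in $H^1$, strongly in $L^2(M)$ and in $L^2(\partial M)$, and a.e. The subcritical Sobolev and trace embeddings $H^1(M)\hookrightarrow L^{q_k+1}(M)$ and $H^1(M)\hookrightarrow L^{(q_k+3)/2}(\partial M)$ are \emph{compact} for each fixed $q_k<(n+2)/(n-2)$, but since the exponents are approaching the critical ones I would instead argue uniformly: use the single critical Sobolev/trace inequality $Y_{a,b}(M,\partial M)\cdot(\text{critical denom.})\le E[\,\cdot\,]+C\|\cdot\|_{L^2}^2$ together with interpolation to bound the subcritical denominators of $u_k$ from below by their critical analogues minus a vanishing error, and lower semicontinuity of $E$ plus strong $L^2$-convergence to pass to the limit. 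Concretely, $\liminf_k E[u_k]\ge E[u]\ge Y_{a,b}(M,\partial M)\cdot D(u)$ where $D(u)$ is the critical denominator, while the normalization forces $D(u)\ge 1$ in the limit (here one uses that the subcritical norms converge to the critical norms along $u_k\to u$ by Vitali/uniform integrability, exploiting the uniform $H^1$-bound and the Sobolev embedding with exponent strictly above all the $q_k+1$), giving $\liminf_q\mu_q\ge Y_{a,b}(M,\partial M)$.

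Combining the two inequalities gives $\lim_{q\nearrow(n+2)/(n-2)}\mu_q=Y_{a,b}(M,\partial M)$ when $Y(M,\partial M)\ge 0$, and the first (unconditional) inequality is the $\limsup$ statement.

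The main obstacle is the lower bound: controlling the subcritical denominators of the minimizing sequence $u_k$ uniformly in $q_k$ near the critical exponent, since the compactness of each individual subcritical embedding degenerates as $q_k\to(n+2)/(n-2)$. The clean way around this is to avoid extracting compactness from the subcritical embeddings and instead (i) use the uniform $H^1$-bound coming from $Y(M,\partial M)\ge 0$, (ii) apply the \emph{critical} Sobolev–trace inequality on $(M,g_0)$ — which holds with constant $Y_{a,b}(M,\partial M)$ up to a lower-order $L^2$-term — to $u_k$ directly, and (iii) show the subcritical denominators differ from the critical ones by $o(1)$ using uniform integrability (a consequence of a uniform $L^{2n/(n-2)+\delta}$-type bound is not available, but Hölder against the $H^1$-bounded $L^{2n/(n-2)}$-norm combined with the convergence $q_k+1\to 2n/(n-2)$ suffices via dominated/Vitali convergence once one also uses the a.e. convergence of $u_k$). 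I expect steps (i) and (iii) to require the most care; everything else is standard.
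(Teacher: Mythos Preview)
Your treatment of the first assertion matches the paper's: fix a test function $\bar u$, use continuity of the $L^p$ norms in $p$ to get $\q[\bar u]\to\mathcal{Q}_{a,b}[\bar u]$, then take the infimum over $\bar u$.

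For the second assertion, however, your approach is much more complicated than necessary and, as written, has a gap. The paper's argument is a single observation: for every $u$ one has
\[
\mathcal{Q}_{a,b}[u] \;=\; \q[u]\cdot\frac{\text{(subcritical denominator of }u\text{)}}{\text{(critical denominator of }u\text{)}},
\]
and since $Y(M,\partial M)\ge 0$ forces $E[u]\ge 0$ (hence $\q[u]\ge 0$), one only needs the ratio on the right to be bounded above by a factor $C_q\to 1$. This is immediate from H\"older's inequality on $M$ and on $\partial M$, the volume prefactors tending to $1$ as $q\nearrow\tfrac{n+2}{n-2}$. Taking the infimum over $u$ gives $Y_{a,b}(M,\partial M)\le C_q\,\mu_q$, hence $\liminf_q\mu_q\ge Y_{a,b}(M,\partial M)$. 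No minimizing sequences, weak limits, or compactness are needed.

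Your route via a near-minimizing sequence $u_k$ and its weak limit $u$ breaks at step (iii). The Vitali argument does not go through: H\"older gives
\[
\int_E|u_k|^{q_k+1}\le |E|^{\,\epsilon_k}\Bigl(\int_M|u_k|^{\frac{2n}{n-2}}\Bigr)^{(q_k+1)\frac{n-2}{2n}},\qquad \epsilon_k=1-(q_k+1)\tfrac{n-2}{2n}\to 0,
\]
so the family $\{|u_k|^{q_k+1}\}_k$ is \emph{not} uniformly integrable as $q_k\to\tfrac{n+2}{n-2}$, and there is no Sobolev embedding into an exponent strictly above $\tfrac{2n}{n-2}$ to rescue it. Consequently the conclusion $D(u)\ge 1$ for the weak limit is unjustified; in fact it is generally false, since this is exactly where concentration can occur and cannot be excluded without an inequality of the type $Y_{a,b}(M,\partial M)<Y_{a,b}(S^n_+,S^{n-1})$, which is not assumed in this lemma. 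The fix is to drop the weak limit altogether and apply H\"older to $u_k$ itself: the normalization together with H\"older gives that the \emph{critical} denominator of $u_k$ is at least $C_{q_k}^{-1}\to 1$, and then the defining inequality $E[u_k]\ge Y_{a,b}(M,\partial M)\cdot(\text{critical denominator of }u_k)$ (no $L^2$ correction is needed here---this is just the definition of the infimum) finishes the argument. But that is precisely the paper's one-line proof, with the detour through weak limits removed.
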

\begin{proof}
For any $\e>0$, there exists $\bar u>0$ such that $\mathcal{Q}_{a,b}[\bar u]\leq Y_{a,b}(M,\pa M)+\e$. For each $\bar u$, there holds $\lim_{q \nearrow \frac{n+2}{n-2}}\q[\bar u]=\mathcal{Q}_{a,b}[\bar u]$. Then we have
$$\limsup_{q \nearrow \frac{n+2}{n-2}}\mu_q\leq \limsup_{q \nearrow \frac{n+2}{n-2}}\q[\bar u]\leq Y_{a,b}(M,\pa M)+\e,$$
which gives the first assertion. If $Y(M,\pa M)\geq 0$, then $E[u]\geq 0$ for any $u \in H^1(M,g_0)$. Notice that 
\begin{align*}
\mathcal{Q}_{a,b}[u]=\q[u]\frac{a\left(\int_{M}|u|^{\frac{q+2}{2}}d\mu_{g_0}\right)^{\frac{2}{q+1}}+2(n-1)b\left(\int_{\partial M}|u|^{\frac{q+3}{2}}d\sigma_{g_0}\right)^{\frac{4}{q+3}}}{a\left(\int_{M}|u|^{\frac{2n}{n-2}}d\mu_{g_0}\right)^{\frac{n-2}{n}}+2(n-1)b\left(\int_{\partial M}|u|^{\frac{2(n-1)}{n-2}}d\sigma_{g_0}\right)^{\frac{n-2}{n-1}}}.
\end{align*}
Hence the second assertion follows by H\"older's inequality and letting $q \nearrow \frac{n+2}{n-2}$.
\end{proof}
\begin{remark}
We point out that there also holds $\lim_{q \nearrow \frac{n+2}{n-2}}\mu_q=Y_{a,b}(M,\pa M)$ when $Q(M,\pa M)$ is a negative real number (see \cite[Remark 7.1]{ChenHoSun}).
\end{remark}
Again thanks to  \cite{ChenHoSun}, it is enough to prove Theorem \ref{Thm:minimizers} when $Y(M,\pa M)\geq 0$.
\begin{lemma}\label{lem:est_H^1_fcns}
Let $(M,g_0)$ be a smooth compact Riemannian manifold of dimension $n \geq 3$ with boundary. Let $2\leq p<\frac{2(n-1)}{n-2}$, then given any $\e>0$, there exists $C=C(n,M,g_0)>0$ such that
$$\left(\int_{\pa M}|\varphi|^p d\sigma_{g_0}\right)^{\frac{2}{p}}\leq \e \int_M |\nabla \varphi|_{g_0}^2 d\mu_{g_0}+\frac{C}{\e}\int_M \varphi^2d\mu_{g_0} $$
for any $\varphi\in H^1(M,g_0)$.
\end{lemma}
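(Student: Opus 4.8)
The plan is to reduce the claimed trace inequality to the standard Sobolev trace embedding together with an interpolation argument, exactly the kind of ``$\e$–Young'' splitting one uses for compactness. First I would recall that for a smooth compact manifold with boundary, the trace operator $H^1(M,g_0)\to L^{2(n-1)/(n-2)}(\pa M,\sigma_{g_0})$ is bounded; call the best constant $C_0=C_0(n,M,g_0)$, so that
\begin{equation*}
\left(\int_{\pa M}|\varphi|^{\frac{2(n-1)}{n-2}}d\sigma_{g_0}\right)^{\frac{n-2}{n-1}}\leq C_0\int_M(|\nabla\varphi|_{g_0}^2+\varphi^2)\,d\mu_{g_0}.
\end{equation*}
Since $2\leq p<\frac{2(n-1)}{n-2}$, on the finite-measure space $(\pa M,\sigma_{g_0})$ we may interpolate the $L^p$ norm between $L^2$ and $L^{2(n-1)/(n-2)}$: writing $\frac1p=\frac{1-\theta}{2}+\theta\frac{n-2}{2(n-1)}$ with $\theta\in(0,1)$, Hölder's inequality gives $\|\varphi\|_{L^p(\pa M)}\leq \|\varphi\|_{L^2(\pa M)}^{1-\theta}\|\varphi\|_{L^{2(n-1)/(n-2)}(\pa M)}^{\theta}$. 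The key point is that the exponent $\theta$ is strictly less than $1$, which is precisely where the strict inequality $p<\frac{2(n-1)}{n-2}$ enters.

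Next I would control each factor. For the high-integrability factor, the trace embedding above bounds $\|\varphi\|_{L^{2(n-1)/(n-2)}(\pa M)}^2$ by $C_0\|\varphi\|_{H^1(M)}^2\leq C_0(\|\nabla\varphi\|_{L^2(M)}^2+\|\varphi\|_{L^2(M)}^2)$. For the low-integrability factor, I would further bound the boundary $L^2$ norm $\|\varphi\|_{L^2(\pa M)}$ by the $H^1(M)$ norm (again by the trace theorem, with $p=2$, which is admissible). Raising the interpolation inequality to the second power and substituting yields
\begin{equation*}
\left(\int_{\pa M}|\varphi|^p d\sigma_{g_0}\right)^{\frac{2}{p}}\leq C_1\left(\|\nabla\varphi\|_{L^2(M)}^2+\|\varphi\|_{L^2(M)}^2\right)^{1-\theta}\left(\|\nabla\varphi\|_{L^2(M)}^2+\|\varphi\|_{L^2(M)}^2\right)^{\theta}
\end{equation*}
— but this naive route only recovers $\|\varphi\|_{H^1}^2$, not the desired $\e$–split. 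So instead I would keep the low factor as $\|\varphi\|_{L^2(\pa M)}^{2(1-\theta)}$ and apply Young's inequality $xy\leq \delta x^{1/\theta}+C_\delta y^{1/(1-\theta)}$ with $x=\|\varphi\|_{L^{2(n-1)/(n-2)}(\pa M)}^{2\theta}$ and $y=\|\varphi\|_{L^2(\pa M)}^{2(1-\theta)}$, turning the product into $\delta\,\|\varphi\|_{L^{2(n-1)/(n-2)}(\pa M)}^2+C_\delta\,\|\varphi\|_{L^2(\pa M)}^2$. Now the first term is absorbed by $\delta C_0(\|\nabla\varphi\|_{L^2(M)}^2+\|\varphi\|_{L^2(M)}^2)$; choosing $\delta=\e/(2C_0)$ makes the gradient coefficient at most $\e/2$, while the extra $\delta C_0\|\varphi\|_{L^2(M)}^2$ term is harmless since it has a bounded coefficient. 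It remains to handle the leftover $C_\delta\|\varphi\|_{L^2(\pa M)}^2$ by a boundary-to-interior estimate with a small parameter: the elementary trace inequality $\|\varphi\|_{L^2(\pa M)}^2\leq \e'\|\nabla\varphi\|_{L^2(M)}^2+C(\e')\|\varphi\|_{L^2(M)}^2$ holds on any such $M$ (e.g. via a boundary collar, the divergence theorem applied to $|\varphi|^2$ times a vector field equal to $\nu_{g_0}$ near $\pa M$, and Young's inequality on the cross term). Taking $\e'$ small relative to $\e/C_\delta$ completes the absorption, and collecting constants gives the stated inequality with $C=C(n,M,g_0)$ depending only on the geometry.

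The main obstacle is bookkeeping the chain of small parameters ($\delta$, then $\e'$) so that the total coefficient of $\int_M|\nabla\varphi|^2$ comes out to exactly $\e$ while the coefficient of $\int_M\varphi^2$ blows up only like $C/\e$; this requires tracking how $C_\delta$ depends on $\delta$ through Young's inequality (it grows polynomially in $1/\delta$, hence in $1/\e$), and then choosing $\e'$ accordingly. Strictly speaking the statement asserts the mild form $C/\e$ rather than an arbitrary $C(\e)$, so one should check the exponents in Young's inequality give at worst a polynomial dependence, which is routine once $\theta$ is fixed; if a sharper tracking is needed one can instead iterate a single ``$\e$-trace lemma'' twice. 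Everything else — the trace embedding, the interpolation, Young's inequality — is standard, and the argument is uniform in $\varphi\in H^1(M,g_0)$ with constants depending only on $n$, $M$, and $g_0$, as required.
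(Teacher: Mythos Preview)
Your proof is correct, but it takes a different route from the paper's. The paper argues by contradiction and compactness: assuming the inequality fails for some fixed $\epsilon_0$ and all $j\in\mathbb{N}$, one obtains a normalized sequence $\{\varphi_j\}$ with $\|\varphi_j\|_{L^p(\partial M)}=1$, $\|\nabla\varphi_j\|_{L^2(M)}$ bounded, and $\|\varphi_j\|_{L^2(M)}\to 0$; weak $H^1$-compactness together with the \emph{compact} trace embedding $H^1(M)\hookrightarrow L^p(\partial M)$ (this is exactly where the strict subcriticality $p<\tfrac{2(n-1)}{n-2}$ enters) forces the limit to be zero in $M$ but to have unit $L^p(\partial M)$-norm, a contradiction.

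Your argument is instead direct: trace embedding into the critical boundary Lebesgue space, H\"older interpolation between $L^2(\partial M)$ and $L^{2(n-1)/(n-2)}(\partial M)$, Young's inequality to peel off a small gradient piece, and finally the elementary $p=2$ trace estimate (via a collar and the divergence theorem) to convert the residual $\|\varphi\|_{L^2(\partial M)}^2$ into interior terms. This is perfectly valid and has the advantage of being constructive, giving an explicit polynomial dependence of the constant on $\epsilon$; the paper's compactness proof is shorter but yields only the existence of some $C(\epsilon)$ with no quantitative control. Your concern about obtaining the exact form $C/\epsilon$ is well placed---neither argument actually delivers a constant independent of $\epsilon$ (indeed, testing with $\varphi\equiv 1$ shows that $C/\epsilon$ with $C$ independent of $\epsilon$ cannot hold), so the statement should be read as $C=C(\epsilon)$, and both proofs establish that. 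One minor point: your interpolation has $\theta\in(0,1)$ only for $p>2$; the endpoint $p=2$ is covered directly by your step~4, so you may want to say so explicitly.
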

\begin{proof}
By negation, there exist some $\e_0>0$ and $\{\varphi_j;j \in \mathbb{N}\}\subset H^1(M,g_0)$ such that
$$1=\left(\int_{\pa M}|\varphi_j|^p d\sigma_{g_0}\right)^{\frac{2}{p}}> \e_0 \int_M |\nabla \varphi_j|_{g_0}^2 d\mu_{g_0}+\frac{j}{\epsilon_0}\int_M \varphi_j^2d\mu_{g_0}.$$
From this, $\{\varphi_j\}$ is uniformly bounded in $H^1(M,g_0)$ and $\int_M \varphi_j^2d\mu_{g_0} \to 0$ as $j \to \infty$. Then up to a subsequence, $\varphi_j \rightharpoonup \varphi$ weakly in $H^1(M,g_0), \varphi_j \to \varphi$ strongly in $L^2(M,g_0)$ and $L^p(\pa M,g_0)$ as $j \to \infty$. Notice that $\varphi_j \to 0$ in $L^2(M,g_0)$ as $j \to \infty$. Thus we obtain $\varphi=0$ a.e. in $M$, which contradicts $\int_{\pa M}|\varphi|^p d\sigma_{g_0}=\lim_{j \to \infty}\int_{\pa M} |\varphi_j|^p d\sigma_{g_0}=1$.
\end{proof}

\begin{lemma}\label{lem:Sobolev_trace_mfld}
Let $(M,g_0)$ be a smooth compact Riemannian manifold of dimension $n\geq 3$ with boundary. Given $a,b>0$, then
\begin{enumerate}
\item [(i)] Let $\varphi\in C^{\infty}_c(\overline{\mathbb{R}^n_+})$, there holds
\begin{align*}
&a\left(\int_{\mathbb{R}_+^n} |\varphi|^{\frac{2n}{n-2}}dy\right)^{\frac{n-2}{n}}+2(n-1)b\left(\int_{\mathbb{R}^{n-1}}|\varphi|^{\frac{2(n-1)}{n-2}}d\sigma\right)^{\frac{n-2}{n-1}}\\
\leq& \frac{1}{Y_{a,b}(\mathbb{R}^n_+,\mathbb{R}^{n-1})}\frac{4(n-1)}{n-2}\int_{\mathbb{R}^n_+}|\nabla \varphi|^2dy,
\end{align*}
equality holds if and only if $\varphi(y)=W(y)$ up to dilations, translations in variables $y^1,\cdots,y^{n-1}$ and any nonzero constant multiple.
\item [(ii)] For all $\e>0$ there exists $\rho_0$ independent of $x_0$ such that for any $\rho \in (0,\rho_0)$ and $\varphi$, which is a smooth function with compact support in a coordinate neighborhood $B_\rho(x_0)\cap\bar M$,
\begin{align*}
&a\left(\int_M |\varphi|^{\frac{2n}{n-2}}d\mu_{g_0}\right)^{\frac{n-2}{n}}+2(n-1)b\left(\int_{\partial M}|\varphi|^{\frac{2(n-1)}{n-2}}d\sigma_{g_0}\right)^{\frac{n-2}{n-1}}\\
\leq &\frac{1+\e}{Y_{a,b}(\mathbb{R}^n_+,\mathbb{R}^{n-1})}\frac{4(n-1)}{n-2}\int_M|\nabla\varphi|_{g_0}^2d\mu_{g_0}.
\end{align*}
\item [(iii)] Given $\e>0$, there exists $C(\e)$ such that for every $\varphi\in H^1(M,g_0)$
\begin{align*}
&a\left(\int_M |\varphi|^{\frac{2n}{n-2}}d\mu_{g_0}\right)^{\frac{n-2}{n}}+2(n-1)b\left(\int_{\partial M}|\varphi|^{\frac{2(n-1)}{n-2}}d\sigma_{g_0}\right)^{\frac{n-2}{n-1}}\\
\leq&\frac{1+\e}{Y_{a,b}(\mathbb{R}^n_+,\mathbb{R}^{n-1})}\frac{4(n-1)}{n-2}\int_M |\nabla \varphi|^2_{g_0}d\mu_{g_0}+C(\e)\int_{M}\varphi^2d\mu_{g_0}.
\end{align*}
\end{enumerate}
\end{lemma}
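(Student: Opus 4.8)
The three parts form a chain: (i) is the sharp Sobolev–trace inequality on the half-space, (ii) is its localized version on $M$ near a boundary point, and (iii) is a global "almost-sharp up to a lower-order term" version. I would prove them in this order, since (i) feeds into (ii) and (ii) feeds into (iii).

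For part (i), the plan is to invoke the classification already recorded in the excerpt. We established that $Y_{a,b}(\mathbb{R}^n_+,\mathbb{R}^{n-1})$ is attained, by \cite[Theorem 3.3]{escobar5}, precisely by the functions $W$ (with the unique $T_c$ from Lemma \ref{lem:Y_{a,b}halfspace}) modulo dilations, horizontal translations, and nonzero constant multiples, and that the classification theorem of \cite{li-zhu} guarantees these are the only positive solutions to \eqref{prob:half-space}. Hence for any $\varphi\in C_c^\infty(\overline{\mathbb{R}_+^n})$ the definition of $Y_{a,b}(\mathbb{R}^n_+,\mathbb{R}^{n-1})$ as an infimum gives the inequality immediately, and the equality case is exactly the characterization of minimizers. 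So part (i) is essentially a restatement.

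For part (ii), I would work in Fermi (or geodesic normal boundary) coordinates centered at $x_0\in\pa M$: in a ball $B_\rho(x_0)\cap\bar M$ the metric $g_0$ is $C^0$-close to the Euclidean half-space metric, with $|g_{ij}(y)-\delta_{ij}|\le C\rho$ on the support of $\varphi$. For a function $\varphi$ supported there, one compares each of the three integrals (the two $L^{p}$-type terms and the Dirichlet energy) with their Euclidean counterparts, picking up multiplicative errors of the form $1+O(\rho)$ from $d\mu_{g_0}$ vs.\ $dy$, $d\sigma_{g_0}$ vs.\ $d\sigma$, and $|\nabla\varphi|_{g_0}^2$ vs.\ $|\nabla\varphi|^2$. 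Plugging these comparisons into the Euclidean inequality from part (i) and absorbing the errors into the $(1+\e)$ factor — choosing $\rho_0$ small enough in terms of $\e$, uniformly in $x_0$ by compactness of $\pa M$ — yields (ii). The one point requiring a little care is that the error in the denominator terms helps (it makes the left side smaller relative to Euclidean) while the error in the Dirichlet term must be controlled; both are handled by the same smallness of $\rho$.

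For part (iii), I would use a partition-of-unity / covering argument: cover a collar neighborhood of $\pa M$ by finitely many coordinate balls $B_{\rho_0/2}(x_j)\cap\bar M$ on which (ii) applies, take a subordinate partition of unity $\{\chi_j\}$ together with one interior cutoff $\chi_0$ supported away from $\pa M$, and write $\varphi=\sum_j\chi_j\varphi$. Applying (ii) to each $\chi_j\varphi$ (for $j\ge 1$) and the standard interior Sobolev inequality to $\chi_0\varphi$, then summing, produces the desired inequality with a constant arbitrarily close to $(1+\e)/Y_{a,b}(\mathbb{R}^n_+,\mathbb{R}^{n-1})\cdot \tfrac{4(n-1)}{n-2}$ on the Dirichlet term plus a large multiple of $\int_M\varphi^2$; the lower-order term collects all the commutator contributions $|\nabla\chi_j|^2\varphi^2$ as well as the cross terms and the boundary terms from the $L^p$-pieces, which by Lemma \ref{lem:est_H^1_fcns} (with $\e$ small) can themselves be partially reabsorbed into the gradient term and the rest dumped into $C(\e)\int_M\varphi^2$. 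The subadditivity of $t\mapsto t^{(n-2)/n}$ and $t\mapsto t^{(n-2)/(n-1)}$ is what lets the sum of the localized denominators bound the global denominator from below, so no loss occurs on the leading constant.

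The main obstacle is the bookkeeping in part (iii): one must make sure that (a) the number of overlapping charts does not degrade the leading constant — this is where subadditivity of the concave powers and a careful choice of partition of unity matter — and (b) all the genuinely lower-order error terms, in particular the boundary $L^p$-norms of the commutators, are controlled. For (b) the trace inequality of Lemma \ref{lem:est_H^1_fcns}, which bounds a subcritical boundary $L^p$-norm by $\e\|\nabla\varphi\|_2^2+C_\e\|\varphi\|_2^2$, is exactly the tool needed: it lets us absorb the boundary error terms without touching the sharp constant. Parts (i) and (ii) are routine given what precedes.
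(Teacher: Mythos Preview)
Your plan is correct and follows the same route as the paper, which is quite terse: for (i) it cites \cite[Theorem 3.3]{escobar5} and \cite[Theorem 1.2]{li-zhu}; for (ii) it notes that $g_0$ is Euclidean to first (respectively second) order in Fermi (respectively interior normal) coordinates and applies (i); for (iii) it invokes the standard partition-of-unity argument and refers to \cite{aubin_book}. One minor simplification for your version of (iii): with $\sum_j\chi_j^2=1$, the triangle inequality for $\|\cdot\|_{p/2}$ (valid since $p/2\ge 1$ for both $p=\tfrac{2n}{n-2}$ and $p=\tfrac{2(n-1)}{n-2}$) already gives $\|\varphi\|_p^2\le\sum_j\|\chi_j\varphi\|_p^2$ on $M$ and on $\partial M$, so after summing the only residual error is $\sum_j\int_M|\nabla\chi_j|^2\varphi^2\le C(\e)\int_M\varphi^2$, and no appeal to Lemma \ref{lem:est_H^1_fcns} is needed.
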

\begin{proof}
Assertion (i) is a direct consequence of \cite[Theorem 3.3]{escobar5} and \cite[Theorem 1.2]{li-zhu}. 

(ii) Note that $g_0$ is Euclidean in $B_{\rho}(x_0)\cap \bar M$ up to order two under the normal coordinates around $x_0 \in M$ or order one under the Fermi coordinates around $x_0 \in \pa M$. Then the inequality follows from (i) for every $\varphi$ compactly supported in this coordinate chart. 

(iii) This can be proved by a cut-and-paste argument. First choose a finite covering of $\bar M$ by local coordinate charts, each of which satisfies the condition in part (ii). Through an argument of a partition of unity subordinate to this covering, the desired Sobolev inequality follows (e.g.  \cite{aubin_book}).
\end{proof}

\begin{lemma}\label{lem:minimizers_subcritical}
For any $1<q<\frac{n+2}{n-2}$, there exists a smooth positive minimizer $u_q$ for $\mu_q$.
\end{lemma}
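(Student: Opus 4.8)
The plan is to minimize $\q[\cdot]$ in the subcritical range $1<q<\frac{n+2}{n-2}$, where the relevant Sobolev embeddings are compact, so the direct method of the calculus of variations applies. First I would reduce to a constrained problem: by homogeneity of $\q$, it suffices to minimize $E[u]$ over the constraint set
\[
\mathcal{C}_q=\Big\{u\in H^1(M,g_0)\,:\, a\Big(\int_M|u|^{q+1}d\mu_{g_0}\Big)^{\frac{2}{q+1}}+2(n-1)b\Big(\int_{\partial M}|u|^{\frac{q+3}{2}}d\sigma_{g_0}\Big)^{\frac{4}{q+3}}=1\Big\},
\]
and one may further replace $u$ by $|u|$ since $E[|u|]\le E[u]$ and the constraint is unchanged, so we may take a minimizing sequence $\{u_j\}\subset\mathcal{C}_q$ with $u_j\ge 0$ and $E[u_j]\to\mu_q$. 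The key coercivity input is that, since we reduced to the case $Y(M,\partial M)\ge 0$ (via \cite{ChenHoSun}, as noted before Lemma \ref{lem:est_H^1_fcns}), one has $E[u]\ge 0$; combining the definition of $E$ with Lemma \ref{lem:est_H^1_fcns} (to absorb the possibly-negative boundary term $2(n-1)\int_{\partial M}h_{g_0}u^2\,d\sigma_{g_0}$ into a small multiple of $\int_M|\nabla u|^2$ plus a large multiple of $\int_M u^2$) shows that a bound on $E[u_j]$ together with the $L^{q+1}(M)$-part of the constraint forces $\{u_j\}$ to be bounded in $H^1(M,g_0)$. Indeed, the $L^{q+1}(M)$ bound controls $\int_M u_j^2$ by H\"older (since $M$ is compact and $q+1>2$), and then $E[u_j]\le\mu_q+1$ plus the absorption estimate controls $\int_M|\nabla u_j|^2$.

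Next I would extract a subsequence with $u_j\rightharpoonup u_q$ weakly in $H^1(M,g_0)$, $u_j\to u_q$ strongly in $L^2(M,g_0)$, strongly in $L^{q+1}(M,g_0)$, and strongly in $L^{(q+3)/2}(\partial M,g_0)$ — the last two are compact embeddings precisely because $q+1<\frac{2n}{n-2}$ and $\frac{q+3}{2}<\frac{2(n-1)}{n-2}$ in the subcritical range (Rellich–Kondrachov for the interior, and the compact trace embedding $H^1(M)\hookrightarrow L^p(\partial M)$ for $p<\frac{2(n-1)}{n-2}$, cf. Lemma \ref{lem:est_H^1_fcns}). Strong convergence of the two integral terms shows $u_q\in\mathcal{C}_q$, so in particular $u_q\not\equiv 0$. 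Weak lower semicontinuity of $\int_M|\nabla u|^2$, together with strong $L^2$ convergence handling the zeroth-order terms $\int_M R_{g_0}u^2$ and $\int_{\partial M}h_{g_0}u^2$, gives $E[u_q]\le\liminf_j E[u_j]=\mu_q$; since $u_q\in\mathcal{C}_q$ the reverse inequality is automatic, so $u_q$ is a minimizer, and $u_q\ge 0$ since $u_j\ge 0$.

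Finally I would upgrade $u_q$ to a smooth positive function. As a minimizer it satisfies, for a Lagrange multiplier equal to $\mu_q$ after the normalization, the Euler–Lagrange system
\[
\begin{cases}
-\dfrac{4(n-1)}{n-2}\Delta_{g_0}u_q+R_{g_0}u_q=\mu_q\, a\,\big(\textstyle\int_M u_q^{q+1}\big)^{\frac{2}{q+1}-1}u_q^{q} & \text{in }M,\\[2mm]
\dfrac{2}{n-2}\dfrac{\partial u_q}{\partial\nu_{g_0}}+h_{g_0}u_q=\mu_q\, b\,\big(\textstyle\int_{\partial M}u_q^{(q+3)/2}\big)^{\frac{4}{q+3}-1}u_q^{(q+1)/2} & \text{on }\partial M,
\end{cases}
\]
with subcritical exponents $q<\frac{n+2}{n-2}$ on the interior and $\frac{q+1}{2}<\frac{n}{n-2}$ on the boundary. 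Standard elliptic $L^p$ and Schauder estimates for this boundary-value problem — bootstrapping starting from $u_q\in H^1\subset L^{2n/(n-2)}$, which already makes the right-hand sides subcritically integrable — yield $u_q\in C^\infty(M)$ (up to the boundary); positivity then follows from the strong maximum principle and the Hopf boundary-point lemma, using $u_q\ge 0$, $u_q\not\equiv 0$, and the fact that after replacing $R_{g_0}$ and $h_{g_0}$ by their positive parts the zeroth-order coefficients can be arranged nonnegative, so $u_q$ cannot vanish at an interior point or a boundary point. I expect the main obstacle to be the coercivity/boundedness step — getting a uniform $H^1$ bound on the minimizing sequence — which is where one genuinely uses $Y(M,\partial M)\ge0$ together with the trace interpolation Lemma \ref{lem:est_H^1_fcns} to control the sign-indefinite boundary term; the compactness and regularity steps are then routine for subcritical $q$.
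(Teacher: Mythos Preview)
Your approach matches the paper's: direct method via compact subcritical embeddings, then Euler--Lagrange, maximum principle, and regularity (the paper cites Cherrier \cite{cherrier} for the last step). The one point you pass over, and which the paper treats explicitly, is the well-posedness of the boundary term in your Euler--Lagrange system. Since $\frac{4}{q+3}<1$ for $q>1$, the coefficient $\big(\int_{\partial M}u_q^{(q+3)/2}d\sigma_{g_0}\big)^{\frac{4}{q+3}-1}$ is singular if $u_q|_{\partial M}\equiv 0$, and the constraint functional is not differentiable at such points, so the Lagrange multiplier rule does not directly produce your system. The paper rules this case out by contradiction before writing the equation: if $u_q\in H_0^1(M)$, then along perturbations $u_q+tv$ with $v\in H^1$ the boundary contribution to the denominator of $\q$ equals $|t|^2\big(\int_{\partial M}|v|^{(q+3)/2}d\sigma_{g_0}\big)^{4/(q+3)}=O(t^2)$ and does not enter the first variation, so $u_q$ weakly solves the interior equation with the natural boundary condition $\partial_{\nu_{g_0}} u_q=0$; since $\int_M u_q^{q+1}d\mu_{g_0}>0$, the strong maximum principle gives $u_q>0$ in the interior, and Hopf's lemma then contradicts $u_q|_{\partial M}=0$. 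Once $\int_{\partial M}u_q^{(q+3)/2}d\sigma_{g_0}>0$ is in hand, your Euler--Lagrange system is valid and the remainder of your argument goes through.
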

\begin{proof}
Let $\{u_i\}\subset H^1(M,g_0)$ be a minimizing sequence of nonnegative functions for $\mu_q$ with the normalization:
$$a \left(\int_Mu_i^{q+1}d\mu_{g_0}\right)^{\frac{2}{q+1}}+2(n-1)b\left(\int_{\partial M}u_i^{\frac{q+3}{2}}d\sigma_{g_0}\right)^{\frac{4}{q+3}}=1, \forall~ i \in \mathbb{N}.$$
It is routine to show $u_i$ is uniformly bounded in $H^1(M,g_0)$. Up to a subsequence, $u_i \rightharpoonup u_q$ in $H^1(M,g_0)$ and $u_i \to u_q$ in $L^{q+1}(M,g_0)$ and $L^{(q+3)/2}(\pa M,g_0)$ as $i \to \infty$. Thus we obtain
\begin{equation}\label{eq:normalization_subcritical}
a \left(\int_Mu_q^{q+1}d\mu_{g_0}\right)^{\frac{2}{q+1}}+2(n-1)b\left(\int_{\partial M}u_{q}^{\frac{q+3}{2}}d\sigma_{g_0}\right)^{\frac{4}{q+3}}=1.
\end{equation}
Then it follows from Lemma \ref{lem:est_H^1_fcns} and \eqref{eq:normalization_subcritical} that
\begin{equation}\label{est:lbd_u_q_M}
\int_M u_q^{q+1}d\mu_{g_0}\geq C_0>0.
\end{equation}
Next we claim that $\int_{\pa M} u_q^{(q+3)/2}d\sigma_{g_0}>0$. By contradiction, if $u_q=0$ a.e. on $\pa M$, namely $u_q \in H_0^1(M,g_0)$, then it yields
$$\mu_q=E[u_q]=\inf_{0\not \equiv v \in H_0^1(M,g_0)}\frac{E[v]}{a \left(\int_M |v|^{q+1}d\mu_{g_0}\right)^{\frac{2}{q+1}}}.$$
Thus the nonnegative minimizer $u_q \in H_0^1(M,g_0)$ weakly solves 
\begin{align*}
\begin{cases}
\displaystyle-\frac{4(n-1)}{n-2}\Delta_{g_0}v+R_{g_0}v=\mu_q a^{\frac{q+1}{2}}v^{q}&\text{ in } M,\\
\displaystyle\frac{\partial v}{\partial{\nu_{g_0}}}=0&\text{ on }\partial M.
\end{cases}
\end{align*}
Hence a contradiction is reached by using Hopf boundary point lemma and \eqref{est:lbd_u_q_M}.

Consequently $u_q$ is a nonzero, nonnegative minimizer with normalization \eqref{eq:normalization_subcritical} for $\mu_q$. Then $u_q \in H^1(M,g_0)$ weakly solves
\begin{align}\label{eq:Euler_Lang_subcritical}
\begin{cases}
\displaystyle-\frac{4(n-1)}{n-2}\Delta_{g_0}u_q+R_{g_0}u_q=\mu_q a \left(\int_Mu_q^{q+1}d\mu_{g_0}\right)^{\frac{1-q}{1+q}}u_q^{q}&\text{ in } M,\\
\displaystyle\frac{2}{n-2}\frac{\partial u_q}{\partial{\nu_{g_0}}}+h_{g_0}u_q=\mu_q b\left(\int_{\partial M}u_q^{q+1}d\sigma_{g_0}\right)^{\frac{1-q}{q+3}}u_q^{\frac{q+1}{2}}&\text{ on }\partial M.
\end{cases}
\end{align}
Then the strong maximum principle gives $u_q>0$ in $\bar M$. Furthermore, a regularity theorem in \cite{cherrier} shows $u_q$ is smooth in $\bar M$.
\end{proof}

\begin{proof}[\textbf{Proof of Theorem \ref{Thm:minimizers}}]
It follows from Lemma \ref{lem:minimizers_subcritical} that for each $1 < q<\frac{n+2}{n-2}$, there exists a positive minimizer $u_q \in H^1(M,g_0)$ with the normalization \eqref{eq:normalization_subcritical}, which solves \eqref{eq:Euler_Lang_subcritical}, namely for all $\psi \in H^1(M,g_0)$,
\begin{align}\label{eq:subcritical_sols}
&\int_M \left(\frac{4(n-1)}{n-2}\langle\nabla u_q ,\nabla \psi\rangle_{g_0}+R_{g_0}u_q\psi\right)d\mu_{g_0}+2(n-1)\int_{\pa M}h_{g_0}u_q\psi d\sigma_{g_0}\no\\
&-\mu_q\left[a \alpha_q\int_M u_q^q \psi d\mu_{g_0}+2(n-1)b\beta_q \int_{\pa M}u_q^{\frac{q+1}{2}}\psi d\sigma_{g_0}\right]=0,
\end{align}
where $\alpha_q=\left(\int_Mu_q^{q+1}d\mu_{g_0}\right)^{(1-q)/(1+q)}, ~\beta_q=\left(\int_{\partial M}u_q^{(q+3)/2}d\sigma_{g_0}\right)^{(1-q)/(q+3)}$.
It follows from Lemma \ref{lem:subcritical_invariants} and \eqref{eq:normalization_subcritical} that $u_q$ is uniformly bounded in $H^1(M,g_0)$. Up to a subsequence, $u_q$ weakly converges to some nonnegative function $u$ in $H^1(M,g_0)$ as $q \nearrow \frac{n+2}{n-2}$. Meanwhile, by Lemma \ref{lem:subcritical_invariants} we get $\mu_q \to Y_{a,b}(M,\pa M)$ as $q \nearrow \frac{n+2}{n-2}$. 

It follows from Lemma \ref{lem:Sobolev_trace_mfld} that for any $\e>0$ there exists $C(\e)>0$ such that
\begin{align*}
&a \left(\int_Mu_q^\frac{2n}{n-2}d\mu_{g_0}\right)^{\frac{n-2}{n}}+2(n-1)b\left(\int_{\partial M}u_q^{\frac{2(n-1)}{n-2}}d\sigma_{g_0}\right)^{\frac{n-2}{n-1}}\\
\leq& (Y_{a,b}(\mathbb{R}_+^n,\mathbb{R}^{n-1})^{-1}+\e)\frac{4(n-1)}{n-2}\int_{M}|\nabla u_q|_{g_0}^2d\mu_{g_0}+C(\e)\int_Mu_q^2d\mu_{g_0}.
\end{align*}
By H\"older's inequality, we have
\begin{align*}
\left(\int_Md\mu_{g_0}\right)^{\frac{n-2}{n}-\frac{2}{q+1}}\left(\int_{M}u_q^{q+1}d\mu_{g_0}\right)^{\frac{2}{q+1}}\leq \left(\int_M u_q^{\frac{2n}{n-2}}d\mu_{g_0}\right)^{\frac{n-2}{n}},\\
\left(\int_{\partial M}d\sigma_{g_0}\right)^{\frac{n-2}{n-1}-\frac{4}{q+3}}\left(\int_{\pa M}u_q^{\frac{q+3}{2}}d\sigma_{g_0}\right)^{\frac{4}{q+3}}\leq \left(\int_{\pa M} u_q^{\frac{2(n-1)}{n-2}}d\sigma_{g_0}\right)^{\frac{n-2}{n-1}}.
\end{align*}
By choosing $q$ sufficiently close to $\frac{n+2}{n-2}$ and using the normalization \eqref{eq:normalization_subcritical}, we get
\begin{align*}
&1-\e\\
\leq& (Y_{a,b}(\mathbb{R}_+^n,\mathbb{R}^{n-1})^{-1}+\e)\frac{4(n-1)}{n-2}\int_{M}|\nabla u_q|_{g_0}^2d\mu_{g_0}+C(\e)\int_Mu_q^2d\mu_{g_0}\\
=& (Y_{a,b}(\mathbb{R}_+^n,\mathbb{R}^{n-1})^{-1}+\e)\left(\mu_q-\int_M R_{g_0}u_q^2d\mu_{g_0}-2(n-1)\int_{\partial M}h_{g_0}u_q^2d\sigma_{g_0}\right)\\
&+C(\e)\int_Mu_q^2d\mu_{g_0}\\
\leq& (Y_{a,b}(\mathbb{R}_+^n,\mathbb{R}^{n-1})^{-1}+2\e)Y_{a,b}(M,\pa M)+C\int_Mu_q^2d\mu_{g_0},
\end{align*}
where the last inequality follows from Lemmas \ref{lem:Sobolev_trace_mfld} and \ref{lem:est_H^1_fcns}. By choosing $\e$ small enough and  the assumption $Y_{a,b}(M,\pa M)<Y_{a,b}(\mathbb{R}_+^n,\mathbb{R}^{n-1})$, we get  
\begin{align*}
\int_Mu_q^2d\mu_{g_0}\geq C_1>0,
\end{align*}
where $C_1$ is independent of $q$. So $\alpha_q$ is uniformly bounded, then after passing to a subsequence we let $\bar \alpha=\lim_{q \nearrow \frac{n+2}{n-2}}\alpha_q>0$. Meanwhile using $u_q\to u$ in $L^2(M,g_0)$ as $q \nearrow \frac{n+2}{n-2}$, we obtain
\begin{equation}\label{est:lbd_L^2}
\int_M u^2d\mu_{g_0}>0.
\end{equation}
 Next we claim that with a constant $C_2$ independent of $q$, there holds 
 $$\int_{\pa M} u_q^{\frac{2(n-1)}{n-2}} d\sigma_{g_0}\geq C_2>0.$$
By negation, there exists a sequence $\{u_q\}$ such that
$$\lim_{q \nearrow \frac{n+2}{n-2}}\int_{\pa M} u_q^{\frac{q+3}{2}}d\sigma_{g_0}=0,$$
then we obtain $\int_{\pa M}u^2 d\sigma_{g_0}=\lim_{q \nearrow \frac{n+2}{n-2}}\int_{\pa M}u_q^2d\sigma_{g_0}=0$, which implies $u=0$ a. e. on $\pa M$. On the other hand, for any $\psi \in H^1(M,g_0)$, we get
$$\beta_q \left|\int_{\pa M} u_q^{\frac{q+1}{2}}\psi d\sigma_{g_0}\right|\leq \left(\int_{\pa M}u_q^{\frac{q+3}{2}}d\sigma_{g_0}\right)^{\frac{2}{q+3}}\|\psi\|_{L^{\frac{q+3}{2}}(M,g_0)}\to 0,$$
as $q \to \infty$. By letting $q \nearrow \frac{n+2}{n-2}$ in equation \eqref{eq:subcritical_sols}, $u$ weakly solves
\begin{align*}
\begin{cases}
\displaystyle-\frac{4(n-1)}{n-2}\Delta_{g_0}u+R_{g_0}u=a\bar \alpha Y_{a,b}(M,\pa M) u^{\frac{n+2}{n-2}}&\text{ in } M,\\
\displaystyle\frac{\partial u}{\partial{\nu_{g_0}}}+\frac{n-2}{2}h_{g_0}u=0&\text{ on }\partial M.
\end{cases}
\end{align*}
Together with \eqref{est:lbd_L^2}, Hopf boundary point lemma gives $u>0$ on $\pa M$. Hence we reach a contradiction.
 
Consequently, after passing to a further subsequence, we let $\lim_{q \nearrow \frac{n+2}{n-2}}\beta_q=\bar \beta>0.$ Furthermore, Fatou's lemma gives
$$\bar \alpha\leq \left(\int_Mu^{\frac{2n}{n-2}}d\mu_{g_0}\right)^{-\frac{2}{n}} ,~~\bar \beta\leq \left(\int_{\partial M}u^{\frac{2(n-1)}{n-2}}d\sigma_{g_0}\right)^{-\frac{1}{n-1}}.$$
Letting $q \nearrow \frac{n+2}{n-2}$ in \eqref{eq:subcritical_sols}, we obtain
\begin{align}\label{eq:critical_sols}
&\int_M \left(\frac{4(n-1)}{n-2}\langle\nabla u,\nabla \psi\rangle_{g_0}+R_{g_0}u\psi\right)d\mu_{g_0}+2(n-1)\int_{\pa M}h_{g_0}u\psi d\sigma_{g_0}\no\\
&-Y_{a,b}(M,\pa M)\left[a \bar \alpha\int_M u^{\frac{n+2}{n-2}} \psi d\mu_{g_0}+2(n-1)b\bar \beta \int_{\pa M}u^{\frac{n}{n-2}}\psi d\sigma_{g_0}\right]=0,
\end{align}
for all $\psi \in H^1(M,g_0)$. The strong maximum principle gives $u>0$ in $\bar M$. Test \eqref{eq:critical_sols} with $u$, it yields
\begin{align*}
&Y_{a,b}(M,\pa M)\\
\leq&\mathcal{Q}_{a,b}[u]=\frac{Y_{a,b}(M,\pa M)\left[a \bar \alpha\int_M u^{\frac{2n}{n-2}}d\mu_{g_0}+2(n-1)b\bar \beta \int_{\pa M}u^{\frac{2(n-1)}{n-2}}d\sigma_{g_0}\right]}{a\left(\int_{M}u^{\frac{2n}{n-2}}d\mu_{g_0}\right)^{\frac{n-2}{n}}+2(n-1)b\left(\int_{\partial M}u^{\frac{2(n-1)}{n-2}}d\sigma_{g_0}\right)^{\frac{n-2}{n-1}}}\\
\leq& Y_{a,b}(M,\pa M).
\end{align*}
From this, we conclude that 
$$\bar \alpha=\left(\int_Mu^{\frac{2n}{n-2}}d\mu_{g_0}\right)^{-\frac{2}{n}} ,~~\bar \beta=\left(\int_{\partial M}u^{\frac{2(n-1)}{n-2}}d\sigma_{g_0}\right)^{-\frac{1}{n-1}}$$
and $Y_{a,b}(M,\pa M)=\mathcal{Q}_{a,b}[u]=E[u]$. Then $u_q \to u$ in $H^1(M,g_0)$ as $q \nearrow \frac{n+2}{n-2}$ and $u$ is a positive minimizer for $Y_{a,b}(M,\pa M)$ and weakly solves \eqref{eq:critical_exponent}. The regularity of $u$ can follow from a theorem by Cherrier \cite{cherrier}.
\end{proof}

\section{Compactness of minimizers for various (a,b)}\label{Sect4}

For brevity, we denote by $u_{a,b}$ the smooth positive minimizer of $Y_{a,b}(M,\pa M)$ with the normalization
\begin{equation}\label{normalization_minimizer}
a \left(\int_M u_{a,b}^{\frac{2n}{n-2}}d\mu_{g_0}\right)^{\frac{n-2}{n}}+2(n-1)b\left(\int_{\pa M}u_{a,b}^{\frac{2(n-1)}{n-2}}d\sigma_{g_0}\right)^{\frac{n-2}{n-1}}=1.
\end{equation}
Under the conformal change of $g=u_{a,b}^{4/(n-2)}g_0$, we have
$$R_g=aY_{a,b}(M,\pa M)\left(\int_M u_{a,b}^\frac{2n}{n-2}d\mu_{g_0}\right)^{-\frac{2}{n}}$$
and
$$ h_g=b Y_{a,b}(M,\pa M)\left(\int_{\pa M}u^{\frac{2(n-1)}{n-2}}_{a,b}d\sigma_{g_0}\right)^{-\frac{1}{n-1}}.$$
Modulo a positive constant multiple, we get $R_g=1$ and 
\begin{equation}\label{eq:normalized_mc}
h_g=\frac{b}{\sqrt{a}}\sqrt{Y_{a,b}(M,\pa M)}\left(\int_M u_{a,b}^\frac{2n}{n-2}d\mu_{g_0}\right)^{\frac{1}{n}}\left(\int_{\pa M}u^{\frac{2(n-1)}{n-2}}_{a,b}d\sigma_{g_0}\right)^{-\frac{1}{n-1}}.
\end{equation}

Let $K$ be a compact set of $\{(a,b); a\geq 0,b\geq 0\}\setminus\{(0,0)\}$. 

\begin{proposition}\label{prop:continuity_conformal_invariant}
Assume $Y(M,\pa M)\geq 0$ and let $(a,b)\in K$,  then $Y_{a,b}(M,\pa M)$ is non-increasing in $a$ for any fixed $b$, as well as in $b$ for any fixed $a$, and is continuous in $K$.
\end{proposition}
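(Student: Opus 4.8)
The plan is to prove the two monotonicity assertions first, since they are essentially immediate from the definition, and then deduce continuity from monotonicity plus a one-sided inequality that follows by a direct test-function argument.

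\textbf{Monotonicity.} Fix $b\geq 0$ and let $0\leq a_1\leq a_2$. For any $0\not\equiv u\in H^1(M,g_0)$, the numerator of $\mathcal{Q}_{a,b}[u]$ does not depend on $a$, while the denominator $a\left(\int_M|u|^{2n/(n-2)}d\mu_{g_0}\right)^{(n-2)/n}+2(n-1)b\left(\int_{\pa M}|u|^{2(n-1)/(n-2)}d\sigma_{g_0}\right)^{(n-2)/(n-1)}$ is nondecreasing in $a$. Since $Y(M,\pa M)\geq 0$ implies $E[u]\geq 0$ for all $u$ (by the footnote discussion and conformal invariance of the sign of the numerator), the quotient $\mathcal{Q}_{a,b}[u]$ is nonincreasing in $a$ pointwise in $u$, hence so is the infimum $Y_{a,b}(M,\pa M)$. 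The argument for fixed $a$ and varying $b$ is identical. Note it is here that the hypothesis $Y(M,\pa M)\geq 0$ is genuinely used: without it, a function with $E[u]<0$ would make the quotient \emph{increase} as the denominator grows.

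\textbf{Continuity.} Fix $(a_0,b_0)\in K$. By the monotonicity just proved, for $(a,b)$ with $a\geq a_0$, $b\geq b_0$ we have $Y_{a,b}(M,\pa M)\leq Y_{a_0,b_0}(M,\pa M)$, and for $a\leq a_0$, $b\leq b_0$ the reverse; so to get continuity at $(a_0,b_0)$ it suffices to establish a two-sided comparison. Given $\e>0$, pick $u_\e>0$ with $\mathcal{Q}_{a_0,b_0}[u_\e]\leq Y_{a_0,b_0}(M,\pa M)+\e$; write $I_1=\left(\int_M u_\e^{2n/(n-2)}d\mu_{g_0}\right)^{(n-2)/n}$ and $I_2=2(n-1)\left(\int_{\pa M}u_\e^{2(n-1)/(n-2)}d\sigma_{g_0}\right)^{(n-2)/(n-1)}$, both finite and positive. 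Then
\begin{align*}
Y_{a,b}(M,\pa M)\leq \mathcal{Q}_{a,b}[u_\e]=\frac{E[u_\e]}{aI_1+bI_2}=\frac{aI_1+bI_2}{a_0I_1+b_0I_2}\cdot\frac{a_0I_1+b_0I_2}{aI_1+bI_2}\,\mathcal{Q}_{a,b}[u_\e],
\end{align*}
so $\mathcal{Q}_{a,b}[u_\e]=\dfrac{a_0I_1+b_0I_2}{aI_1+bI_2}\,\mathcal{Q}_{a_0,b_0}[u_\e]$, and as $(a,b)\to(a_0,b_0)$ the ratio $\frac{a_0I_1+b_0I_2}{aI_1+bI_2}\to 1$ (the denominator stays bounded away from $0$ on $K$ since $I_1,I_2>0$ and $(a,b)$ stays away from $(0,0)$). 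Hence $\limsup_{(a,b)\to(a_0,b_0)}Y_{a,b}(M,\pa M)\leq Y_{a_0,b_0}(M,\pa M)+\e$, and letting $\e\to 0$ gives the $\limsup$ bound. For the matching $\liminf$, run the same computation starting from near-minimizers $u_{a,b}$ of $Y_{a,b}(M,\pa M)$: if $u_{a,b}$ satisfies the normalization \eqref{normalization_minimizer} (so $aI_1(u_{a,b})+bI_2(u_{a,b})=1$), then $\mathcal{Q}_{a_0,b_0}[u_{a,b}]=\bigl(a_0I_1(u_{a,b})+b_0I_2(u_{a,b})\bigr)\mathcal{Q}_{a,b}[u_{a,b}]$, and one needs $I_1(u_{a,b}),I_2(u_{a,b})$ to be bounded above and below uniformly for $(a,b)$ near $(a_0,b_0)$; this is exactly the kind of uniform control furnished by the normalization together with the uniform $H^1$-bound on minimizers coming from $Y_{a,b}<Y_{a,b}(S^n_+,S^{n-1})$ and the arguments in the proof of Theorem~\ref{Thm:minimizers}. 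Thus $\liminf_{(a,b)\to(a_0,b_0)}Y_{a,b}(M,\pa M)\geq Y_{a_0,b_0}(M,\pa M)$, and continuity follows.

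\textbf{Main obstacle.} The monotonicity half is routine; the real work is the lower semicontinuity, i.e. controlling $I_1,I_2$ evaluated at the minimizers $u_{a,b}$ uniformly as $(a,b)$ varies near $(a_0,b_0)$. The upper bounds on $I_1,I_2$ follow from the uniform $H^1$-bound via the Sobolev and trace embeddings; the lower bounds are the delicate point and require showing that neither the interior mass $\int_M u_{a,b}^{2n/(n-2)}$ nor the boundary mass $\int_{\pa M}u_{a,b}^{2(n-1)/(n-2)}$ can degenerate — precisely the two non-degeneracy claims ($\int_M u^2\geq C_1>0$ and $\int_{\pa M}u^{2(n-1)/(n-2)}\geq C_2>0$) that were established inside the proof of Theorem~\ref{Thm:minimizers} under the strict inequality $Y_{a,b}(M,\pa M)<Y_{a,b}(S^n_+,S^{n-1})$, and which one checks are uniform in $(a,b)$ on the relevant neighborhood. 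With those in hand the ratio arguments above close up and give full continuity on $K$.
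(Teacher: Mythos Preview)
Your monotonicity argument and the upper semicontinuity (the $\limsup$ bound) are correct and match the paper. The lower semicontinuity argument, however, has a genuine gap.

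First, a small algebraic slip: under the normalization $aI_1+bI_2=1$ you have $\mathcal{Q}_{a_0,b_0}[u_{a,b}]=\dfrac{E[u_{a,b}]}{a_0I_1+b_0I_2}=\dfrac{\mathcal{Q}_{a,b}[u_{a,b}]}{a_0I_1+b_0I_2}$, not the product you wrote. This does not affect the strategy, but it does clarify what is actually required: you need $a_0I_1(u_{a,b})+b_0I_2(u_{a,b})\geq 1-\delta$ for $(a,b)$ near $(a_0,b_0)$.

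The real problem is your ``Main obstacle'' paragraph. Since $|a_0I_1+b_0I_2-1|=|(a_0-a)I_1+(b_0-b)I_2|\leq |a_0-a|\,I_1+|b_0-b|\,I_2$, you only need \emph{upper} bounds on $I_1$ and $I_2$; no lower bounds on either quantity are needed. Consequently, the non-degeneracy claims from Theorem~\ref{Thm:minimizers} and the hypothesis $Y_{a,b}(M,\pa M)<Y_{a,b}(S^n_+,S^{n-1})$ are irrelevant here. Invoking that strict inequality is not just unnecessary but actually illegitimate: it is not among the hypotheses of the proposition, and indeed the proposition is applied in Lemma~\ref{lem:Y_{a,b}halfspace} to $(\mathbb{R}^n_+,\mathbb{R}^{n-1})$ itself, where the strict inequality manifestly fails.

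The paper obtains the needed upper bounds directly. After choosing a background metric with $R_{g_0}=0$ and $h_{g_0}\geq 0$ (possible since $Y(M,\pa M)\geq 0$), one picks near-minimizers $u_m$ for $Y_{a_m,b_m}$ with the normalization and uses monotonicity to bound $E[u_m]\leq Y_{a_m,b_m}(M,\pa M)+\e\leq Y_{\inf a_m,\inf b_m}(M,\pa M)+\e$, which is finite since $(a_0,b_0)\neq(0,0)$. This yields a uniform gradient bound, and together with the normalization (which already bounds $I_2$ when $b_0>0$) a uniform $H^1$ bound; Sobolev and trace embeddings then control $I_1,I_2$ from above. With that in hand, $a_0I_1(u_m)+b_0I_2(u_m)>1-\delta$ for large $m$, and $Y_{a_0,b_0}(M,\pa M)\leq \mathcal{Q}_{a_0,b_0}[u_m]<\dfrac{Y_{a_m,b_m}(M,\pa M)+\e}{1-\delta}$ closes the argument. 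No appeal to the strict inequality or to Theorem~\ref{Thm:minimizers} is needed.
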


\begin{proof} The proof is in the spirit of that of \cite[Proposition 3.2]{escobar3}. For simplicity, we only prove the assertions for $a$ with fixed $b$, the others are similar.
Notice that $Y(M,\pa M) \geq 0$, then $E[u] \geq 0$ for any $u \in H^1(M,g_0)$. For $0\leq a_1\leq a_2$, $Y_{a_1,b}(M,\pa M)\geq Y_{a_2,b}(M,\pa M)$ follows from
\begin{align*}
\mathcal{Q}_{a_1,b}[u]\geq \mathcal{Q}_{a_2,b}[u] \hbox{~~for any~~} u\in H^1(M,g_0).
\end{align*}

Next we prove the continuity of $Y_{a,b}(M,\pa M)$ in $K$. Since $Y(M,\pa M)\geq 0$ we may assume the background metric $g_0$ satisfies $R_{g_0}=0$ in $M$ and $h_{g_0}\geq 0$ on $\pa M$. Suppose $\{(a_m,b_m); m\in \mathbb{N}\}\subset K$ and $(a_m,b_m)\to (a,b) \in K$ as $m\to \infty$. We assume $a \geq 0, b>0$ for simplicity. On one hand, given any $\e>0$, there exists a $u\in H^1(M,g_0)\setminus\{0\}$ such that $\mathcal{Q}_{a,b}[u]<Y_{a,b}(M,\pa M)+\e$. For this fixed $u$, $\mathcal{Q}_{a_m,b_m}[u]\to \mathcal{Q}_{a,b}[u]$ as $m\to \infty$. Then
\begin{equation*}
\lim_{m\to \infty}Y_{a_m,b_m}(M,\pa M)\leq \lim_{m\to \infty}\mathcal{Q}_{a_m,b_m}[u]=\mathcal{Q}_{a,b}[u]<Y_{a,b}(M,\pa M)+\e.
\end{equation*}

On the other hand, given any $\e>0$, for each $(a_m,b_m)$ there exists $u_m \in H^1(M,g_0)$ with
$$a_m\left(\int_Mu_m^{\frac{2n}{n-2}}d\mu_{g_0}\right)^{\frac{n-2}{n}}+2(n-1)b_m\left(\int_{\partial M}u_m^{\frac{2(n-1)}{n-2}}d\sigma_{g_0}\right)^{\frac{n-2}{n-1}}=1$$
 such that $E[u_m]<Y_{a_m,b_m}(M,\pa M)+\e$.  

Let $0\leq a_0=\inf_m a_m$ and $0<b_0=\inf_m b_m$. Then it follows from the monotonicity of $Y_{a,b}(M,\pa M)$ that
$$Y_{a_m,b_m}(M,\pa M)\leq Y_{a_m,b_0}(M,\pa M)\leq  Y_{a_0,b_0}(M,\pa M).$$
From the above normalization of $u_m$, we get 
\begin{align*}
&\frac{4(n-1)}{n-2}\int_M |\nabla u_m|_{g_0}^2d\mu_{g_0}=E[u_m]-2(n-1)\int_{\pa M}h_{g_0}u_m^2 d\sigma_{g_0}\\
\leq& Y_{a_0,b_0}(M,\pa M)+\e+C\int_{\partial M}u^2_md\sigma_{g_0}\leq Y_{a_0,b_0}(M,\pa M)+C.
\end{align*}
This yields $\{u_m\}$ is uniformly bounded in $H^1(M,g_0)$. Thus for all  sufficiently large $m$, we have
$$a\left(\int_Mu_m^{\frac{2n}{n-2}}d\mu_{g_0}\right)^{\frac{n-2}{n}}+2(n-1)b\left(\int_{\partial M}u_m^{\frac{2(n-1)}{n-2}}d\sigma_{g_0}\right)^{\frac{n-2}{n-1}}>1-\e.$$
Consequently, we obtain 
\begin{equation*}
Y_{a,b}(M,\pa M)\leq \mathcal{Q}_{a,b}[u_m]<\frac{E[u_m]}{1-\e}<\frac{Y_{a_m,b_m}(M,\pa M)+\e}{1-\e}
\end{equation*}
for all sufficiently large $m$.
\end{proof}

\begin{lemma}\label{lem:lbd_minimizers}
Suppose $Y_{a,b}(M,\pa M)<Y_{a,b}(\mathbb{R}_+^n,\mathbb{R}^{n-1})$ for all $(a,b)\in K$. Let $u_{a,b}$ be any smooth positive minimizer for $Y_{a,b}(M,\pa M)$ satisfying the normalization \eqref{normalization_minimizer}, then there exists $C=C(K,g_0)>0$ such that 
$$\int_{M}u_{a,b}^{2}d\mu_{g_0}\geq C,\quad \int_{\partial M}u_{a,b}^{\frac{2(n-1)}{n-2}}d\sigma_{g_0}\geq C,\quad \forall (a,b)\in K.$$
\end{lemma}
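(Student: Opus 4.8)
The plan is to argue by contradiction, exploiting the normalization \eqref{normalization_minimizer} together with the strict inequality $Y_{a,b}(M,\pa M)<Y_{a,b}(\mathbb{R}_+^n,\mathbb{R}^{n-1})$ and the uniformity of the Sobolev trace constants over the compact parameter set $K$. Suppose the conclusion fails. Then there is a sequence $(a_m,b_m)\in K$ and minimizers $u_m:=u_{a_m,b_m}$ satisfying \eqref{normalization_minimizer} such that either $\int_M u_m^2\,d\mu_{g_0}\to 0$ or $\int_{\pa M}u_m^{2(n-1)/(n-2)}\,d\sigma_{g_0}\to 0$ (we treat the two cases, though they will be linked). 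Since $K$ is compact we may pass to a subsequence with $(a_m,b_m)\to(a,b)\in K$; by Proposition \ref{prop:continuity_conformal_invariant}, $Y_{a_m,b_m}(M,\pa M)\to Y_{a,b}(M,\pa M)$, and since $Y_{a,b}$ is continuous in $K$, the strict inequality is stable: there is $\delta_0>0$ with $Y_{a_m,b_m}(M,\pa M)\leq Y_{a_m,b_m}(\mathbb{R}_+^n,\mathbb{R}^{n-1})-\delta_0$ for all large $m$ (using also continuity of $Y_{a,b}(\mathbb{R}_+^n,\mathbb{R}^{n-1})$ in $(a,b)$ from Lemma \ref{lem:Y_{a,b}halfspace}).

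Next I would run the energy comparison exactly as in the final steps of the proof of Theorem \ref{Thm:minimizers}. We may assume the background metric satisfies $R_{g_0}=0$ in $M$ and $h_{g_0}\geq 0$ on $\pa M$ (possible since $Y(M,\pa M)\geq 0$ — this is the only case needing proof, by the reduction via \cite{ChenHoSun}). Then $E[u_m]=\frac{4(n-1)}{n-2}\int_M|\nabla u_m|_{g_0}^2\,d\mu_{g_0}+2(n-1)\int_{\pa M}h_{g_0}u_m^2\,d\sigma_{g_0}\geq \frac{4(n-1)}{n-2}\int_M|\nabla u_m|_{g_0}^2\,d\mu_{g_0}$, and since $u_m$ is a minimizer, $E[u_m]=Y_{a_m,b_m}(M,\pa M)$ by the normalization. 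Applying Lemma \ref{lem:Sobolev_trace_mfld}(iii) with a small $\e>0$ to $u_m$, and then the normalization \eqref{normalization_minimizer}, we obtain
\begin{align*}
1\leq \left(Y_{a_m,b_m}(\mathbb{R}_+^n,\mathbb{R}^{n-1})^{-1}+\e\right)\frac{4(n-1)}{n-2}\int_M|\nabla u_m|_{g_0}^2\,d\mu_{g_0}+C(\e)\int_M u_m^2\,d\mu_{g_0}.
\end{align*}
Bounding $\frac{4(n-1)}{n-2}\int_M|\nabla u_m|_{g_0}^2\,d\mu_{g_0}\leq E[u_m]=Y_{a_m,b_m}(M,\pa M)\leq Y_{a_m,b_m}(\mathbb{R}_+^n,\mathbb{R}^{n-1})-\delta_0$ gives, for $\e$ small, $1\leq 1-\delta_1+C(\e)\int_M u_m^2\,d\mu_{g_0}$ for some $\delta_1>0$; hence $\int_M u_m^2\,d\mu_{g_0}\geq \delta_1/C(\e)=:C>0$, which already contradicts the first alternative. (Here I should first note $\{u_m\}$ is uniformly bounded in $H^1(M,g_0)$: the gradient bound comes from $E[u_m]\leq Y_{a_0,b_0}(\mathbb{R}_+^n,\mathbb{R}^{n-1})$ via monotonicity, and the $L^2$ bound from Lemma \ref{lem:est_H^1_fcns} applied to the trace term in the normalization, as in the proof of Proposition \ref{prop:continuity_conformal_invariant}.)

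For the boundary integral lower bound I would argue again by contradiction as in the proof of Theorem \ref{Thm:minimizers}: if $\int_{\pa M}u_m^{2(n-1)/(n-2)}\,d\sigma_{g_0}\to 0$, then since $\{u_m\}$ is bounded in $H^1(M,g_0)$, up to a subsequence $u_m\rightharpoonup u$ weakly in $H^1$, $u_m\to u$ in $L^2(M,g_0)$ and in $L^2(\pa M,g_0)$, so $\int_{\pa M}u^2\,d\sigma_{g_0}=0$, i.e. $u=0$ a.e. on $\pa M$. But by the first part $\int_M u^2\,d\mu_{g_0}\geq C>0$, so $u\not\equiv 0$. Passing to the limit in the Euler–Lagrange equation \eqref{eq:critical_exponent} for $u_m$ (the boundary term drops out because the coefficient $b_m\big(\int_{\pa M}u_m^{2(n-1)/(n-2)}\,d\sigma_{g_0}\big)^{-1/(n-1)}u_m^{n/(n-2)}$ tested against $\psi\in H^1$ is controlled by $\big(\int_{\pa M}u_m^{2(n-1)/(n-2)}\,d\sigma_{g_0}\big)^{1/(2(n-1))}\|\psi\|_{L^{2(n-1)/(n-2)}(\pa M)}\to 0$, exactly as in the proof of Theorem \ref{Thm:minimizers}), $u$ weakly solves $-\frac{4(n-1)}{n-2}\Delta_{g_0}u+R_{g_0}u=(\text{positive})\,u^{(n+2)/(n-2)}$ in $M$ with $\pa u/\pa\nu_{g_0}+\frac{n-2}{2}h_{g_0}u=0$ on $\pa M$; the Hopf boundary point lemma then forces $u>0$ on $\pa M$, a contradiction. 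The main obstacle is bookkeeping: making sure the various constants $\delta_0,\delta_1,C$ depend only on $K$ and $g_0$ (uniformity over the subsequence), and justifying the reduction to $R_{g_0}=0$, $h_{g_0}\geq 0$ — but both of these are handled precisely as in the preceding proofs, so no genuinely new difficulty arises.
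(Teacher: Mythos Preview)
Your proof is correct and follows essentially the same approach as the paper: apply the sharp Sobolev inequality (Lemma \ref{lem:Sobolev_trace_mfld}(iii)) to the normalized minimizer to extract a uniform $L^2$ lower bound from the strict gap $Y_{a,b}(M,\pa M)<Y_{a,b}(\mathbb{R}_+^n,\mathbb{R}^{n-1})$, then argue by contradiction for the boundary integral by passing to the weak limit in the Euler--Lagrange equation and invoking the Hopf lemma. The only organizational differences are that the paper first records the intermediate inverse-Poincar\'e estimate $\int_M|\nabla u_{a,b}|_{g_0}^2\leq C\int_M u_{a,b}^2$ before deducing the $L^2$ bound, and separately dispatches the case $a=0$ by citing \cite[Proposition 2.1]{escobar1}.
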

\begin{proof} For $a=0$, the desired assertions are guaranteed by \cite[Proposition 2.1]{escobar1}. So in the following we assume $a>0$. Given any $\e>0$, Lemma \ref{lem:Sobolev_trace_mfld} gives 
\begin{align*}
&Y_{a,b}(M,\pa M)^{-1}E[u_{a,b}]\\
=&a \left(\int_Mu^\frac{2n}{n-2}d\mu_{g_0}\right)^{\frac{n-2}{n}}+2(n-1)b\left(\int_{\partial M}u^{\frac{2(n-1)}{n-2}}d\sigma_{g_0}\right)^{\frac{n-2}{n-1}}\\
\leq& (Y_{a,b}(\mathbb{R}_+^n,\mathbb{R}^{n-1})^{-1}+\e)\frac{4(n-1)}{n-2}\int_{M}|\nabla u_{a,b}|_{g_0}^2d\mu_{g_0}+C(\varepsilon)\int_Mu^2_{a,b}d\mu_{g_0}.
\end{align*}
Since $Y(M,\pa M)\geq 0$, we can choose an initial metric such that $R_{g_0}\geq 0$ and $h_{g_0}\geq 0$. It follows from Proposition \ref{prop:continuity_conformal_invariant} that $Y_{a,b}(M,\pa M)$ is continuous in $K$, then there exists $k_0>0$ such that
$$\min_K \{Y_{a,b}(\mathbb{R}_+^n,\mathbb{R}^{n-1})-Y_{a,b}(M,\pa M))\}\geq k_0.$$ 
By choosing $\e$ sufficiently small, with a constant $C=C(k_0)>0$ we obtain
\begin{equation}\label{est:inverse_Poincare}
\int_{M}|\nabla u_{a,b}|_{g_0}^2d\mu_{g_0}\leq C \int_M u^2_{a,b}d\mu_{g_0}.
\end{equation}
First we claim that  $\forall (a,b)\in K, \int_M u_{a,b}^2 d\mu_{g_0}\geq \bar C_1(K,g_0)>0$. Otherwise there exists a sequence of minimizers $u_m:=u_{a_m,b_m}$ with $(a_m,b_m)\in K$ such that $\int_M u_m^2d\mu_{g_0}\to 0$, then \eqref{est:inverse_Poincare} gives $\|u_m\|_{H^1(M,g_0)}\to 0$ as $m \to \infty$, which contradicts the normalization \eqref{normalization_minimizer} of $u_m$.

Next we assert that $\forall (a,b)\in K, \int_{\pa M} u_{a,b}^{2(n-1)/(n-2)} d\sigma_{g_0}\geq C(K,g_0)>0$. By negation, there exists a sequence of minimizers $u_m=u_{a_m,b_m}$ with $(a_m,b_m)\in K$ such that $\int_{\pa M}u_{m}^{2(n-1)/(n-2)}d\sigma_{g_0}\to 0$ as $m \to \infty$ . Since $K$ is compact, we may assume $(a_m,b_m)\to (a,b)\in K$ and $E[u_m]=Y_{a_m,b_m}(M,\pa M)\to Y_{a,b}(M,\pa M)$ by Proposition \ref{prop:continuity_conformal_invariant} as $m \to \infty$. Notice that $E[u_m]=Y_{a_m,b_m}(M,\pa M)$, it follows from Proposition \ref{prop:continuity_conformal_invariant} that $u_m$ is uniformly bounded in $H^1(M,g_0)$. Up to a subsequence, there hold $u_m\rightharpoonup u$ weakly in $H^1(M,g_0)$ and 
\begin{align*}
\int_{M}u^2d\mu_{g_0}=& \lim_{m\to \infty}\int_M u_m^2d\mu_{g_0}\geq \bar C_1,\\
\int_{\pa M}u^2d\sigma_{g_0}=&\lim_{m \to \infty}\int_{\pa M}u_m^2d\sigma_{g_0}=0.
\end{align*}
This means $u\not \equiv 0$ and $u=0$ a. e. on $\partial M$. On the other hand, $u_m$ satisfies
\begin{align}\label{eq:weak_sol_Euler_lagrange}
&\int_{M}\left(\frac{4(n-1)}{n-2}\langle\nabla u_m,\nabla\psi\rangle_{g_0}+ R_{g_0}u_m\psi \right)d\mu_{g_0}+2(n-1)\int_{\pa M}h_{g_0}u_m\psi d\sigma_{g_0}\no\\
=&\left[2(n-1)b\left(\int_{\pa M}u_m^{\frac{2(n-1)}{n-2}}d\sigma_{g_0}\right)^{-\frac{1}{n-1}}\int_{\pa M}u_m^{\frac{n}{n-2}}\psi d\sigma_{g_0}\right.\no\\
&\quad\left.+a\left(\int_M u_m^{\frac{2n}{n-2}}d\sigma_{g_0}\right)^{-\frac{2}{n}}\int_{M}u_m^{\frac{n+2}{n-2}}\psi d\sigma_{g_0}\right]Y_{a_m,b_m}(M,\pa M)
\end{align}
for all $\psi \in H^1(M,g_0)$. By H\"older's inequality and the normalization \eqref{normalization_minimizer} for $u_m$, we have
$$
\left(\int_{\pa M}u_m^{\frac{2(n-1)}{n-2}}d\sigma_{g_0}\right)^{-\frac{1}{n-1}}\int_{\pa M}u_m^{\frac{n}{n-2}}\psi d\sigma_{g_0}\to 0
$$
and
$$
\int_M u_m^{\frac{2n}{n-2}}d\mu_{g_0}\to a^{\frac{n}{2-n}},  \hbox{~~as~~} m \to \infty.
$$
By letting $m\to \infty$ in \eqref{eq:weak_sol_Euler_lagrange}, $u$ weakly solves
 \begin{align*}
\begin{cases}
\displaystyle-\frac{4(n-1)}{n-2}\Delta_{g_0}u+R_{g_0}u=a^{\frac{n}{n-2}}Y_{a,b}(M,\pa M) u^{\frac{n+2}{n-2}}&\text{ in } M,\\
\displaystyle\frac{2}{n-2}\frac{\partial u}{\partial{\nu_{g_0}}}+h_{g_0}u=0&\text{ on }\partial M.
\end{cases}
\end{align*}
Then Hopf boundary point lemma gives $u>0$ on $\pa M$. This yields a contradiction.
\end{proof}

Based on these preparations, we are now in a position to establish Theorem \ref{thm:compactness_minimizers}.

\begin{proof}[\textbf{Proof of Theorem \ref{thm:compactness_minimizers}}]
We only need to prove the assertion for $Y(M,\pa M)\geq 0$ due to the same reason of \cite{ChenHoSun}. 

First we claim that there exits $C=C(K,g_0)$ such that $u_{a,b}\leq C$ for any $(a,b)\in K$. By contradiction, suppose there exist sequences $\{(a_m,b_m); m\in \mathbb{N}\} \subset K$ and $\{p_m; m \in \mathbb{N}\} \subset \bar M$ such that
$$r_m:=u_{a_m,b_m}(p_m)=\max_{x \in \bar M} u_{a_m,b_m}(x)\to \infty \hbox{~~as~~} m\to \infty.$$
For brevity, we set $u_m=u_{a_m,b_m}$. Since $M$ is compact, we may assume $p_m\to p_0\in \bar{M}$ as $m \to \infty$. 

If $\lim_{m \to \infty}{\rm dist}_{g_0}(p_m,\pa M)r_m^{\frac{2}{n-2}}=\infty$, under normal coordinates around $p_0$, near $p_0$ there holds
$$(g_0)_{ij}(x)=\delta_{ij}+O(|x|^2).$$
Observe that
\begin{align*}
\frac{4(n-1)}{n-2}\frac{1}{\sqrt{\det g_0}}\pa_i(\sqrt{\det g_0}g^{ij}_0\pa_j u_m)-R_{g_0}u_m+\tilde{a}_mu_m^{\frac{n+2}{n-2}}=0 
\end{align*}
in $\Omega_\rho$, where 
\begin{align*}
\tilde{a}_m=a_m\left(\int_{M}u_m^{\frac{2n}{n-2}}d\mu_{g_0}\right)^{-\frac{2}{n}}Y_{a_m,b_m}(M,\pa M).
\end{align*}
Define $\rho_m=\rho r_m^{\frac{2}{n-2}}$ and
$$v_m(y)=r_m^{-1}u_m(\exp_{p_m}(yr_m^{-\frac{2}{n-2}})) \text{ for }y\in B_{\rho_m}(0)\subset \mathbb{R}^n.$$
Then $v_m(0)=1$ and $0<v_m(y)\leq 1$ in $B_{\rho_m}(0)$. Let $g_m(y)=g_0(\exp_{p_m}(yr_m^{-\frac{2}{n-2}}))$, $f_m(y)=r_m^{-\frac{4}{n-2}}R_{g_0}(\exp_{p_m}(yr_m^{-\frac{2}{n-2}}))$. Then $v_m$ satisfies
\begin{align*}
\frac{4(n-1)}{n-2}\frac{1}{\sqrt{\det g_m}}\frac{\partial }{\partial y^i}(\sqrt{\det g_m}g^{ij}_m\frac{\partial}{\partial y^j} v_m)-f_mv_m+\tilde{a}_mv_m^{\frac{n+2}{n-2}}=0
\end{align*}
in $B_{\rho_m}(0)$. As $m\to \infty$, there hold
$$(g_m)_{ij}\to\delta_{ij}\quad f_m\to 0 \text{ in }C^1(\hat K) \text{~~for any compact set~~} \hat K \subset\mathbb{R}^n.$$
Since $K$ is compact and from Lemma \ref{lem:lbd_minimizers} that $\tilde{a}_m$ is uniformly bounded, up to a subsequence we get
$$(a_m,b_m)\to (a,b),\quad \tilde{a}_m\to \tilde a, \hbox{~~as~~} m \to \infty.$$
From the $W^{2,p}$-estimate, $\|v_m\|_{C^\lambda(B_{r_m})}$ is uniformly bounded  for any  $\lambda\in (0,1)$. Applying Schauder interior estimates and the diagonal method to extract a subsequence from $\{v_m\}$,  still denote as $\{v_m\}$, we obtain $v_m\to v$ in $C^{2,\lambda}(\hat K)$, as $m \to \infty$. Moreover $v$ satisfies
\begin{align*}
\frac{4(n-1)}{n-2}\Delta v+\tilde{a}v^{\frac{n-2}{n+2}}=0 \hbox{~~in~~} \mathbb{R}^n.
\end{align*}
Notice that $v(0)=1$ and $0\leq v\leq 1$, the strong maximum principle gives $v>0$. From Fatou's lemma, we have
\begin{equation}\label{eq:vol_upperbd_v}
\int_{\mathbb{R}^n}v^{\frac{2n}{n-2}}dx\leq\liminf_{m\to \infty}\int_{B_{\rho_m}(0)}v_m^{\frac{2n}{n-2}} \sqrt{\det g_m}dx\leq\liminf_{m\to \infty} \int_{M}u_m^{\frac{2n}{n-2}}d\mu_{g_0}.
\end{equation}
Recall that 
\begin{align*}
\tilde{a}=aY_{a,b}(M,\pa M)\lim_{m\to \infty}\left(\int_{M}u_m^{\frac{2n}{n-2}}d\mu_{g_0}\right)^{-\frac{2}{n}}.
\end{align*}
It is not hard to show that if $Y(M,\pa M)=0$, then $Y_{a,b}(M,\pa M)=0$ for any $(a,b)\in K$. If either $a=0$ or $Y(M,\pa M)=0$, then $\tilde a=0$. Then the strong maximum principle gives $v\equiv 1$. Using similar arguments in Lemma \ref{lem:lbd_minimizers}, one can show that $u_m$ is uniformly bounded in $H^1(M,g_0)$. From this and \eqref{eq:vol_upperbd_v}, we have
$$\int_{\mathbb{R}^n}v^{\frac{2n}{n-2}}dx\leq C,$$
which contradicts $v\equiv 1$ in $\mathbb{R}^n$. When $Y(M,\pa M)>0$ and $a>0$, then $\tilde a>0$. Hence the classification theorem in \cite{Caffarelli-Gidas-Spruck} states that any $C^2$ positive solution $v$ to the above problem has the form of
$$v_{x_0,\epsilon}(x)=\left(\frac{4n(n-1)}{\tilde a}\right)^{\frac{n-2}{4}}\left(\frac{\epsilon}{\epsilon^2+|x-x_0|^2}\right)^{\frac{n-2}{2}}$$
for some $\epsilon>0$ and fixed $x_0 \in \mathbb{R}^n$. It is well-know that
$$ \int_{\mathbb{R}^n}v_{x_0,\epsilon}^{\frac{2n}{n-2}}dx= \int_{\mathbb{R}^n}v_{0,1}^{\frac{2n}{n-2}}dx, \quad  \int_{\mathbb{R}^n}|\nabla v_{x_0,\epsilon}|^2dx=\int_{\mathbb{R}^n}|\nabla v_{0,1}|^2dx.$$
Then we assert that
\begin{align}
\tilde{a}\int_{\mathbb{R}^n}v^{\frac{2n}{n-2}}dx=&\frac{4(n-1)}{n-2}\int_{\mathbb{R}^n}|\nabla v|^2dx=2^{\frac{2}{n}}aY_{a,0}(\mathbb{R}^n_+,\mathbb{R}^{n-1})\left(\int_{\mathbb{R}^n}v^{\frac{2n}{n-2}}dx\right)^{\frac{n-2}{n}}.\label{eq:contr-arg-1}
\end{align}
where the last identity follows from
\begin{align*}
Y_{a,0}(\mathbb{R}_+^n,\mathbb{R}^{n-1})=&\frac{\frac{4(n-1)}{n-2}\int_{\mathbb{R}_+^n}|\nabla v_{0,1}|^2 dx}{a\left(\int_{\mathbb{R}_+^n}v_{0,1}^{\frac{2n}{n-2}}dx\right)^{\frac{n-2}{n}}}=\frac{\frac{4(n-1)}{n-2}\frac{1}{2}\int_{\mathbb{R}^n}|\nabla v_{0,1}|^2 dx}{a\left(\frac{1}{2}\int_{\mathbb{R}^n}v_{0,1}^{\frac{2n}{n-2}}dx\right)^{\frac{n-2}{n}}}\\
=&\frac{1}{2^{\frac{2}{n}}a}\frac{\frac{4(n-1)}{n-2}\int_{\mathbb{R}^n}|\nabla v|^2 dx}{\left(\int_{\mathbb{R}^n}v^{\frac{2n}{n-2}}dx\right)^{\frac{n-2}{n}}},
\end{align*}
in view of the classification theorem in \cite{li-zhu} and symmetry.
Together with Proposition \ref{prop:continuity_conformal_invariant}, \eqref{eq:vol_upperbd_v} and \eqref{eq:contr-arg-1} give
\begin{align*}
Y_{a,0}(\mathbb{R}^n_+,\mathbb{R}^{n-1})\geq Y_{a,0}(M,\pa M)\geq& Y_{a,b}(M,\pa M)\geq 2^{\frac{2}{n}}Y_{a,0},(\mathbb{R}^n_+,\mathbb{R}^{n-1}),
\end{align*}
which obviously yields a contradiction.

If $\lim_{m \to \infty}{\rm dist}_{g_0}(p_m,\pa M)r_m^{\frac{2}{n-2}}<\infty$. Let $X=(x^1,\cdots,x^{n-1})$ be the normal coordinates of $x \in \pa M$ around $p_0$ and $\nu(X):=\nu_{g_0}(X)$ be the unit outward normal at $x \in \pa M$. For small $t\geq 0$, $\exp_{X}(-t\nu(X)):B_\rho^+(0)\to \Omega_\rho\subset M$ is a diffeomorphism, then  $(x^1,\cdots,x^{n-1},t)$ are called the Fermi coordinates around $p_0$. Without loss of generality, we assume $p_m\in \Omega_\rho$ and denote by $p_m=\exp_{X_m}(-t_m\nu(X_m))$.

Under these coordinates, we have
\begin{align*}
\begin{cases}
\displaystyle\frac{4(n-1)}{n-2}\frac{1}{\sqrt{\det g_0}}\pa_i(\sqrt{\det g_0}g^{ij}_0\pa_j u_m)-R_{g_0}u_m+\tilde{a}_mu_m^{\frac{n+2}{n-2}}=0 &\hbox{~~in~~}\Omega_\rho,\\
\displaystyle\frac{2}{n-2}\frac{\partial u_m}{\partial \nu_{g_0}}+h_{g_0}u_m=\tilde{b}_mu_m^{\frac{n}{n-2}}&\hbox{~~on~~}\pa \Omega_\rho\cap \pa M,
\end{cases}
\end{align*}
where 
\begin{align*}
\tilde{a}_m&=a_m\left(\int_{M}u_m^{\frac{2n}{n-2}}d\mu_{g_0}\right)^{-\frac{2}{n}}Y_{a_m,b_m}(M,\pa M),\\
\tilde{b}_m&=b_m\left(\int_{\partial M}u_m^{\frac{2(n-1)}{n-2}}d\sigma_{g_0}\right)^{-\frac{1}{n-1}}Y_{a_m,b_m}(M,\pa M).
\end{align*}
Define $\rho_m=\rho r_m^{\frac{2}{n-2}}$ and
$$v_m(X,t)=r_m^{-1}u_m(\exp_{X_m}(-t_mr_m^{-\frac{2}{n-2}} \nu(Xr_m^{-\frac{2}{n-2}}))) \hbox{~~in~~}B_{\rho_m}^+(0).$$
Then $v_m(0)=1$ and $0<v_m(X,t)\leq 1$ in $B_{\rho_m}^+(0)$. We set
\begin{align*}
g_m(X,t)=&g_0\big(\exp_{X_m}(-t_mr_m^{-\frac{2}{n-2}}\nu(Xr_m^{-\frac{2}{n-2}}))\big),\\
\tilde f_m(X,t)=&r_m^{-\frac{4}{n-2}}R_{g_0}\big(\exp_{X_m}(-t_mr_m^{-\frac{2}{n-2}}\nu(Xr_m^{-\frac{2}{n-2}}))\big),\\h_m(X)=&r_m^{-\frac{2}{n-2}}h_{g_0}(Xr_m^{-\frac{2}{n-2}}).
\end{align*}
 Thus $v_m$ satisfies
\begin{align*}
\begin{cases}
\displaystyle \frac{4(n-1)}{n-2}\frac{1}{\sqrt{\det g_m}}\partial_i(\sqrt{\det g_m}g^{ij}_m\partial_j v_m)-\tilde f_mv_m+\tilde{a}_mv_m^{\frac{n+2}{n-2}}=0 &\hbox{in~~}B_{\rho_m}^+,\\
\displaystyle -\frac{2}{n-2}\pa_t v_m+h_mv_m-\tilde{b}_mv_m^{\frac{n}{n-2}}=0 &\hbox{on~~} D_{\rho_m}.
\end{cases}
\end{align*}
Since $r_m\to \infty$ as $m \to \infty$, we have  
$$(g_m)_{ij}\to\delta_{ij},\quad \tilde f_m,~~h_m\to 0 \text{~~in~~}C^1(\tilde K)$$
for any compact set $\tilde K \subset \overline{\mathbb{R}_+^n}$. Since $K$ is compact and from Lemma \ref{lem:lbd_minimizers} that $\tilde{a}_m,\tilde b_m$ are bounded, up to a subsequence we have
$$(a_m,b_m)\to (a,b),~~\tilde{a}_m\to \tilde a,~~\tilde{b}_m\to \tilde{b} \hbox{~~as~~} m \to \infty.$$
From $W^{2,p}$-estimate, $\|v_m\|_{C^\lambda(\overline{B_{\rho_m}^+})}$ is uniformly bounded  for any  $\lambda\in (0,1)$.  Applying the Schauder estimates and the diagonal method to extract a subsequence from $\{v_m\}$,  still denote as $\{v_m\}$, we obtain $v_m\to v$ in $C^{2,\lambda}(\tilde K)$, as $m \to \infty$. Moreover $v$ satisfies
\begin{align}\label{eq:blowupsols_wbdry}
\begin{cases}
\displaystyle\frac{4(n-1)}{n-2}\Delta v+\tilde{a}v^{\frac{n-2}{n+2}}=0&\text{~~in~~}\mathbb{R}^n_+,\\
\displaystyle-\frac{\partial v}{\partial t}-\tilde{b}v^{\frac{n}{n-2}}=0&\text{~~on~~}\mathbb{R}^{n-1}.
\end{cases}
\end{align}
Notice that $v(0)=1$ and $0\leq v\leq 1$, the strong maximum principle gives $v>0$. By Proposition \ref{prop:continuity_conformal_invariant} we get
\begin{align*}
\tilde{a}=&aY_{a,b}(M,\pa M)\lim_{m\to \infty}\left(\int_{M}u_m^{\frac{2n}{n-2}}d\mu_{g_0}\right)^{-\frac{2}{n}},\\
\tilde{b}=&bY_{a,b}(M,\pa M)\lim_{m\to \infty}\left(\int_{\partial M}u_m^{\frac{2(n-1)}{n-2}}d\sigma_{g_0}\right)^{-\frac{1}{n-1}}.
\end{align*}
Fatou's  lemma gives
\begin{align*}
\int_{\mathbb{R}^n_+}v^{\frac{2n}{n-2}}dx\leq\liminf_{m\to \infty}\int_{B_{\rho_m}^+}v_m^{\frac{2n}{n-2}}\sqrt{\det g_m}dx\leq\liminf_{m\to \infty} \int_{M}u_m^{\frac{2n}{n-2}}d\mu_{g_0},\\
\int_{\mathbb{R}^{n-1}}v^{\frac{2(n-1)}{n-2}}d\sigma\leq\liminf_{m\to \infty}\int_{D_{\rho_m}}v_m^{\frac{2(n-1)}{n-2}}\sqrt{\det g_m}d\sigma\leq\liminf_{m\to \infty} \int_{\partial M}u_m^{\frac{2(n-1)}{n-2}}d\sigma_{g_0}.
\end{align*}
If $Y(M,\pa M)=0$, then $\tilde a=\tilde b=0$. Then the strong maximum principle gives $v\equiv 1$ in $\mathbb{R}_+^n$. As above, we also get $v \in L^{2n/(n-2)}(\mathbb{R}_+^n)$. Thus we reach a contradiction. If $Y(M,\pa M)>0$, testing with $v$ in problem \eqref{eq:blowupsols_wbdry}, we get
\begin{align*}
&\tilde{a}\int_{\mathbb{R}^n_+}v^{\frac{2n}{n-2}}dx+2(n-1)\tilde{b}\int_{\mathbb{R}^{n-1}}v^{\frac{2(n-1)}{n-2}}d\sigma\\
=&\frac{4(n-1)}{n-2}\int_{\mathbb{R}^n_+}|\nabla v|^2dx\\
=&\left[a\left(\int_{\mathbb{R}^n_+}v^{\frac{2n}{n-2}}dx\right)^{\frac{n-2}{n}}+2(n-1)b\left(\int_{\mathbb{R}^{n-1}}v^{\frac{2(n-1)}{n-2}}d\sigma\right)^{\frac{n-2}{n-1}}\right]Y_{a,b}(\mathbb{R}_+^n,\mathbb{R}^{n-1}),
\end{align*}
where the last identity follows from the fact that $Y_{a,b}(\mathbb{R}^+,\mathbb{R}^{n-1})$ is achieved by any positive solution to \eqref{eq:blowupsols_wbdry} in virtue of \cite[Theorem 3.3]{escobar5}.
Since $a^2+b^2>0$ and $Y(M,\pa M)>0$ imply $\tilde a^2+\tilde b^2>0$, combining with the above estimates we have
$$Y_{a,b}(M,\pa M)\geq Y_{a,b}(\mathbb{R}^n_+,\mathbb{R}^{n-1}),$$
which contradicts the assumption $Y_{a,b}(M,\pa M)<Y_{a,b}(\mathbb{R}_+^n,\mathbb{R}^{n-1}),~\forall~ (a,b) \in K$.

Finally based on the above upper bound, it follows from Lemma \ref{lem:lbd_minimizers} and \cite[Proposition A-4]{almaraz5} that $\forall ~(a,b)\in K$, $u_{a,b}$ has a uniform positive  lower bound. 
Then the Schauder estimates give the $C^2$-estimate of $u_{a,b}$ in $K$.
\end{proof}

As a byproduct of Proposition \ref{prop:continuity_conformal_invariant}, there hold
\begin{align*}
\lim_{a\to 0^+} Y_{a,b}(M,\pa M)&=Y_{0,b}(M,\pa M),\quad \text{for any fixed } b>0,\\
\lim_{b\to 0^+} Y_{a,b}(M,\pa M)&=Y_{a,0}(M,\pa M),\quad \text{for any fixed } a>0.
\end{align*}
From these together with Theorem \ref{thm:compactness_minimizers}, when $Y(M, \pa M)>0$ expression \eqref{eq:normalized_mc} shows that the normalized conformal metric of scalar curvature $1$ has positive constant mean curvature, which runs in a large set of $\mathbb{R}_+$.

\section{Construction of test functions}\label{Sect5}
In this section, we use the following notation: given any $\rho>0$, let
\begin{align*}
B_\rho^+(0)=B_\rho(0)\cap \mathbb{R}^n_+;\qquad &\partial^+ B_\rho^+(0)=\partial B_\rho^+(0)\cap \mathbb{R}^n_+;\\
D_\rho(0)=\partial B_\rho^+(0)\backslash \partial^+ B_\rho^+(0).&
\end{align*}

From now on, we assume $Y(M,\pa M)>0$. Recall that $d=[(n-2)/2]$ when $n \geq 3$. By a result of Marques \cite{marques2}, for each $x_0\in \partial M$ there exists a conformal metric $g_{x_0}=f_{x_0}^{4/(n-2)}g_0$ with $f_{x_0}(x_0)=1$. Suppose  $\Psi_{x_0}:B_{2\rho}^+(0)\to M$ is the $g_{x_0}$-Fermi coordinates around $x_0$, set $x=\Psi_{x_0}(y)$ for $y \in B_{2\rho}^+(0)$.  Under these coordinates, there hold  $\det g_{x_0}=1+O(|y|^{2d+2})$, $(g_{x_0})_{ij}(0)=\delta_{ij}$ and $(g_{x_0})_{ni}(y)=\delta_{ni}$, for any $y\in B^+_{2\rho}(0)$ and $i,j=1,...,n$. Let $g_{x_0}=\exp(h)$, where $\exp$ denotes the matrix exponential, then the symmetric 2-tensor $h$ has the following properties:
\begin{equation}\label{propr:h}
\begin{cases}
\mathrm{tr}\,h(y)=O(|y|^{2d+2})\,,&\text{for}\: y\in B_{2\rho}^+(0),\\
h_{ab}(0)=0\,,&\text{for}\: i,j=1,...,n\,,
\\
h_{in}(y)=0\,,&\text{for}\:y\in B^+_{2\rho}(0),\: i=1,...,n\,,
\\
\d_ah_{bc}(0)=0\,,&\text{for}\: a,b,c=1,...,n-1\,,
\\
\sum_{b=1}^{n-1}y^bh_{ab}(y)=0\,,&\text{for}\:y\in D_{2\rho}(0),\: a=1,...,n-1\,.
\end{cases}
\end{equation}  
The last two properties follow from the fact that Fermi coordinates are normal on $\pa M$. 

\begin{convention} 
In the following, we let $a,b,c,\cdots$ range from $1$ to $n-1$ and $i,j,k\cdots$ range from $1$ to $n$. We adopt Einstein summation convention and simplify $B_\rho^+(0),\partial^+ B_\rho^+(0)$, $D_\rho(0)$ by $B_\rho^+$, $\partial^+ B_\rho^+$, $D_\rho$ without otherwise stated.
\end{convention}

Under these conformal Fermi coordinates, the mean curvature satisfies
\begin{align}\label{mean_curv_Fermi}
h_{g_{x_0}}(x)=&-\frac{1}{2(n-1)}g^{ab}\d_ng_{ab}(x)\no\\
=&-\frac{1}{2(n-1)}\d_n(\log \det (g_{x_0}))(x)=O(|y|^{2d+1}).
\end{align}
Let $H_{ij}$ be the Taylor expansion of $h_{ij}$ up to order $d$, namely
$$H_{ij}=\sum_{|\alpha|=1}^d \pa^\alpha h_{ij}y^\alpha,$$
where $\alpha$ is a multi-index and $\pa^\alpha h_{ij}=\pa^\alpha h_{ij}(0)$. Then $H$ satisfies \eqref{propr:h} except the first property replaced by ${\rm tr}H=0$.

\subsection{Linearization of scalar curvature and mean curvature}\label{Subsect5.1}
By \eqref{eq:bdry_bubble} and \eqref{prob:half-space} we get
\begin{equation}\label{eq:bubble_Einstein}
W_\e\pa_i\pa_j W_\e-\frac{n}{n-2}\pa_iW_\e\pa_jW_\e=\frac{1}{n}\left(W_\e\Delta W_\e-\frac{n}{n-2}|\nabla W_\e|^2\right)\delta_{ij} \hbox{~~in~~} \mathbb{R}_+^n.
\end{equation}

\begin{proposition}\label{prop:linearized_eqs_smc}
Let $V$ be a smooth vector field in $\overline{\mathbb{R}_+^n}$ satisfying $V_n=0=\pa_n V_a $ on $ \mathbb{R}^{n-1}$, where $1\leq a \leq n-1$. Let 
$$\psi=V_k \pa_k W_\e+\frac{n-2}{2n}W_\e \mathrm{div} V $$
and
$$S_{ij}=\pa_i V_j+\pa_j V_i-\frac{2}{n} \mathrm{div} V \delta_{ij}$$
be a conformal Killing operator. Then we have
\begin{equation}\label{eq:linearized_scalar_curv}
\Delta \psi+n(n+2)W_\e^{\frac{4}{n-2}}\psi=\frac{n-2}{4(n-1)}W_\e \pa_i\pa_j S_{ij}+\pa_i(\pa_j W_\e S_{ij})\hbox{~~in~~} \mathbb{R}_+^n
\end{equation}
and
\begin{equation}\label{eq:linearized_mean_curv}
\partial_n \psi-\frac{n}{n-2}W_\epsilon^{-1}\partial_n W_\epsilon \psi=\frac{1}{2}\partial_{n}W_\epsilon S_{nn}+\frac{n-2}{4(n-1)}W_\epsilon\partial_n S_{nn} \hbox{~~on~~} \mathbb{R}^{n-1}.
\end{equation}
\end{proposition}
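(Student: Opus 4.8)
The plan is to verify both identities by a direct but carefully organized computation. The structural inputs are only the two defining relations of $W_\epsilon$ from \eqref{prob:half-space}, namely $-\Delta W_\epsilon=n(n-2)W_\epsilon^{(n+2)/(n-2)}$ in $\mathbb{R}^n_+$ and $\partial_n W_\epsilon=(n-2)T_c W_\epsilon^{n/(n-2)}$ on $\mathbb{R}^{n-1}$, together with the hypotheses $V_n=0=\partial_n V_a$ on $\mathbb{R}^{n-1}$. It is worth keeping in mind the conceptual picture: $\psi$ is the infinitesimal conformal-factor change obtained by dragging the density $W_\epsilon$ along $V$ (the term $\tfrac{n-2}{2n}W_\epsilon\,\mathrm{div}\,V$ being the contribution of the conformal part of $\mathcal L_V g_{\mathbb R^n}$), while $S_{ij}$ records the failure of $V$ to be a conformal Killing field; the Einstein-type identity \eqref{eq:bubble_Einstein} expresses that $\bar g_\epsilon:=W_\epsilon^{4/(n-2)}g_{\mathbb R^n}$ is a space form and is the geometric reason behind the algebra, though it is not strictly needed for the bookkeeping below.

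For the interior identity \eqref{eq:linearized_scalar_curv} I would first expand $\Delta\psi$ by the Leibniz rule, using $\Delta(V_k\partial_k W_\epsilon)=(\Delta V_k)\partial_k W_\epsilon+2\,\partial_i V_k\,\partial_i\partial_k W_\epsilon+V_k\,\partial_k\Delta W_\epsilon$ and $\Delta(W_\epsilon\,\mathrm{div}\,V)=(\Delta W_\epsilon)\,\mathrm{div}\,V+2\,\partial_i W_\epsilon\,\partial_i\mathrm{div}\,V+W_\epsilon\,\Delta\,\mathrm{div}\,V$, and then substitute $\partial_k\Delta W_\epsilon=-n(n+2)W_\epsilon^{4/(n-2)}\partial_k W_\epsilon$, obtained by differentiating the bulk equation. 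The term $n(n+2)W_\epsilon^{4/(n-2)}\psi$ absorbs the $V_k\partial_k$ piece, and the $\mathrm{div}\,V$-coefficient collapses, via $\Delta W_\epsilon=-n(n-2)W_\epsilon^{(n+2)/(n-2)}$, to $2(n-2)W_\epsilon^{(n+2)/(n-2)}\,\mathrm{div}\,V$. On the other side I would expand, using only the definition of $S$ and symmetry of the Hessian, $\partial_i\partial_j S_{ij}=\tfrac{2(n-1)}{n}\Delta\,\mathrm{div}\,V$, $\partial_i S_{ij}=\Delta V_j+\tfrac{n-2}{n}\partial_j\mathrm{div}\,V$, and $\partial_i\partial_j W_\epsilon\,S_{ij}=2\,\partial_i\partial_j W_\epsilon\,\partial_i V_j-\tfrac2n(\mathrm{div}\,V)\Delta W_\epsilon$; the last display reproduces the surviving $2(n-2)W_\epsilon^{(n+2)/(n-2)}\,\mathrm{div}\,V$, the $W_\epsilon\,\Delta\,\mathrm{div}\,V$ terms match through $\tfrac{n-2}{4(n-1)}\cdot\tfrac{2(n-1)}{n}=\tfrac{n-2}{2n}$, and the remaining $\partial_k W_\epsilon\,\Delta V_k$, $\partial_i W_\epsilon\,\partial_i\mathrm{div}\,V$ and $\partial_i\partial_j W_\epsilon\,\partial_i V_j$ terms cancel term by term.

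For the boundary identity \eqref{eq:linearized_mean_curv} I would restrict everything to $\{y^n=0\}$. Differentiating the Neumann relation tangentially gives $\partial_a\partial_n W_\epsilon=\tfrac{n}{n-2}W_\epsilon^{-1}(\partial_n W_\epsilon)\,\partial_a W_\epsilon$, while $V_n=0$, $\partial_n V_a=0$ on $\mathbb{R}^{n-1}$ and their tangential derivatives (in particular $\partial_a\partial_n V_a=0$ there) kill most contributions. Computing $\partial_n\psi$ on the boundary and subtracting $\tfrac{n}{n-2}W_\epsilon^{-1}(\partial_n W_\epsilon)\psi$, the $V_a\partial_a W_\epsilon$ terms cancel and one is left with $(\partial_n V_n)\partial_n W_\epsilon-\tfrac1n(\partial_n W_\epsilon)(\partial_a V_a+\partial_n V_n)+\tfrac{n-2}{2n}W_\epsilon\,\partial_n^2 V_n$. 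On the right-hand side $S_{nn}=\tfrac{2(n-1)}{n}\partial_n V_n-\tfrac2n\partial_a V_a$, so $\tfrac12\partial_n W_\epsilon\,S_{nn}$ reproduces the $\partial_n W_\epsilon$-terms, and since $\partial_n S_{nn}=\tfrac{2(n-1)}{n}\partial_n^2 V_n$ on $\mathbb{R}^{n-1}$ we get $\tfrac{n-2}{4(n-1)}W_\epsilon\,\partial_n S_{nn}=\tfrac{n-2}{2n}W_\epsilon\,\partial_n^2 V_n$, which matches the last term.

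The computation presents no conceptual obstacle; the delicate points are purely organizational: differentiating the two defining relations of $W_\epsilon$ in the correct directions (and in particular trading second normal derivatives of $W_\epsilon$ on $\mathbb{R}^{n-1}$ for the Neumann data), using the boundary conditions on $V$ together with their tangential derivatives, and keeping track of the powers of $W_\epsilon$ so that the zeroth-order terms really cancel. As a cross-check, and as the conceptual source of \eqref{eq:bubble_Einstein}, one can argue geometrically instead: $\bar g_\epsilon=W_\epsilon^{4/(n-2)}g_{\mathbb R^n}$ is the round cap of constant sectional curvature $4$, so $R_{\bar g_\epsilon}$ and $h_{\bar g_\epsilon}$ are constants; the condition $V_n=0$ makes $V$ tangent to the boundary, so pulling $\bar g_\epsilon$ back by the flow of $V$ changes neither curvature; and since a short computation gives $\mathcal L_V\bar g_\epsilon=\tfrac{4}{n-2}(W_\epsilon^{-1}\psi)\,\bar g_\epsilon+W_\epsilon^{4/(n-2)}S$, the standard linearized scalar and mean curvature formulas—combined with $\mathrm{tr}\,S=0$ and the Einstein identity \eqref{eq:bubble_Einstein}—split the vanishing variation into exactly \eqref{eq:linearized_scalar_curv} and \eqref{eq:linearized_mean_curv} after applying the conformal covariance of the conformal Laplacian to pass from $\bar g_\epsilon$ back to $g_{\mathbb R^n}$.
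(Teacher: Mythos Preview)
Your proposal is correct. Both identities follow exactly as you outline: the interior one by expanding $\Delta\psi$, differentiating $-\Delta W_\e=n(n-2)W_\e^{(n+2)/(n-2)}$, and matching against $\partial_i\partial_jS_{ij}=\tfrac{2(n-1)}{n}\Delta\mathrm{div}\,V$ and the expansion of $\partial_i(\partial_jW_\e S_{ij})$; the boundary one by differentiating the Neumann relation tangentially to get $\partial_a\partial_nW_\e=\tfrac{n}{n-2}W_\e^{-1}(\partial_nW_\e)\partial_aW_\e$ and using $\partial_a\partial_nV_a=0$ on $\mathbb{R}^{n-1}$.

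The paper, however, takes as its \emph{primary} proof the geometric argument you sketch only as a cross-check. It writes $W_\e^{4/(n-2)}S=\mathcal L_V(g_\Sigma)-\tfrac{4}{n-2}\psi W_\e^{-1}g_\Sigma$ and computes the linearizations $R'_{g_\Sigma}$ and $h'_{g_\Sigma}$ of scalar and mean curvature at the round cap metric $g_\Sigma$ on both summands. The Lie-derivative contributions vanish by diffeomorphism invariance (verified explicitly using $\pi_{ab}=-2T_cg_{ab}$ and identity \eqref{eq:bubble_Einstein}), and the conformal-factor contribution yields the left-hand sides of \eqref{eq:linearized_scalar_curv}--\eqref{eq:linearized_mean_curv} via the conformal change formulas. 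What each approach buys: your direct computation is short, completely elementary, and avoids any curvature-variation machinery; the paper's route explains \emph{why} the identities hold---they are nothing but the statement that dragging the round metric along a boundary-tangent vector field does not change its (constant) scalar and mean curvatures---and this structural understanding is what motivates the choice of correction term $\psi$ in the test function \eqref{eq:test_fcn}. The paper in fact remarks that the direct route (essentially yours) appears in Brendle~\cite{Brendle2} and S.~Chen~\cite{ChenSophie}, but opts for the geometric proof precisely to record this interpretation.
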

\begin{proof}
The linearized equations \eqref{eq:linearized_scalar_curv} and \eqref{eq:linearized_mean_curv} for scalar curvature and mean curvature can be verified by direct computations in \cite[Proposition 5]{Brendle2} and \cite[Proposition 5]{ChenSophie}, respectively. Somewhat inspired by Brendle \cite{Brendle2}, we adopt a geometric proof of these linearized equations. It involves the first variation formulae for scalar curvature and mean curvature at a round metric of the spherical cap $\Sigma$.

Let $g_\Sigma=W_\e^{4/(n-2)}g_{\mathbb{R}^n}$ be the standard spherical metric on $\Sigma$ of constant sectional curvature $4$, see also Section \ref{Sect2}. We now consider a family of perturbed metrics of $g_\Sigma$:
\begin{equation}\label{eq:perturbed_metrics}
W_\e^{\frac{4}{n-2}}e^{tS}=\phi_t^\ast((W_\e-t\psi)^{\frac{4}{n-2}}g_{\mathbb{R}^n}), ~~t \in \mathbb{R},
\end{equation}
where $\phi_t$ is one-parameter family of diffeomorphisms on $S^n$ generated by $V$. Differentiating \eqref{eq:perturbed_metrics} with respect to $t$ and evaluating at $t=0$, we get
\begin{equation}\label{eq:decom_S}
W_\epsilon^{\frac{4}{n-2}} S=\mathcal{L}_{V}(g_\Sigma)-\frac{4}{n-2}\psi W_\e^{-1}g_\Sigma.
\end{equation}
We remark that such a decomposition of symmetric $2$-tensor is guaranteed by \cite[Lemma 4.57]{Besse}. Recall that the first variation of scalar curvature (see \cite[Theorem 1.174 (e)]{Besse}) is given by:
\begin{equation}\label{eq:1st_var_R}
R_g'(h)=-h^{ik}R_{ik}+\nabla^i\nabla^k h_{ik}-\Delta_g {\rm tr}_g (h)
\end{equation}
for any symmetric $2$-tensor $h$, where $\nabla$ indicates the covariant derivative of $g$.

On one hand, letting $\tilde g_E=e^{tS}$ we have
$$R_{W_\epsilon^{\frac{4}{n-2}}\tilde g_E}=W_\epsilon^{-\frac{n+2}{n-2}}\left(-\frac{4(n-1)}{n-2}\Delta_{\tilde g_E}W_\epsilon+R_{\tilde g_E}W_\epsilon\right).$$
 Notice that $\det \tilde g_E=1$ due to ${\rm tr}S=0$, then
\begin{align*}
\frac{d}{dt}\Big|_{t=0}\Delta_{\tilde g_E}W_\epsilon=\frac{d}{dt}\Big|_{t=0}\partial_i(e^{-tS_{ij}}\partial_j W_\epsilon)=-\partial_i(S_{ij}\partial_jW_\epsilon)
\end{align*}
and \eqref{eq:1st_var_R} gives
$$\frac{d}{dt}\Big|_{t=0}R_{\tilde g_E}=\partial_i\partial_j S_{ij}.$$
Thus we obtain
\begin{align}\label{eq:w_first_way}
R_{g_\Sigma}'(W_\epsilon^{\frac{4}{n-2}} S)=&\frac{d}{dt}\Big|_{t=0}R_{W_\epsilon^{\frac{4}{n-2}}\tilde g_E}\notag\\
=&W_\epsilon^{-\frac{n+2}{n-2}}\left(\frac{4(n-1)}{n-2}\partial_i(S_{ij}\partial_jW_\epsilon)+\partial_i\partial_jS_{ij}W_\epsilon\right).
\end{align}
On the other hand, using \eqref{eq:decom_S} and \eqref{eq:1st_var_R}, we have
\begin{equation}\label{relation:DR}
R_{g_\Sigma}'(W_\epsilon^{\frac{4}{n-2}}S)=R_{g_\Sigma}'(\mathcal L_{V}(g_\Sigma))-R_{g_\Sigma}'(\frac{4}{n-2}\psi W_\epsilon^{-1}g_\Sigma),
\end{equation}
where $\mathcal L_{V}(g_\Sigma)$ denotes the Lie derivative of metric $g_\Sigma$ along the vector field $V$. In particular, it is routine to verify that
\begin{equation}\label{est:R'_Lie_derivative}
R_{g_\Sigma}'(\mathcal L_{V}(g_\Sigma))=0.
\end{equation}
\iffalse
\textcolor{blue}{To see this, let $g=g_\Sigma$ and $h_{ij}=\mathcal{L}_V(g)=V_{i,j}+V_{j,i}$,  then $R_{ij}=4(n-1)g_{ij}$. Under $g$-normal coordinates, Ricci identity gives\todo[inline]{Here $V_i=g_{ij}V^j=W_\e^{\frac{4}{n-2}}V_i$.} 
$$V_{i,ji}=V_{i,ij}+R_{iji}^kV_k=V_{i,ij}+R_{kj}V_k=V_{i,ij}+4(n-1)V_j.$$
It follows that
$$V_{i,jij}=V_{i,ijj}+4(n-1)V_{j,j}$$
and
\begin{align*}
V_{j,iij}=&V_{j,iji}+R_{jij}^k V_{k,i}+R_{iij}^kV_{j,k}=V_{j,iji}+R_{ki}V_{k,i}-R_{kj}V_{j,k}\\
=&V_{j,iji}=(V_{j,ji}+4(n-1)V_i)_{,i}=V_{j,jii}+4(n-1)V_{i,i}.
\end{align*}
Combining these we obtain
\begin{align*}
(R_{g_\Sigma})'(h)=&-h_{ij}R_{ij}+h_{ij,ij}-h_{ii,jj}\no\\
=&-8(n-1) \mathrm{div} V+(V_{i,j}+V_{j,i})_{,ij}-2V_{i,ijj}\\
=&-8(n-1) \mathrm{div} V+(V_{i,jij}+V_{j,iij})-2V_{i,ijj}\\
=&0.
\end{align*}
This proves the claim.
}
\fi
It also follows from \eqref{eq:1st_var_R} that
\begin{align}\label{eq:w_second_way}
&R_{g_\Sigma}'(\frac{4}{n-2}\psi W_\epsilon^{-1}g_\Sigma)\no\\
=&\frac{4}{n-2}\left[-4n(n-1)W_\epsilon^{-1}\psi+(1-n)\Delta_{g_\Sigma}(W_\epsilon^{-1}\psi)\right]\no\\
=&-\frac{4(n-1)}{n-2}\left[4nW_\epsilon^{-1}\psi+n(n-2) W_\epsilon^{-1} \psi+W_\epsilon^{-\frac{n+2}{n-2}} \Delta \psi\right]\no\\
=&-\frac{4(n-1)}{n-2}W_\epsilon^{-\frac{n+2}{n-2}}\left[n(n+2)W_\epsilon^{\frac{4}{n-2}}\psi+\Delta \psi\right].
\end{align}
\iffalse
\textcolor{blue}{Another way to see this, let $g=g_\Sigma$ and $h_{ij}=\frac{4}{n-2}\bar u_\epsilon^{-1}w_\epsilon g_{S}$. Under $g$-normal coordinates, we have
\begin{align*}
h_{ij,ij}=&\frac{4}{n-2}\Delta_{g_\Sigma}(\bar u_\epsilon^{-1}w_\epsilon)\\
h_{ii,jj}=&\frac{4n}{n-2}\Delta_{g_\Sigma}(\bar u_\epsilon^{-1}w_\epsilon)
\end{align*}
Then we obtain
\begin{align*}
(R_{g_\Sigma})'(\frac{4}{n-2}w_\epsilon \bar u_\epsilon^{-1}g_\Sigma)=&\frac{4}{n-2}\left[-4n(n-1)\bar u_\epsilon^{-1}w_\epsilon+(1-n)\Delta_{g_\Sigma}(\bar u_\epsilon^{-1}w_\epsilon)\right]\\
=&-\frac{4(n-1)}{n-2}[4n\bar u_\epsilon^{-1}w_\epsilon+n(n-2) \bar u_\epsilon^{-1} w_\epsilon+\bar u_\epsilon^{-\frac{n+2}{n-2}} \Delta w_\epsilon]\\
=&-\frac{4(n-1)}{n-2}\bar u_\epsilon^{-\frac{n+2}{n-2}}[n(n+2)\bar u_\epsilon^{\frac{4}{n-2}}w_\epsilon+\Delta w_\epsilon].
\end{align*}
}
\fi
Putting \eqref{eq:w_first_way}-\eqref{eq:w_second_way} together, we obtain equation \eqref{eq:linearized_scalar_curv}.

Next we need to show \eqref{eq:linearized_mean_curv}. Let $\nu_g$ be the outward unit normal on $\mathbb{R}^{n-1}$, then
$$\nu_g=-\frac{g^{ni}}{\sqrt{g^{nn}}}\partial_i$$
and
\begin{align}\label{eq:mean_curv_local}
h_g=&-\frac{1}{n-1}g^{ab}\langle\nu_g,\nabla_{\pa_a}\pa_b\rangle=\frac{1}{n-1}g^{ab}g^{ni}(g^{nn})^{-\frac{1}{2}}g_{ij}\Gamma_{ab}^j\no\\
=&\frac{1}{n-1}g^{ab}\Gamma^n_{ab}(g^{nn})^{-\frac{1}{2}}.
\end{align}

\iffalse
\textcolor{blue}{ Notice that $\langle\nu_g,\pa_a\rangle=0$ and $\langle \nu_g,\nu_g\rangle=\frac{1}{g^{nn}}\langle g^{ni}\pa_{x^i},g^{nj}\pa_{x^j}\rangle=\frac{1}{g^{nn}}g^{ni}g_{ij}g^{nj}=1$. There is another way to compute:
\begin{align*}
h_g=&\frac{1}{n-1}g^{ab}\langle\nabla_{\partial_a}\nu_g,\partial_b\rangle\no\\
=&-\frac{1}{n-1}\left[\partial_a(g^{ni}(g^{nn})^{-\frac 12})g_{ib}g^{ab}+g^{ni}(g^{nn})^{-\frac 12}\Gamma_{ai}^j g^{ab}g_{jb}\right]\no\\
=&-\frac{1}{n-1}\left[\partial_a(g^{na}(g^{nn})^{-\frac 12})+g^{ni}(g^{nn})^{-\frac 12}\Gamma_{ai}^a\right].
\end{align*}
}
\fi

By the conformal change formula of mean curvatures, we get
\begin{equation}\label{eq:var_mean_curv_g_S}
\frac{d}{dt}\Big|_{t=0}h_{W_\epsilon^{\frac{4}{n-2}}\tilde g_E}=\frac{2}{n-2}W_\epsilon^{-\frac{n}{n-2}}\frac{d}{dt}\Big|_{t=0}\left(\frac{\partial W_\epsilon}{\partial\nu_{\tilde g_E}}+\frac{n-2}{2}h_{\tilde g_E}W_\epsilon\right).
\end{equation}
Observe that
$$\frac{\partial W_\epsilon}{\partial \nu_{\tilde g_E}}=-(\tilde g^{nn}_E)^{-1/2}\tilde g^{ni}_E\partial_i W_\epsilon,$$
then
\begin{equation}\label{eq:var_normal_derivative}
\frac{d}{dt}\Big|_{t=0}\frac{\partial W_\epsilon}{\partial \nu_{\tilde g_E}}=S_{ni}\partial_iW_\epsilon-\frac{1}{2}S_{nn} \partial_nW_\epsilon=\frac{1}{2}S_{nn}\pa_nW_\e,
\end{equation}
where the last identity follows from $S_{an}=0$ on $\mathbb{R}^{n-1}$ due to the assumption that $V_n=0=\pa_nV_a$ on $\mathbb{R}^{n-1}$. 
Recall that the Christoffel symbols of $\tilde g_E$ are given by
\begin{align*}
\tilde\Gamma_{ab}^n=\frac{1}{2}\tilde g_E^{ni}\left[\pa_b (\tilde g_E)_{ai}+\pa_a(\tilde g_E)_{ib}-\pa_i(\tilde g_E)_{ab}\right]
\end{align*}
then
$$\frac{d}{dt}\Big|_{t=0}\tilde \Gamma_{ab}^n=-\frac{1}{2}\pa_n S_{ab},$$
due to $S_{an}=0$ on $\mathbb{R}^{n-1}$.
\iffalse
\textcolor{blue}{Since the Christoffel symbols of metric $\tilde g_E$ are defined by 
\begin{align*}
\tilde\Gamma_{ai}^a=\frac{1}{2}\tilde g_E^{ab}\left[\pa_i (\tilde g_E)_{ba}+\pa_a(\tilde g_E)_{bi}-\pa_b(\tilde g_E)_{ai}\right],
\end{align*}
then
$$\frac{d}{dt}\Big|_{t=0}\tilde \Gamma_{ai}^a=\frac{1}{2}\pa_i S_{aa}=-\frac{1}{2}\pa_i S_{nn},$$
where the last identity follows from $-S_{nn}=S_{aa}$ due to ${\rm tr S}=0$.}
\fi
From this and \eqref{eq:mean_curv_local}, we get
\begin{equation}\label{eq:var_mean_curv_near_Euclidean}
\frac{d}{dt}\Big|_{t=0}h_{\tilde g_E}=-\frac{1}{2(n-1)}\pa_n S_{aa}=\frac{1}{2(n-1)}\partial_n S_{nn},
\end{equation}
where the last identity follows from $-S_{nn}=S_{aa}$ due to $\mathrm{tr S}=0$.
Plugging \eqref{eq:var_normal_derivative} and \eqref{eq:var_mean_curv_near_Euclidean} into \eqref{eq:var_mean_curv_g_S}, we obtain
\begin{equation}\label{eq:var_h_1st}
\frac{d}{dt}\Big|_{t=0}h_{W_\epsilon^{\frac{4}{n-2}}\tilde g_E}=\frac{2}{n-2}W_\epsilon^{-\frac{n}{n-2}}\left(\frac{1}{2}\partial_{n}W_\epsilon S_{nn}+\frac{n-2}{4(n-1)}W_\epsilon\partial_n S_{nn}\right).
\end{equation}

On the other hand, using \eqref{eq:decom_S} we have
\begin{equation}\label{relation:Dh}
h_{g_\Sigma}'(W_\epsilon^{\frac{4}{n-2}}S)=h_{g_\Sigma}'(\mathcal{L}_V(g_\Sigma))-h_{g_\Sigma}'(\frac{4}{n-2}\psi W_\epsilon^{-1}g_\Sigma) \hbox{~~on~~} \mathbb{R}^{n-1}.
\end{equation}
First we assert that
\begin{equation}\label{est:h'_Lie_derivative}
h_{g_\Sigma}'(\mathcal{L}_V(g_\Sigma))=0 \qquad \hbox{~~on~~} \mathbb{R}^{n-1}.
\end{equation}
Next we compute
\begin{align}\label{eq:var_h_2nd}
&\frac{d}{dt}\Big|_{t=0}h_{(W_\epsilon-t\psi)^{\frac{4}{n-2}}g_E}=-\frac{2}{n-2}\frac{d}{dt}\Big|_{t=0}\left[(W_\epsilon-t\psi)^{-\frac{n}{n-2}}\partial_n(W_\epsilon-t\psi)\right]\no\\
=&\frac{2}{n-2}{W_\epsilon^{-\frac{n}{n-2}}}\left(\partial_n \psi-\frac{n}{n-2}W_\epsilon^{-1}\partial_n W_\epsilon \psi\right).
\end{align}
Therefore from \eqref{eq:var_h_1st}-\eqref{eq:var_h_2nd}, equation \eqref{eq:linearized_mean_curv} follows. 

It remains to show assertion \eqref{est:h'_Lie_derivative}. Define
\begin{align*}
\hat S_{ij}:=\mathcal{L}_V(g_\Sigma)_{ij}=(V_k\pa_k W_\e^{\frac{4}{n-2}})\delta_{ij}+W_\e^{\frac{4}{n-2}}(\pa_i V_j+\pa_j V_i).
\end{align*}
For brevity, we abuse $g=g_\Sigma$ for a while. Since $V_n=0=\pa_nV_a$ on $\mathbb{R}_+^n$, then $\hat S_{an}=0$ on $\mathbb{R}_+^n$.  Observe that
$$(\Gamma_{ab}^n)'=\frac{1}{2}g^{ni}(\nabla_b\hat S_{ia}+\nabla_a\hat S_{ib}-\nabla_i\hat S_{ab})=\frac{1}{2}W_\e^{-\frac{4}{n-2}}(\nabla_b\hat S_{na}+\nabla_a\hat S_{nb}-\nabla_n\hat S_{ab}),$$
then
\begin{align*}
g^{ab}(\Gamma_{ab}^n)'=&W_\e^{-\frac{4}{n-2}}\left[g^{ab}\hat S_{na,b}-\frac{1}{2}\pa_n {\rm tr}_g(\hat S)\right]\\
=&W_\e^{-\frac{4}{n-2}}\left[W_\e^{-\frac{4}{n-2}}\hat S_{na,a}-\frac{1}{2}\pa_n(W_\e^{-\frac{4}{n-2}}\hat S_{aa})\right].
\end{align*}
 We compute
\begin{align*}
\hat S_{na,a}=&\pa_a \hat S_{na}-\Gamma_{na}^i \hat S_{ia}-\Gamma_{aa}^i \hat S_{ni}\\
=&-\Gamma_{na}^b \hat S_{ba}-\Gamma_{aa}^n \hat S_{nn}\\
=&-2T_cW_\e^{\frac{2}{n-2}}[\hat S_{aa}-(n-1)\hat S_{nn}],
\end{align*}
where the last identity follows from
\begin{align*}
\Gamma_{na}^b=&\frac{1}{2}g^{bc}\pa_n g_{ca}=\frac{1}{2}W_\e^{-\frac{4}{n-2}}\pa_n W_\e^{\frac{4}{n-2}} \delta_{ab}=2T_cW_\e^{\frac{2}{n-2}}\delta_{ab},\\
\Gamma_{aa}^n=&-\frac{1}{2}g^{nn}\pa_n g_{aa}=-\frac{1}{2}W_\e^{-\frac{4}{n-2}}\pa_n W_\e^{\frac{4}{n-2}} \delta_{aa}=-2(n-1)T_cW_\e^{\frac{2}{n-2}}
\end{align*}
in virtue of \eqref{prob:half-space}. It follows from \eqref{eq:mean_curv_local} that
\begin{align*}
(n-1)(h_g)'(\hat S)=-\hat S^{ab}\pi_{ab}+\frac{n-1}{2}\frac{\hat S^{nn}}{g^{nn}}h_g+\frac{(\Gamma_{ab}^n)'}{\sqrt{g^{nn}}}g^{ab}.
\end{align*}
By \eqref{eq:bubble_Einstein} we get
\begin{align*}
&\pa_n\pa_a W_\e^{\frac{4}{n-2}}=\frac{4}{n-2}\left[\frac{6-n}{n-2}W_\e^{\frac{4}{n-2}-2}\pa_n W_\e \pa_a W_\e+\pa_n \pa_a W_\e\right]\\
=&\frac{4}{n-2}\frac{6}{n-2}W_\e^{\frac{4}{n-2}-2}\pa_n W_\e \pa_a W_\e=6T_c W_\e^{\frac{2}{n-2}}\pa_a W_\e^{\frac{4}{n-2}},
\end{align*}
whence
\begin{align*}
&\pa_n \hat S_{aa}
=\pa_n\left[(n-1)(V_k\pa_k W_\e^{\frac{4}{n-2}})+2W_\e^{\frac{4}{n-2}}\pa_a V_a\right]\\
=&(n-1)(V_a\pa_n\pa_a W_\e^{\frac{4}{n-2}}+\pa_n V_n \pa_n W_\e^{\frac{4}{n-2}})+2\pa_n W_\e^{\frac{4}{n-2}}\pa_a V_a\\
=&(n-1)T_cW_\e^{\frac{2}{n-2}}(6V_a \pa_a W_\e^{\frac{4}{n-2}}+4\pa_n V_n W_\e^{\frac{4}{n-2}})+8T_cW_\e^{\frac{6}{n-2}}\pa_a V_a.
\end{align*}
Then we have
\begin{align*}
&\pa_n(W_\e^{-\frac{4}{n-2}}\hat S_{aa})=W_\e^{-\frac{4}{n-2}}\pa_n \hat S_{aa}+\pa_n W_\e^{-\frac{4}{n-2}} \hat S_{aa}\\
=&(n-1)T_cW_\e^{-\frac{2}{n-2}}(6V_a \pa_a W_\e^{\frac{4}{n-2}}+4\pa_n V_n W_\e^{\frac{4}{n-2}})\\
&+8T_cW_\e^{\frac{2}{n-2}}\pa_a V_a-4T_c W_\e^{-\frac{2}{n-2}}\hat S_{aa}.
\end{align*}
Consequently, we obtain
\begin{align*}
&-T_c^{-1}W_\e^{\frac{6}{n-2}}g^{ab}(\Gamma_{ab}^n)'\\
=&2[\hat S_{aa}-(n-1)\hat S_{nn}]+(n-1)(3V_a \pa_a W_\e^{\frac{4}{n-2}}+2\pa_n V_n W_\e^{\frac{4}{n-2}})\\
&+4W_\e^{\frac{4}{n-2}}\pa_a V_a-2\hat S_{aa}\\
=&-2(n-1)\hat S_{nn}+4W_\e^{\frac{4}{n-2}}\pa_a V_a+(n-1)(3V_a \pa_a W_\e^{\frac{4}{n-2}}+2\pa_n V_n W_\e^{\frac{4}{n-2}}).
\end{align*}
Putting these facts together and using $\pi_{ab}=-2T_c g_{ab}$, we conclude that
\begin{align*}
&(n-1)T_c^{-1}W_\e^{\frac{4}{n-2}}(h_g)'(\hat S)\\
=&2\hat S_{aa}-(n-1)\hat S_{nn}+T_c^{-1}W_\e^{\frac{6}{n-2}}g^{ab}(\Gamma_{ab}^n)'\\
=&2\hat S_{aa}+(n-1)\hat S_{nn}-(n-1)(3V_a \pa_a W_\e^{\frac{4}{n-2}}+2\pa_n V_n W_\e^{\frac{4}{n-2}})-4W_\e^{\frac{4}{n-2}}\pa_a V_a\\
=&2\Big[(n-1)(V_a\pa_a W_\e^{\frac{4}{n-2}})+2W_\e^{\frac{4}{n-2}}\pa_a V_a\Big]\\
&+(n-1)\Big[(V_a\pa_a W_\e^{\frac{4}{n-2}})+2W_\e^{\frac{4}{n-2}}\pa_n V_n\Big]\\
&-(n-1)(3V_a \pa_a W_\e^{\frac{4}{n-2}}+2\pa_n V_n W_\e^{\frac{4}{n-2}})-4W_\e^{\frac{4}{n-2}}\pa_a V_a\\
=&0,
\end{align*}
which implies the desired assertion.
\end{proof}

\subsection{Test functions and their energy estimates}\label{Subsect5.2}

Let $\chi(y)=\chi(|y|)$ be a smooth cut-off function in $\overline{\mathbb{R}_+^n}$ with $\chi=1$ in $B_1^+$ and $\chi=0$ in $\mathbb{R}^n_+\backslash \overline{B_2^+}$. For any $\rho>0$, set $\chi_\rho(y)=\chi(|y|/\rho)$ for $y \in \mathbb{R}_+^n$. As in \cite{Brendle-Chen} and \cite{ChenSophie}, given $H_{ij}$
there exists a smooth vector field $V$ in $\overline{\mathbb{R}_+^n}$ such that
\begin{align}\label{eq:V}
\begin{cases}
\sum\limits_{i=1}^{n}\pa_i\left[\U^{\crit}\Big(\chi_{\rho}H_{ij}-\d_iV_j-\d_jV_i+\frac{2}{n}(\mathrm{div} V)\delta_{ij}\Big)\right]=0,&\mathrm{~~in~~}\:\Rn,
\\
\d_nV_a=V_n=0,&\mathrm{~~on~~}\:\mathbb{R}^{n-1},
\end{cases}
\end{align}
where $1\leq i,j \leq n, 1\leq a\leq n-1$. Moreover, there holds
\begin{align}\label{est:V}
|\d^{\b}V(y)|\leq C(n,T_c, |\b|)\sum_{a,b=1}^{n-1}\sum_{|\a|=1}^{d}|\pa^\alpha h_{ab}|(\e+|y|)^{|\a|+1-|\b|}.
\end{align}

We only sketch the proof of the construction of vector field $V$. Consider the spherical cap $(\Sigma,g_\Sigma)$ as in Proposition \ref{prop:linearized_eqs_smc} with $\e=1$. Define
\begin{align*}
\mathscr{X}=\{V \in H^1(\Sigma,g_\Sigma);  \langle V,\nu_{g_\Sigma}\rangle_{g_\Sigma}=0 \hbox{ for a vector field } V \text{ on }\partial \Sigma\}
\end{align*}
and $\mathscr{H}$ the space of all trace-free symmetric two-tensors on $\Sigma$ of class $L^2$. A conformal Killing operator $\mathcal{D}_{g_\Sigma}: \mathscr{X} \to \mathscr{H}$ on $\Sigma$ is defined as
$$\mathcal{D}_{g_\Sigma}V=\mathcal{L}_V(g_\Sigma)-\frac{2}{n}(\mathrm{div}_{g_\Sigma}V)g_\Sigma.$$
Similarly as in the appendix of \cite{Brendle-Chen}, we know that $\rm{ker}\mathcal{D}_{g_\Sigma}$  is finite dimensional. We define
$$\mathscr{X}_0=\{V\in \mathscr{X};\langle V,Z\rangle_{L^2(\Sigma,g_\Sigma)}=0, \forall\, Z\in \mathrm{ker}\mathcal{D}_{g_\Sigma}\}.$$
Using a similar argument in \cite[Proposition A.3]{Brendle-Chen}, we assert that for any symmetric two-tensor $\tilde h$ with compact support in $\mathbb{R}_+^n$ , there exists a unique vector field $V\in \mathscr{X}_0$ such that 
$$\langle W^{\frac{4}{n-2}}\tilde h-\mathcal{D}_{g_\Sigma}V,\mathcal{D}_{g_\Sigma}Z\rangle_{L^2(\Sigma,{g_\Sigma})}=0 \hbox{~~for all~~} Z\in \mathscr{X}.$$
Furthermore, with a dimensional constant $C$ there holds 
\begin{align*}
\|V\|_{L^2(\Sigma,g_\Sigma)}^2+\|\nabla V\|_{L^2(\Sigma,g_\Sigma)}^2\leq C\|W^{\frac{4}{n-2}}\tilde h\|_{L^2(\Sigma,g_\Sigma)}^2.
\end{align*}
Based on this estimate and using our $W$ instead, we can construct the vector field $V$ satisfying \eqref{eq:V} and estimate \eqref{est:V} by mimicking the proofs of \cite[Propositions 12-13]{ChenSophie}.

As in Proposition \ref{prop:linearized_eqs_smc}, we define symmetric trace-free 2-tensors $S$ and $T$ in $\overline \Rn$ by 
\begin{equation}\label{def:S:T}
S_{ij}=\d_iV_j+\d_jV_i-\frac{2}{n}\mathrm{div}V\delta_{ij}\quad\quad\text{and}\quad\quad T=H-S\,.
\end{equation}
It follows from (\ref{eq:V}) that $T$ satisfies
\begin{equation}\label{eq:U:T:1}
\U\d_jT_{ij}+\frac{2n}{n-2}\d_j\U T_{ij}=0\,,
\:\:\:\:\text{in}\:\:B^+_{2\rho}.
\end{equation}

For $n \geq 3$, we define an auxiliary function $\w=\w_{\e,\rho,{H}}$ 
by
\begin{equation}\label{eq:def:phi}
\w=\d_i\U V_i+\frac{n-2}{2n}\U \mathrm{div} V.
\end{equation}
When $n=3$, then $d=0$ and we choose $\psi=0$.
Using \eqref{est:V} and \eqref{eq:bdry_bubble} of $\U$, in $B_{2\rho}^+$ we have
\begin{align}\label{est:w}
|\w(y)|\leq C(n,T_c)\epsilon^{\frac{n-2}{2}}\sum_{a,b=1}^{n-1}\sum_{|\alpha|=1}^d |\pa^\alpha h_{ab}|(\epsilon+|y|)^{|\alpha|+2-n}.
\end{align}

By the above construction of $V$ and $H_{in}=0$ in $B_{2\rho}^+$, we know that
$$W_\epsilon\partial_i S_{ni}+\frac{2n}{n-2}\partial_i W_\epsilon S_{ni}=0 \hbox{~~in~~}B_{2\rho}^+$$
and $S_{na}=0$ on $D_{2\rho}$.
Thus we get
$$\partial_n S_{nn}=-\partial_a S_{na}-\frac{2n}{n-2}W_\epsilon^{-1}\partial_i W_\epsilon S_{ni}=-\frac{2n}{n-2}W_\epsilon^{-1}\partial_nW_\epsilon S_{nn} \mathrm{~~on~~} D_{2\rho}.$$
 Combining this and \eqref{eq:linearized_mean_curv},  we conclude that
$$\partial_n \psi-\frac{n}{n-2} W_\e^{-1}\partial_n W_\e \psi=-\frac{1}{2(n-1)}\partial_{n} W_\e S_{nn} \quad\mathrm{~~on~~}D_{2\rho}.$$
For future citation, we collect the linearized equations for scalar curvature and mean curvature in the following

\begin{lemma} \label{lem:phi}
The function $\w$ satisfies
\begin{align*}
\begin{cases}
\displaystyle \Delta \w+n(n+2)\U^{\frac{4}{n-2}}\w=\frac{n-2}{4(n-1)}\U \d_i\d_j S_{ij}+\d_i(\d_j\U S_{ij})&\mathrm{in~~}\:B_{2\rho}^+,\\
\displaystyle\d_n \w-\frac{n}{n-2}\U^{-1}\pa_n\U\w=-\frac{1}{2(n-1)}\pa_n\U S_{nn}&\mathrm{on~~}\:D_{2\rho}.
\end{cases}
\end{align*}
\end{lemma}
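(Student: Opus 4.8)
The plan is to obtain Lemma~\ref{lem:phi} directly from Proposition~\ref{prop:linearized_eqs_smc} combined with the divergence structure built into the construction of $V$. First I would check that the vector field $V$ produced by \eqref{eq:V} meets the hypotheses of Proposition~\ref{prop:linearized_eqs_smc}: the second line of \eqref{eq:V} says precisely $V_n=0=\d_nV_a$ on $\mathbb{R}^{n-1}$ for $1\le a\le n-1$. Moreover, the function $\w$ of \eqref{eq:def:phi} and the trace-free tensor $S$ of \eqref{def:S:T} coincide with the $\psi$ and $S_{ij}$ of Proposition~\ref{prop:linearized_eqs_smc}. Hence the interior identity \eqref{eq:linearized_scalar_curv}, valid on all of $\Rn$, yields at once the first equation of Lemma~\ref{lem:phi} on $B_{2\rho}^+$.

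For the boundary equation I would start from \eqref{eq:linearized_mean_curv}, which on $\mathbb{R}^{n-1}$ reads $\d_n\w-\frac{n}{n-2}\U^{-1}\d_n\U\,\w=\frac12\d_n\U\,S_{nn}+\frac{n-2}{4(n-1)}\U\,\d_nS_{nn}$, and then eliminate $\d_nS_{nn}$ on $D_{2\rho}$ using the identity satisfied by $S$. Since $H_{in}\equiv0$ by \eqref{propr:h} (as a polynomial in $y$, because all derivatives at $0$ of $h_{in}$ vanish), equation \eqref{eq:V} collapses for the index $j=n$ to $\d_i(\U^{\crit}S_{ni})=0$ on the full half-space, i.e. $\U\,\d_iS_{ni}+\frac{2n}{n-2}\d_i\U\,S_{ni}=0$; in particular this holds on $B_{2\rho}^+\cup D_{2\rho}$, so the cutoff $\chi_\rho$ plays no role here. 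On $D_{2\rho}$ one has $S_{na}=\d_nV_a+\d_aV_n=0$ (since $V_n$ and $\d_nV_a$ vanish on $\mathbb{R}^{n-1}$), hence also $\d_aS_{na}=0$ there because it is a tangential derivative of a function vanishing identically on the boundary. Splitting the sum over $i$ into tangential and normal parts therefore leaves $\U\,\d_nS_{nn}+\frac{2n}{n-2}\d_n\U\,S_{nn}=0$ on $D_{2\rho}$, that is $\d_nS_{nn}=-\frac{2n}{n-2}\U^{-1}\d_n\U\,S_{nn}$.

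Substituting this into the right-hand side of \eqref{eq:linearized_mean_curv} and collecting terms gives
$$
\frac12\d_n\U\,S_{nn}-\frac{n-2}{4(n-1)}\cdot\frac{2n}{n-2}\,\d_n\U\,S_{nn}
=\Big(\frac12-\frac{n}{2(n-1)}\Big)\d_n\U\,S_{nn}
=-\frac{1}{2(n-1)}\d_n\U\,S_{nn},
$$
which is exactly the claimed boundary equation on $D_{2\rho}$. In the degenerate case $n=3$ one has $d=0$, so $H\equiv0$, forcing $V\equiv0$ and $\w=0$; then $S\equiv0$ and both sides of both equations vanish, so the statement is trivial. The computation is entirely routine; the only points needing a little care are the observation that $H_{in}=0$ makes the divergence identity for $S_{ni}$ hold on the whole half-space (so no issue with the support of $\chi_\rho$ near $D_{2\rho}$), and the bookkeeping of the constants $\tfrac12$ and $-\tfrac{n}{2(n-1)}$; there is no genuine analytic obstacle beyond Proposition~\ref{prop:linearized_eqs_smc} itself.
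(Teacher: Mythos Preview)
Your proof is correct and follows essentially the same route as the paper: the interior equation is read off directly from Proposition~\ref{prop:linearized_eqs_smc}, and the boundary equation is obtained by combining \eqref{eq:linearized_mean_curv} with the relation $\d_nS_{nn}=-\frac{2n}{n-2}\U^{-1}\d_n\U\,S_{nn}$ on $D_{2\rho}$, which in turn comes from taking $j=n$ in \eqref{eq:V} and using $H_{in}\equiv0$ together with $S_{na}=0$ on the boundary. Your remark that $H_{in}\equiv0$ makes the divergence identity for $S_{ni}$ hold on all of $\Rn$ (so the cutoff is irrelevant for this index) is a nice clarification that the paper leaves implicit.
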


Similar to \cite[Proposition 5]{ChenSophie}, we collect and derive some properties associated to $S$ and $T$.
\begin{lemma}\label{lem:S_T}
\begin{enumerate}
\item [(1)] On~~$D_{2\rho}~$ there holds $S_{an}=0=T_{an}$~~for~~$0\leq a\leq n-1$.
\item [(2)] On $D_{2\rho}~$ there hold
\begin{align*}
\pa_n S_{nn}=&-\frac{2n}{n-2}\U^{-1}\pa_n\U S_{nn},\\
\pa_n S_{ab}=&-\frac{1}{n-1}\pa_nS_{nn}\delta_{ab},
\end{align*}
where $1\leq a,b\leq n-1$.
\end{enumerate}
\end{lemma}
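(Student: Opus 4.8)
The plan is to read both statements off from the definition \eqref{def:S:T} of $S$ and $T$, the boundary conditions $V_n=\d_nV_a=0$ on $\mathbb{R}^{n-1}$ built into \eqref{eq:V}, and the identity $H_{in}\equiv 0$ on $\overline{\Rn}$. The latter holds because $h_{in}\equiv 0$ by the third line of \eqref{propr:h}, so every derivative $\pa^\alpha h_{in}(0)$ vanishes and hence $H_{in}=\sum_{1\le|\alpha|\le d}\pa^\alpha h_{in}(0)\,y^\alpha\equiv 0$. Throughout I will use that $D_{2\rho}$ is relatively open in the hyperplane $\{y^n=0\}$, so that a function vanishing on $D_{2\rho}$ has vanishing tangential derivatives there, and that the boundary conditions on $V$ hold on all of $\mathbb{R}^{n-1}$, which makes it legitimate to differentiate them tangentially and to commute $\d_n$ with $\d_a$.

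For part (1), note that since $V$ is smooth up to $\mathbb{R}^{n-1}$ and $V_n\equiv 0$ there, we have $\d_aV_n=0$ on $\mathbb{R}^{n-1}$; combined with $\d_nV_a=0$ and $\delta_{an}=0$ for $1\le a\le n-1$, the formula $S_{an}=\d_aV_n+\d_nV_a-\tfrac{2}{n}(\mathrm{div}V)\delta_{an}$ gives $S_{an}=0$ on $D_{2\rho}$, whence $T_{an}=H_{an}-S_{an}=0$ on $D_{2\rho}$ as well. For the first identity of part (2), take $j=n$ in \eqref{eq:V}: since $H_{in}\equiv 0$, the term $\chi_\rho H_{in}$ disappears and we get $\sum_i\d_i(\U^{\crit}S_{in})=0$, that is $\U\,\d_iS_{in}+\tfrac{2n}{n-2}\,\d_i\U\,S_{in}=0$. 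Restricting to $D_{2\rho}$ and splitting the summation index into tangential ones and $n$, the terms $\d_aS_{an}$ (a tangential derivative of $S_{an}$) and $\d_a\U\,S_{an}$ both vanish by part (1), leaving $\U\,\d_nS_{nn}+\tfrac{2n}{n-2}\,\d_n\U\,S_{nn}=0$ on $D_{2\rho}$, which after dividing by $\U$ is the asserted identity. (Both of these were in fact already recorded in the computation preceding Lemma \ref{lem:phi}.)

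For the second identity of part (2), differentiate $S_{ab}=\d_aV_b+\d_bV_a-\tfrac{2}{n}(\mathrm{div}V)\delta_{ab}$ in the normal direction. Because $\d_nV_b\equiv 0$ on $\mathbb{R}^{n-1}$, we have $\d_n\d_aV_b=\d_a(\d_nV_b)=0$ and likewise $\d_n\d_bV_a=0$ on $D_{2\rho}$, so that $\d_nS_{ab}=-\tfrac{2}{n}\,\d_n(\mathrm{div}V)\,\delta_{ab}$ on $D_{2\rho}$. Tracing over $a=b$ from $1$ to $n-1$ and using that $S$ is trace-free, i.e. $\sum_{a=1}^{n-1}S_{aa}=-S_{nn}$, we get $-\d_nS_{nn}=-\tfrac{2(n-1)}{n}\,\d_n(\mathrm{div}V)$ on $D_{2\rho}$, hence $\d_n(\mathrm{div}V)=\tfrac{n}{2(n-1)}\d_nS_{nn}$ there; substituting this back produces $\d_nS_{ab}=-\tfrac{1}{n-1}\d_nS_{nn}\,\delta_{ab}$ on $D_{2\rho}$, as claimed.

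The lemma is essentially bookkeeping, so there is no substantial obstacle; the only point requiring a little care is the repeated interchange of $\d_n$ with tangential derivatives and the tangential differentiation of the boundary conditions on $V$, which is justified precisely because $V$ is smooth up to $\mathbb{R}^{n-1}$ and the conditions $V_n=\d_nV_a=0$ hold on the whole hyperplane $\{y^n=0\}$ rather than only on $D_{2\rho}$.
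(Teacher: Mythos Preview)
Your proof is correct and follows essentially the same route as the paper: the identities $S_{an}=0$ on $D_{2\rho}$ and $\U\,\d_nS_{nn}+\tfrac{2n}{n-2}\,\d_n\U\,S_{nn}=0$ are exactly the ones recorded in the text preceding Lemma~\ref{lem:phi}, and your derivation of the second identity in (2) by differentiating $S_{ab}$ normally, using $\d_n\d_aV_b=\d_a(\d_nV_b)=0$, and tracing against the trace-free condition is the natural direct computation (the paper simply cites \cite[Proposition~5]{ChenSophie} for this).
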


Based on Lemma \ref{lem:phi}, we rearrange \cite[Propositions 5-6]{Brendle2} as follows.

\begin{proposition}\label{prop:second_variation_identity} There holds
\begin{align*}
&\frac{1}{4}Q_{ik,j}Q_{ik,j}-\frac{1}{2}Q_{ki,k}Q_{li,l}+2\U^{\frac{2n}{n-2}}T_{ik}T_{ik}\\
=&\frac{1}{4}\U^2\partial_lH_{ik}\partial_lH_{ik}-\frac{2(n-1)}{n-2}  \partial_k\U\partial_l\U H_{ik}H_{il}-2\U\partial_k\U H_{ik}\partial_lH_{il}\\
&-\frac{1}{2}\U^2\partial_kH_{ik}\partial_lH_{il}+\frac{8(n-1)}{n-2}\partial_i\U\partial_k\w H_{ik}-\frac{4(n-1)}{n-2}|\nabla\w|^2\\
&+\frac{4(n-1)}{n-2}n(n+2)\U^{\frac{4}{n-2}}\w^2-2\U\w\pa_i\pa_k H_{ik}+\mathrm{div} \xi,
\end{align*}
where 
\begin{align*}
Q_{ij,k}=\U \partial_kT_{ij}+\frac{2}{n-2}(\partial_l\U T_{il}\delta_{jk}+\partial_l\U T_{jl}\delta_{ik}-\partial_i\U T_{jk}-\partial_j\U T_{ik})
\end{align*}
and the vector field $\xi$ is  given by
\begin{align}\label{eq:xi}
\xi_i=&2 \U \w \pa_k H_{ik}-2\U\pa_k \w H_{ik}-2\pa_k \U\w H_{ik}-\frac{1}{2}\U^2\pa_i S_{lk}H_{lk}\no\\
&+\U^2\pa_l S_{kl}H_{ik}+2\U \pa_l \U S_{kl} H_{ik}-\U \w \pa_k S_{ik}+\U \pa_k \w S_{ik}\no\\
&+\pa_k \U \w S_{ik}+\frac{1}{4}\U^2 \pa_i S_{lk}S_{lk}-\frac{1}{2}\U^2 \pa_l S_{kl}S_{ik}-\U \pa_l \U S_{kl} S_{ik}\no\\
&-\frac{4(n-1)}{n-2}\pa_k \U \w S_{ik}+\frac{4(n-1)}{n-2}\w \pa_i \w-\frac{2}{n-2}\U \pa_k \U T_{lk}T_{il}.
\end{align}
In particular, it yields
\begin{align}\label{rel:bdr_deri_xi}
\xi_n=-\frac{n+2}{2(n-2)}\U \pa_n\U S_{nn}^2+\frac{4n(n-1)}{(n-2)^2}\U^{-1}\pa_n\U\w^2
\end{align}
on $\mathbb{R}^{n-1}$.
\end{proposition}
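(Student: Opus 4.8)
The plan is to derive the identity as a purely pointwise algebraic consequence of the linearized equations for scalar and mean curvature collected in Lemma~\ref{lem:phi}, the divergence relation \eqref{eq:U:T:1} satisfied by $T$ in $B_{2\rho}^+$, the trace-free Einstein-type identity \eqref{eq:bubble_Einstein} for $\U$, and the splitting $T=H-S$ from \eqref{def:S:T}; it amounts to a rearrangement of Brendle's interior computation \cite[Propositions~5--6]{Brendle2}, adapted to the presence of a boundary as in S.~Chen \cite[Proposition~5]{ChenSophie}, the genuinely new point being the boundary value \eqref{rel:bdr_deri_xi}. As a first step I would simplify the left-hand side: writing out $Q_{ki,k}$ from the definition of $Q_{ij,k}$, using that $T$ is trace-free, and relabelling, the four $\partial\U\cdot T$ contractions collapse and one is left with $Q_{ki,k}=\U\,\partial_kT_{ik}+\tfrac{2n}{n-2}\partial_k\U\,T_{ik}$, which vanishes in $B_{2\rho}^+$ by \eqref{eq:U:T:1}; hence the left-hand side there reduces to $\tfrac14 Q_{ik,j}Q_{ik,j}+2\U^{\crit}T_{ik}T_{ik}$.

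Next I would expand $Q_{ik,j}Q_{ik,j}$ from the definition of $Q$, obtaining the leading term $\U^2\partial_jT_{ik}\partial_jT_{ik}$, cross terms of schematic type $\U\,\partial\U\,T\,\partial T$, and terms quadratic in $\partial\U\cdot T$. Substituting $T=H-S$ everywhere, I would eliminate $S$ systematically: its first and second divergences are rewritten through the conformal-Killing structure and through the scalar-curvature linearization of Lemma~\ref{lem:phi}, which expresses $\tfrac{n-2}{4(n-1)}\U\,\partial_i\partial_jS_{ij}+\partial_i(\partial_j\U\,S_{ij})$ in terms of $\Delta\w$ and $\U^{4/(n-2)}\w$, while \eqref{eq:bubble_Einstein} converts second derivatives of $\U$ into products of first derivatives. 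Regrouping, the terms genuinely quadratic in $H$, the $H$--$\w$ coupling terms, and the $\w$--$\w$ terms assemble into exactly the eight explicit terms on the right-hand side, and all remaining terms organize into a pointwise divergence; tracking the integrations by parts carefully produces precisely the vector field $\xi$ of \eqref{eq:xi}. This bookkeeping is the heart of the proof, and I would carry it out term by term with the conventions of \cite[Propositions~5--6]{Brendle2} and \cite[Proposition~5]{ChenSophie}, to which the interior computation reduces once one verifies that the definitions of $Q$, $S$, $T$, $\w$, $\xi$ match.

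Finally I would compute $\xi_n$ on $\mathbb{R}^{n-1}$. Restricting \eqref{eq:xi} to $\{y^n=0\}$ and using $H_{in}\equiv0$ in $B_{2\rho}^+$ (a property of the conformal Fermi coordinates and of the construction \eqref{eq:V} of $V$), together with $\mathrm{tr}\,H=0$ so that $H_{aa}=0$, kills every term of $\xi_n$ containing $H$; part~(1) of Lemma~\ref{lem:S_T} gives $S_{na}=T_{na}=0$ on $D_{2\rho}$, hence $S_{nk}=S_{nn}\delta_{nk}$ and $T_{nk}=-S_{nn}\delta_{nk}$ there; part~(2) gives $\partial_nS_{nn}=-\tfrac{2n}{n-2}\U^{-1}\partial_n\U\,S_{nn}$ and $\partial_nS_{ab}=-\tfrac1{n-1}\partial_nS_{nn}\delta_{ab}$; and the boundary equation of Lemma~\ref{lem:phi}, $\partial_n\w-\tfrac{n}{n-2}\U^{-1}\partial_n\U\,\w=-\tfrac1{2(n-1)}\partial_n\U\,S_{nn}$, disposes of $\partial_n\w$. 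After these substitutions all $\w\,S_{nn}$ terms cancel, the $S_{nn}^2$ terms combine with total coefficient $-\tfrac{n+2}{2(n-2)}$, and the $\w^2$ term survives with coefficient $\tfrac{4n(n-1)}{(n-2)^2}$, giving \eqref{rel:bdr_deri_xi}. I expect the interior bookkeeping to be the main obstacle --- there are many terms, and the energy estimates in Section~\ref{Subsect5.2} use the \emph{exact} form of both the right-hand side and of $\xi$, so the reorganization must be completely explicit rather than up to controlled error --- whereas the boundary computation is short once Lemmas~\ref{lem:phi} and~\ref{lem:S_T} are in hand.
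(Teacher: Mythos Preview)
Your proposal is correct and follows essentially the same approach as the paper: the paper itself does not prove the interior identity but simply states that it is a rearrangement of \cite[Propositions~5--6]{Brendle2} based on Lemma~\ref{lem:phi}, and you outline exactly this, together with a correct and more explicit derivation of the boundary value \eqref{rel:bdr_deri_xi} from Lemmas~\ref{lem:phi} and~\ref{lem:S_T}.
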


\begin{proposition}\label{prop:Q_and_h}
There exists $\lambda^*=\lambda^*(n,T_c)>0$ such that 
\begin{align*}
&\lambda^*\epsilon^{n-2}\sum_{i,j=1}^n\sum_{|\alpha|=1}^d |\pa^\alpha h_{ij}|^2\int_{B_\rho^+(0)}(\epsilon+|y|)^{2|\alpha|+2-2n}dy\\
\leq & \frac{1}{4}\int_{B_\rho^+(0)}Q_{ij,k}Q_{ij,k}dy-\frac{n^2}{2(n-1)(n-2)}\int_{D_\rho(0)}\partial_n \U\U S_{nn}^2 d\sigma
\end{align*}
for all $\rho\geq 2\epsilon$.
\end{proposition}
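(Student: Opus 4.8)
The plan is to integrate the pointwise second-variation identity of Proposition~\ref{prop:second_variation_identity} over $B_\rho^+(0)$ and read off the stated coercivity, the boundary term $\tfrac{n^2}{2(n-1)(n-2)}\int_{D_\rho}\pa_n\U\,\U\,S_{nn}^2$ being precisely the correction that makes the contribution of the flat boundary $D_\rho$ cooperate. I would first reduce to $\e=1$: the claimed inequality is scale covariant under $y\mapsto y/\e$ (with $\U$ and $V,S,T,Q,\w$ rescaling accordingly and $\rho\ge2\e$ becoming $\rho/\e\ge2$), so it suffices to treat $W:=W_1$. Next, a direct computation from equation \eqref{eq:U:T:1} for $T$ in $B_{2\rho}^+$ together with $\mathrm{tr}\,T=0$ shows that the ``divergence'' $Q_{ki,k}$ vanishes identically in $B_{2\rho}^+$; hence integrating the identity of Proposition~\ref{prop:second_variation_identity} over $B_\rho^+$ and writing $\int_{B_\rho^+}\mathrm{div}\,\xi\,dy=\int_{\pa^+ B_\rho^+}\xi\cdot\nu\,d\sigma+\int_{D_\rho}\xi\cdot\nu\,d\sigma$ yields
\begin{equation*}
\tfrac14\int_{B_\rho^+}Q_{ij,k}Q_{ij,k}\,dy+2\int_{B_\rho^+}\U^{\crit}T_{ij}T_{ij}\,dy=\int_{B_\rho^+}\mathcal R_0\,dy+\int_{\pa^+ B_\rho^+}\xi\cdot\nu\,d\sigma+\int_{D_\rho}\xi\cdot\nu\,d\sigma,
\end{equation*}
where $\mathcal R_0$ collects the bulk terms on the right-hand side of Proposition~\ref{prop:second_variation_identity}. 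On $D_\rho$ the outer normal is $-\mathbf{e}_n$, so by \eqref{rel:bdr_deri_xi} and Lemma~\ref{lem:S_T} the $D_\rho$-integral is an explicit combination of $\int_{D_\rho}\U\pa_n\U\,S_{nn}^2\,d\sigma$ and $\int_{D_\rho}\U^{-1}\pa_n\U\,\w^2\,d\sigma$, and a short arithmetic check shows that subtracting $\tfrac{n^2}{2(n-1)(n-2)}\int_{D_\rho}\pa_n\U\,\U\,S_{nn}^2\,d\sigma$ leaves the $S_{nn}^2$-terms with coefficient $\tfrac{1}{2(n-1)}$ and the $\w^2$-term with a sign that is favorable because $\pa_n\U<0$ on $\mathbb{R}^{n-1}$.

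It then remains to bound $\int_{B_\rho^+}\mathcal R_0\,dy-2\int_{B_\rho^+}\U^{\crit}T_{ij}T_{ij}\,dy$, together with the $\pa^+ B_\rho^+$-terms and the leftover $D_\rho$-term, from below by $\lambda^*\sum_{i,j,\alpha}|\pa^\alpha h_{ij}|^2\int_{B_\rho^+}(1+|y|)^{2|\alpha|+2-2n}dy$. For this I would: (i) use the linearized equation of Lemma~\ref{lem:phi} for $\w$ and integrate by parts to remove $\w$ from the interior, absorbing the new $\pa^+ B_\rho^+$-terms into the $\xi$-boundary terms, so that the interior integrand becomes a quadratic form in $H$ and $T=H-S$ whose leading piece $\tfrac14\U^2|\pa H|^2$ has exactly the weighted order $(1+|y|)^{2|\alpha|+2-2n}$ of the right-hand side; (ii) estimate every $\pa^+ B_\rho^+$-integral by $C\rho^{2|\alpha|+2-n}\sum|\pa^\alpha h_{ij}|^2$ via the decay bounds \eqref{est:V} and \eqref{est:w}, which is negligible against the interior weight since $2|\alpha|+2-n\le0$ for $|\alpha|\le d$; and (iii) decompose $H$, hence $S,T,\w$, into homogeneous pieces and into spherical harmonics adapted to the reflection across $\{y^n=0\}$, reducing the positivity to a finite family of one-variable problems treated as in \cite{Brendle2} and \cite{ChenSophie}. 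The transverse-traceless property of $T$ (from the construction of $V$ in \eqref{eq:V}) is what kills the would-be zero modes, and the restriction $d=[(n-2)/2]$ is exactly what guarantees that the remaining modes all contribute a strictly positive amount.

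The main obstacle is step (iii): proving this mode-by-mode positivity with a constant $\lambda^*$ that is uniform in $\rho\ge2\e$. In even dimensions the top Taylor degree $|\alpha|=d=(n-2)/2$ is borderline, since there $\int_{B_\rho^+}(1+|y|)^{-n}dy\sim\tfrac12\omega_{n-1}\log\rho$ diverges; one must check that the quadratic form stays coercive up to and including that degree and that the $\pa^+ B_\rho^+$-errors, which are only $O(1)$ there, are absorbed upon taking $\lambda^*$ small enough. In all other cases the weighted integrals converge and the argument is softer.
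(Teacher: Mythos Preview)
Your strategy has a structural issue: you are trying to extract the $Q$-coercivity of Proposition~\ref{prop:Q_and_h} from the second-variation identity of Proposition~\ref{prop:second_variation_identity}, but in the paper's logic these are two \emph{independent} ingredients, both fed later into the energy estimate of Proposition~\ref{prop:near_pole}. The identity of Proposition~\ref{prop:second_variation_identity} (which repackages \cite[Propositions~5--6]{Brendle2}) relates $|Q|^2$ to a quadratic expression in $H$ and $\psi$ with terms of both signs; integrating it and using $Q_{ki,k}=0$ leaves you with $\tfrac14\int|Q|^2=-2\int\U^{\crit}|T|^2+\int\mathcal R_0+\text{(boundary)}$, so the quantity you must bound below is $\int\mathcal R_0-2\int\U^{\crit}|T|^2+\text{(boundary)}$, which has no evident sign. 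Your step~(i), integrating by parts via Lemma~\ref{lem:phi} to ``remove $\psi$ from the interior'', is essentially undoing the derivation of Proposition~\ref{prop:second_variation_identity}; you would then have to prove coercivity of a quadratic form in $H$ and the nonlocally determined $\psi$, which is not visibly easier than the original statement and is not what the cited references do.

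The paper instead appeals to \cite[Lemma~3.4 and Proposition~3.5]{almaraz5} (which in turn adapt \cite[Propositions~7--9 and Corollary~10]{Brendle2} and the analogous results of \cite{ChenSophie}). These rest on a \emph{different} pointwise algebraic identity: $\tfrac14 Q_{ij,k}Q_{ij,k}$ is rewritten directly as a sum of manifestly nonnegative squares built from $T$ and $\nabla\U$, plus a divergence. Upon integration over $B_\rho^+$, the $D_\rho$-contribution of that divergence is controlled using only the sign of $\pa_n\U$ on $D_\rho$ and the boundary relations of Lemma~\ref{lem:S_T} (exactly the two inputs the paper singles out), producing the term $\tfrac{n^2}{2(n-1)(n-2)}\int_{D_\rho}\pa_n\U\,\U\,S_{nn}^2\,d\sigma$ on the left. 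One is then left with a nonnegative quadratic form in $T$, and only at that point does the mode analysis of your step~(iii) enter --- applied to the $T$-quadratic form, not to $\mathcal R_0$ --- with \eqref{eq:U:T:1} killing the kernel and the degree restriction $|\alpha|\le d$ ensuring strict positivity. Your description of this endgame and of the borderline behaviour at $|\alpha|=d$ is correct; the gap is that you have routed through the wrong identity to reach it.
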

\begin{proof}
Since only the unchanged sign condition of $\pa_n\U$ on $D_\rho$ and Lemma \ref{lem:S_T} were required in \cite[Lemma 3.4]{almaraz5}, we refer to similar arguments in \cite[Proposition 3.5]{almaraz5} for the details.
\end{proof}

Our test function is 
\begin{equation}\label{eq:test_fcn}
\ubar=[\chi_\rho(\U+\w)]\circ\Psi_{x_0}^{-1}+(1-\chi_\rho)\circ\Psi_{x_0}^{-1}\epsilon^{\frac{n-2}{2}}G,
\end{equation}
where $G=G_{x_0}$ is the Green's function of the conformal Laplacian with pole at $x_0\in \partial M$, coupled with a boundary condition, namely
\begin{align}\label{def:Green_funct}
\begin{cases}
\displaystyle -\frac{4(n-1)}{n-2}\Delta_{g_{x_0}}G_{x_0}+R_{g_{x_0}}G_{x_0}=0\,,&\text{in}\:M\backslash \{x_0\}\,,
\\
\displaystyle \frac{2}{n-2}\frac{\partial G_{x_0}}{\partial \nu_{g_{x_0}}} +h_{g_{x_0}}G_{x_0}=0\,,&\text{on}\:\d M\backslash \{x_0\}\,.
\end{cases}
\end{align}
We assume that $G$ is normalized such that $\lim_{y\to 0}G(\Psi_{x_0}(y))|y|^{n-2}=1$. Then $G$ satisfies the following estimates near $x_0$, namely for sufficiently small $|y|$ (see \cite[Proposition B-2]{Almaraz-Sun}):
\begin{align}\label{est:Green_funct}
&|G(\Psi_{x_0}(y))-|y|^{2-n}|\no\\
\leq& C\sum_{a,b=1}^{n-1}\sum_{|\alpha|=1}^d |\pa^\alpha h_{ab}||y|^{|\alpha|+2-n}+
\begin{cases}
C|y|^{d+3-n}, \quad~~\text{~~if~~}\:n\geq 5,\\
C(1+|\log|y||),\text{~~if~~}\:n=3,4,
\end{cases}\no\\
&|\nabla (G(\Psi_{x_0}(y))-|y|^{2-n})|\leq C\sum_{a,b=1}^{n-1}\sum_{|\alpha|=1}^d |\pa^\alpha h_{ab}||y|^{|\alpha|+1-n}+C|y|^{d+2-n}.
\end{align}
 Moreover, there holds
\begin{align*}
C(T_c,n)^{-1}\epsilon^{\frac{n-2}{2}}(\epsilon+|y|)^{2-n}\leq \U(y)\leq C(T_c,n)\epsilon^{\frac{n-2}{2}}(\epsilon+|y|)^{2-n}.
\end{align*}

We consider the flux integral as in \cite[P.1006]{Brendle-Chen}
\begin{align*}
\mathcal{I}(x_0,\rho)=&-\int_{\pa^+B_\rho^+}|y|^{2-2n}(|y|^{2}\pa_j h_{ij}-2n y^j h_{ij})\frac{y^i}{|y|}d\sigma\\
&+\frac{4(n-1)}{n-2}\int_{\partial^+ B_\rho^+}(|y|^{2-n}\partial_iG-G\partial_i|y|^{2-n})\frac{y^i}{|y|}d\sigma
\end{align*}
for $x_0\in \partial M$ and all sufficiently small $\rho>0$.

The following estimates on the expansion of scalar curvature could be found in \cite[P. 2645]{almaraz5},  which follows from  \cite[Proposition 11]{Brendle2} and \cite[Proposition 3]{ChenSophie}. Just keep in mind that the boundary is not necessarily umbilic here.
\begin{proposition}\label{prop:expan_scalar}
The scalar curvature $R_{g_{x_0}}$ satisfies
\begin{align*}
&|R_{g_{x_0}}-\pa_i\pa_kH_{ik}|\leq C\sum_{a,b=1}^{n-1}\sum_{ |\alpha|=1}^d |\pa^\alpha h_{ab}||y|^{|\alpha|-1}+C|y|^{d-1},\\
&\left|R_{g_{x_0}}-\pa_i\pa_k h_{ik}+\pa_k(H_{ik}\pa_lH_{il})-\frac{1}{2}\pa_k H_{ik} \pa_l H_{il}+\frac{1}{4}\pa_lH_{ik}\pa_lH_{ik}\right|\\
\leq &C\sum_{a,b=1}^{n-1}\sum_{|\alpha|=1}^d |\pa^\alpha h_{ab}|^2|y|^{2|\alpha|-1}+C\sum_{a,b=1}^{n-1}\sum_{|\alpha|=1}^d |\pa^\alpha h_{ab}||y|^{|\alpha|+d-1}+C|y|^{2d}
\end{align*}
for $|y|$ sufficiently small.
\end{proposition}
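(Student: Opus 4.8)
The plan is to obtain both inequalities directly from a second-order Taylor expansion in $h$ of the scalar curvature of $g_{x_0}=\exp(h)$ in the conformal Fermi coordinates, using only the structural properties \eqref{propr:h}. I emphasize at the outset that this is purely an interior computation on $B_{2\rho}^+$: umbilicity of $\partial M$ never enters, because the only facts about $h$ that are used are $h_{ij}(0)=0$, $h_{in}\equiv 0$, $\partial_a h_{bc}(0)=0$ and $\mathrm{tr}\,h=O(|y|^{2d+2})$ (the last from $\det g_{x_0}=e^{\mathrm{tr}\,h}=1+O(|y|^{2d+2})$), and the second fundamental form only affects the values $\partial_n h_{ab}(0)$, which the curvature formula treats on the same footing as all other first derivatives of the metric. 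Hence the computation is exactly the one carried out in \cite[Proposition 11]{Brendle2} and \cite[Proposition 3]{ChenSophie}; I only recall its shape. Writing $(g_{x_0})_{ij}=\delta_{ij}+h_{ij}+\tfrac12(h^2)_{ij}+O(|h|^3)$ and $(g_{x_0})^{ij}=\delta_{ij}-h_{ij}+\tfrac12(h^2)_{ij}+O(|h|^3)$ and inserting these into $R_g=g^{ik}\big(\partial_j\Gamma_{ik}^j-\partial_i\Gamma_{jk}^j+\Gamma_{jl}^j\Gamma_{ik}^l-\Gamma_{il}^j\Gamma_{jk}^l\big)$, and recalling that $R$ is second order in the metric, the term of order $k\ge 2$ in $h$ is a finite sum of products of the schematic form $h^{k-2}\,\partial h\,\partial h$ or $h^{k-1}\,\partial^2 h$.

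For the first inequality, the part of $R_g$ linear in $h$ is the linearized scalar curvature $\partial_i\partial_k h_{ik}-\Delta(\mathrm{tr}\,h)$; here $\Delta(\mathrm{tr}\,h)=O(|y|^{2d})$, and replacing $h$ by its degree-$d$ Taylor polynomial $H$ costs only $|\partial_i\partial_k(h_{ik}-H_{ik})|=O(|y|^{d-1})$ by Taylor's theorem with remainder. For the contribution of order $\ge 2$ one uses the pointwise bounds $|h(y)|\le C\sum_{a,b}\sum_{|\alpha|=1}^d|\partial^\alpha h_{ab}||y|^{|\alpha|}+C|y|^{d+1}$ and $|\partial h(y)|\le C\sum_{a,b}\sum_{|\alpha|=1}^d|\partial^\alpha h_{ab}||y|^{|\alpha|-1}+C|y|^{d}$ (obtained by Taylor-expanding $h$ and $\partial h$ at $0$ and using \eqref{propr:h}), which make each schematic product $h^{k-2}\partial h\,\partial h$ and $h^{k-1}\partial^2 h$ with $k\ge 2$ dominated, for small $|y|$, by $C\sum_{a,b}\sum_{|\alpha|=1}^d|\partial^\alpha h_{ab}||y|^{|\alpha|-1}+C|y|^{d-1}$, since at least one factor can always be estimated by an absolute constant and the sum $\sum_{|\alpha|=1}^d|\partial^\alpha h_{ab}||y|^{|\alpha|-1}$ is itself bounded near $0$. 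This yields the first inequality.

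For the second inequality one retains the quadratic part explicitly: a direct computation (as in the references) shows that, modulo terms containing $\mathrm{tr}\,h$ — which are $O(|y|^{2d})$ here — the order-$2$ part of $R_g$ equals $-\partial_k(h_{ik}\partial_l h_{il})+\tfrac12\partial_k h_{ik}\partial_l h_{il}-\tfrac14\partial_l h_{ik}\partial_l h_{ik}$. Replacing $h$ by $H$ inside this quadratic expression produces an error consisting of products of a derivative of $h-H$ of order $\le 2$ with a derivative of $H$ of order $\le 1$, together with products of two derivatives of $h-H$; using $\partial^m(h-H)=O(|y|^{d+1-m})$ and the bounds $|\partial^m H(y)|\le C\sum_{a,b}\sum_{|\alpha|=m}^d|\partial^\alpha h_{ab}||y|^{|\alpha|-m}$, this error is controlled by $C\sum_{a,b}\sum_{|\alpha|=1}^d|\partial^\alpha h_{ab}||y|^{|\alpha|+d-1}+C|y|^{2d}$. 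Finally the terms of order $\ge 3$ carry an extra undifferentiated factor $h=O(|y|)$ (or more); combining the pointwise bounds with the elementary inequality $|\partial^\alpha h_{ab}||\partial^\beta h_{cd}||y|^{|\alpha|+|\beta|-1}\le |\partial^\alpha h_{ab}|^2|y|^{2|\alpha|-1}+|\partial^\beta h_{cd}|^2|y|^{2|\beta|-1}$ one finds they are dominated by $C\sum_{a,b}\sum_{|\alpha|=1}^d|\partial^\alpha h_{ab}|^2|y|^{2|\alpha|-1}+C|y|^{2d}$ (the remainder-mixed pieces landing in $C\sum_{a,b}\sum_{|\alpha|=1}^d|\partial^\alpha h_{ab}||y|^{|\alpha|+d-1}$ or $C|y|^{2d}$). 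Adding the three error contributions and noting $\partial_i\partial_k h_{ik}$ is kept unreplaced yields the second inequality.

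The only real difficulty is bookkeeping. One must carry out the second-order expansion of $R_g$ cleanly enough to see the quadratic part collapse to exactly $-\partial_k(h_{ik}\partial_l h_{il})+\tfrac12\partial_k h_{ik}\partial_l h_{il}-\tfrac14\partial_l h_{ik}\partial_l h_{ik}$ after discarding the $\mathrm{tr}\,h$ pieces, and then patiently verify that every discarded term — the trace terms, the $h\to H$ replacement errors, and all cubic-and-higher contributions — falls into one of the three buckets $C\sum_{a,b}\sum_{|\alpha|=1}^d|\partial^\alpha h_{ab}|^2|y|^{2|\alpha|-1}$, $C\sum_{a,b}\sum_{|\alpha|=1}^d|\partial^\alpha h_{ab}||y|^{|\alpha|+d-1}$, $C|y|^{2d}$. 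It is also worth checking explicitly that no step secretly uses $h_{in}\equiv 0$ or $\partial_a h_{bc}(0)=0$ in a place where only the weaker decay of $h$ is available, so that the estimates are genuinely insensitive to whether $\partial M$ is umbilic.
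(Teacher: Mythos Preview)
Your proposal is correct and follows exactly the approach the paper indicates: the paper does not give a self-contained proof but simply refers to \cite[p.~2645]{almaraz5}, \cite[Proposition~11]{Brendle2} and \cite[Proposition~3]{ChenSophie}, with the remark that umbilicity of $\partial M$ is not needed. Your sketch reproduces that computation and makes the same point explicitly, so there is nothing to add.
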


In order to prove this theorem, we need to estimate the energy $E[\ubar]$. Notice that
\begin{align*}
E[\ubar]=&\int_M \left(\frac{4(n-1)}{n-2}|\nabla\ubar|_{g_{x_0}}^2+R_{g_{x_0}}\ubar^2\right)d\mu_{g_{x_0}}\\
&+2(n-1)\int_{\pa M}h_{g_{x_0}}\ubar^2 d\sigma_{g_{x_0}}.
\end{align*}
We will estimate $E[\ubar]$ in $\Psi_{x_0}(B_\rho^+)$ and $M\backslash \Psi_{x_0}(B_\rho^+)$ respectively. 
\begin{proposition}\label{prop:near_pole}  With some sufficiently small $\rho_0>0$, there holds
\begin{align*}
&\int_{B^+_\rho}\left[\frac{4(n-1)}{n-2}|\nabla(\U+\w)|_{g_{x_0}}^2+R_{g_{x_0}}(\U+\w)^2\right]dy\\
&+2(n-1)\int_{D_\rho}h_{g_{x_0}}(\U+\w)^2d\sigma\\
\leq &4n(n-1)\int_{B^+_\rho}\U^{\frac{4}{n-2}}\left(\U^2+\frac{n+2}{n-2}\w^2\right)dy\\
&+\int_{\partial^+B_\rho^+}\frac{4(n-1)}{n-2}\partial_i\U\U\frac{y^i}{|y|}d\sigma+\int_{\pa^+B_\rho^+}(\U^2\pa_j h_{ij}-\pa_j\U^2 h_{ij})\frac{y^i}{|y|}d\sigma\\
&-4(n-1)T_c\int_{D_\rho}\U^{\frac{2}{n-2}}\Big(\U^2+2\U\w+\frac{n}{n-2}\w^2-\frac{n-2}{8(n-1)^2}\U^2S_{nn}^2\Big)d\sigma\\
&-\frac{1}{2}\lambda^* \sum_{a,b=1}^{n-1}\sum_{|\alpha|=1}^d |\pa^\alpha h_{ab}|^2\epsilon^{n-2}\int_{B_\rho^+}(\epsilon+|y|)^{2|\alpha|+2-2n}dy\\
&+C\sum_{a,b=1}^{n-1}\sum_{|\alpha|=1}^d |\pa^\alpha h_{ab}|\epsilon^{n-2} \rho^{|\alpha|+2-n}+C\epsilon^{n-2} \rho^{2d+4-n}
\end{align*}
for $0<2\epsilon<\rho<\rho_0\leq 1$, where $\rho_0$  and $C$ are some constants depending only on $n,T_c,g_0$.
\end{proposition}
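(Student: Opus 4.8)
The plan is to expand $E[\ubar]$ restricted to $\Psi_{x_0}(B_\rho^+)$ in the conformal Fermi coordinates, using $g_{x_0}=\exp(h)$ together with the structure equations \eqref{propr:h}, \eqref{mean_curv_Fermi} and the curvature expansion in Proposition \ref{prop:expan_scalar}, and then collect error terms via the pointwise bounds \eqref{est:V}, \eqref{est:w}. First I would write the Dirichlet energy density $|\nabla(\U+\w)|_{g_{x_0}}^2=g_{x_0}^{ij}\pa_i(\U+\w)\pa_j(\U+\w)$, replace $g_{x_0}^{ij}$ by $\delta_{ij}-H_{ij}+(\text{quadratic in }H)$, and separate the leading Euclidean piece from the $H$-linear and $H$-quadratic pieces. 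Integrating the leading piece by parts produces the bulk term $\tfrac{4(n-1)}{n-2}\int \U\Delta\U$-type contributions, which via \eqref{prob:half-space} become the $4n(n-1)\int_{B_\rho^+}\U^{4/(n-2)}\U^2$ main term plus the boundary flux $\int_{\partial^+B_\rho^+}\tfrac{4(n-1)}{n-2}\pa_i\U\,\U\,\tfrac{y^i}{|y|}\,d\sigma$ and the interior-boundary integral $-4(n-1)T_c\int_{D_\rho}\U^{2/(n-2)}(\cdots)$ once the Neumann-type boundary behavior of $\U$ from \eqref{prob:half-space} is inserted. The mixed $\U\w$ and $\w^2$ terms are handled by invoking Lemma \ref{lem:phi}: multiplying the bulk linearized equation by $\w$ and integrating by parts converts $\int n(n+2)\U^{4/(n-2)}\w^2$ and $\int|\nabla\w|^2$ into the claimed $\tfrac{n+2}{n-2}\w^2$ coefficient, while the boundary linearized equation on $D_{2\rho}$ supplies the $\tfrac{n-2}{8(n-1)^2}\U^2S_{nn}^2$ correction on $D_\rho$ and the cross term $-\tfrac12\U^2\pa_j H_{ij}$, etc.

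The heart of the estimate is the $H$-quadratic remainder. Here I would substitute $H=S+T$ (see \eqref{def:S:T}) and invoke Proposition \ref{prop:second_variation_identity}: the quadratic density $\tfrac14\U^2\pa_lH_{ik}\pa_lH_{ik}-\tfrac{2(n-1)}{n-2}\pa_k\U\pa_l\U H_{ik}H_{il}-\cdots$ equals, up to a divergence $\mathrm{div}\,\xi$, the expression $\tfrac14 Q_{ik,j}Q_{ik,j}-\tfrac12 Q_{ki,k}Q_{li,l}+2\U^{2n/(n-2)}T_{ik}T_{ik}$ together with the $\w$-terms that precisely cancel the mixed terms produced in the previous paragraph. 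The divergence term $\mathrm{div}\,\xi$ is integrated: its interior contribution becomes a boundary integral over $\partial^+B_\rho^+$, estimated by $O\big(\sum|\pa^\alpha h_{ab}|\,\e^{n-2}\rho^{|\alpha|+2-n}+\e^{n-2}\rho^{2d+4-n}\big)$ using \eqref{est:V}, \eqref{est:w}, \eqref{eq:bdry_bubble}; its boundary-of-half-space contribution is $\int_{D_\rho}\xi_n\,d\sigma$, and by \eqref{rel:bdr_deri_xi} this $\xi_n=-\tfrac{n+2}{2(n-2)}\U\pa_n\U S_{nn}^2+\tfrac{4n(n-1)}{(n-2)^2}\U^{-1}\pa_n\U\w^2$ merges cleanly into the $D_\rho$-integral already present. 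Finally the term $2\U^{2n/(n-2)}T_{ik}T_{ik}\ge 0$ is nonnegative and simply dropped, and the remaining good term $\tfrac14\int_{B_\rho^+}Q_{ik,j}Q_{ik,j}-\tfrac{n^2}{2(n-1)(n-2)}\int_{D_\rho}\pa_n\U\,\U S_{nn}^2$ is bounded below by Proposition \ref{prop:Q_and_h}, producing the negative gain $-\tfrac12\lambda^*\sum|\pa^\alpha h_{ab}|^2\e^{n-2}\int_{B_\rho^+}(\e+|y|)^{2|\alpha|+2-2n}\,dy$ (with a factor $\tfrac12$ to absorb lower-order slack).

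Two further ingredients complete the bookkeeping. The scalar curvature term $\int_{B_\rho^+}R_{g_{x_0}}(\U+\w)^2\,dy$ is expanded using the second (finer) estimate in Proposition \ref{prop:expan_scalar}: the piece $\pa_i\pa_k h_{ik}$ integrates by parts against $\U^2$ to give $-\int\pa_j\U^2 h_{ij}\tfrac{y^i}{|y|}$ on $\partial^+B_\rho^+$ plus $\int\U^2\pa_j h_{ij}\tfrac{y^i}{|y|}$ — matching the stated flux terms — while the quadratic-in-$H$ part of $R_{g_{x_0}}$ combines with the Dirichlet quadratic remainder inside Proposition \ref{prop:second_variation_identity} (that proposition was designed so that $-2\U\w\pa_i\pa_k H_{ik}$ and the $\pa_k(H_{ik}\pa_lH_{il})$-type terms are exactly the ones appearing), and the residual is absorbed into the $O(\sum|\pa^\alpha h_{ab}|\e^{n-2}\rho^{|\alpha|+2-n}+\e^{n-2}\rho^{2d+4-n})$ error by crude size estimates on $(\e+|y|)$-weighted integrals. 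Similarly the mean curvature term: $h_{g_{x_0}}=O(|y|^{2d+1})$ by \eqref{mean_curv_Fermi}, so $2(n-1)\int_{D_\rho}h_{g_{x_0}}(\U+\w)^2$ is itself of the claimed error size; the displayed $-4(n-1)T_c\int_{D_\rho}\U^{2/(n-2)}(\cdots)$ term comes not from $h_{g_{x_0}}$ but from the boundary integration by parts of the Euclidean Dirichlet energy, using $\pa_n\U=-(n-2)T_c\U^{n/(n-2)}$ on $\mathbb{R}^{n-1}$ from \eqref{prob:half-space}. The main obstacle is the algebraic matching in the quadratic terms: one must verify that every mixed $\U\w$, $H\pa H$, and $\w^2$ term generated by the linearized-equation integrations by parts, by the scalar-curvature expansion, and by $\mathrm{div}\,\xi$ cancels against its counterpart, leaving only the manifestly good and manifestly small pieces — this is precisely where Propositions \ref{prop:linearized_eqs_smc}, \ref{prop:second_variation_identity} (and hence Lemma \ref{lem:phi}) are engineered to fit together, and the computation, though lengthy, is forced once those identities are in hand.
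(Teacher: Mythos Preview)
Your proposal is correct and follows essentially the same route as the paper: expand $g_{x_0}^{ij}$ in powers of $H$, split the integrand into a main Euclidean piece plus $H$-linear and $H$-quadratic remainders (the paper names these $J_1,\dots,J_4$), feed the quadratic part into the algebraic identity of Proposition \ref{prop:second_variation_identity}, harvest the negative $\lambda^*$-gain from Proposition \ref{prop:Q_and_h}, compute $\int_{D_\rho}\xi_n$ via \eqref{rel:bdr_deri_xi}, and convert the remaining $\int_{B_\rho^+}|\nabla\U|^2$ using \eqref{prob:half-space}. One small inaccuracy: you describe handling the $\w$-terms by ``multiplying the bulk linearized equation by $\w$'', but in fact the paper never does this separately---the $|\nabla\w|^2$ and $n(n+2)\U^{4/(n-2)}\w^2$ contributions are placed directly into $J_2$ and are absorbed by Proposition \ref{prop:second_variation_identity}, which was engineered precisely so those terms appear on its right-hand side; Lemma \ref{lem:phi} enters only through the derivation of that identity and through \eqref{rel:bdr_deri_xi}. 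Also, your parenthetical ``with a factor $\tfrac12$ to absorb lower-order slack'' is exactly right: the full $\lambda^*$ from Proposition \ref{prop:Q_and_h} is partially spent (via Young's inequality) to control the $J_3+J_4$ residuals coming from the higher-order metric expansion \eqref{est:expan_inverse_g_x_0} and the scalar-curvature remainder in Proposition \ref{prop:expan_scalar}.
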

\begin{proof}
Notice that $\ubar=\U+\w$ in $B_\rho^+$. First it follows from \eqref{mean_curv_Fermi} and \eqref{eq:def:phi} that
\begin{equation}\label{eq:mean_curv_hot}
\int_{D_\rho}h_{g_{x_0}}(\U+\w)^2 d\sigma\leq C \int_{D_\rho}|y|^{2d+1}(\U+\w)^2 d\sigma\leq C \epsilon^{n-2}\rho^{2d+4-n}.
\end{equation}
Next we decompose 
\begin{align}\label{eq:decom_energy_integrand}
&\frac{4(n-1)}{n-2}|\nabla(\U+\w)|^2_{g_{x_0}}+R_{g_{x_0}}(\U+\w)^2\no\\
=&\frac{4(n-1)}{n-2}|\nabla \U|^2+\frac{4(n-1)}{n-2}n(n+2)\U^{\frac{4}{n-2}}\w^2+\sum_{i=1}^4 J_i,
\end{align}
where
\begin{align*}
J_1=&\frac{8(n-1)}{n-2}\pa_i\U\pa_i\w-\frac{4(n-1)}{n-2}\pa_i\U\pa_k\U h_{ik}+\U^2\pa_i\pa_k h_{ik},\\
&\quad-\U^2\pa_k(H_{ik}\pa_lH_{il})-2\U\pa_k\U H_{ik}\pa_lH_{il},\\
J_2=&-\frac{1}{4}\U^2\partial_lH_{ik}\partial_lH_{ik}+\frac{2(n-1)}{n-2}  \partial_k\U\partial_l\U H_{ik}H_{il}+2\U\partial_k\U H_{ik}\partial_lH_{il}\\
&\quad+\frac{1}{2}\U^2\partial_kH_{ik}\partial_lH_{il}+2\U\w\pa_i\pa_k H_{ik}-\frac{8(n-1)}{n-2}\pa_i\U\pa_k\w H_{ik}\\
&\quad+\frac{4(n-1)}{n-2}|\nabla \w|^2-\frac{4(n-1)}{n-2}n(n+2)\U^{\frac{4}{n-2}}\w^2,\\
J_3=&\frac{4(n-1)}{n-2}(g^{ik}_{x_0}-\delta_{ik}+h_{ik}-\frac{1}{2}H_{il}H_{kl})\pa_i\U\pa_k\U\\
&\quad+\Big[R_{g_{x_0}}-\pa_i\pa_k h_{ik}+\pa_k(H_{ik}\pa_lH_{il})-\frac{1}{2}\partial_kH_{ik}\partial_lH_{il}+\frac{1}{4}\pa_lH_{ik}\pa_lH_{ik}\Big]\U^2,\\
J_4=&\frac{8(n-1)}{n-2}(g^{ik}_{x_0}-\delta_{ik}+H_{ik})\pa_i\U\pa_k\w+2(R_{g_{x_0}}-\pa_i\pa_kH_{ik})\U\w\\
&\quad +R_{g_{x_0}}\w^2+\frac{4(n-1)}{n-2}(g^{ik}_{x_0}-\delta_{ik})\pa_i\w\pa_k\w.
\end{align*}

We start with $J_1$. Rearrange $J_1$ as
\begin{align*}
J_1=&\frac{8(n-1)}{n-2}\pa_i(\pa_i\U\w)-\frac{8(n-1)}{n-2}\w\Delta\U+\pa_i(\U^2\pa_kh_{ik})-\pa_k(\pa_i\U^2h_{ik})\\
&+2\left(\U\pa_i\pa_k\U-\frac{n}{n-2}\pa_i\U\pa_k\U\right)h_{ik}-\pa_k(\U^2H_{ik}\pa_lH_{il}).
\end{align*}
Notice that $\U$ satisfies
\begin{align*}
\w\Delta \U =&-\frac{(n-2)^2}{2} \pa_i(\U^{\frac{2n}{n-2}}V_i).
\end{align*}
Thus using $V_n=0$ on $D_\rho$, $H_{in}=h_{in}=0, {\rm tr}\,h=O(|y|^{2d+2})$ in $B_\rho^+$ and \eqref{eq:bubble_Einstein}, we have
\begin{align*}
&\int_{B_\rho^+}J_1dy\\
=&-\frac{8(n-1)}{n-2}\int_{D_\rho}\pa_n\U \w d\sigma+\frac{8(n-1)}{n-2}\int_{\pa^+B_\rho^+}\w\pa_i\U\frac{y^i}{|y|}d\sigma\\
&+4(n-1)(n-2)\int_{\pa^+B_\rho^+}\U^{\frac{2n}{n-2}}V_i\frac{y^i}{|y|}d\sigma+\int_{\pa^+B_\rho^+}(\U^2\pa_k h_{ik}-\pa_k\U^2 h_{ik})\frac{y^i}{|y|}d\sigma\\
&+\int_{B_\rho^+}\frac{2}{n}(\U\Delta\U-\frac{n}{n-2}|\nabla \U|^2) {\rm tr}h\,dy-\int_{\pa^+ B_\rho^+}\U^2H_{ik}\pa_lH_{il}\frac{y^k}{|y|}d\sigma.
\end{align*}
Using \eqref{est:V} and the expression \eqref{eq:bdry_bubble} of $\U$, we estimate
\begin{align*}
\int_{\pa^+B_\rho^+}\pa_i\U\w\frac{y^i}{|y|}d\sigma\leq C(n,T_c) \sum_{a,b=1}^{n-1}\sum_{|\alpha|=1}^d|\pa^\alpha h_{ab}|\epsilon^{n-2}\rho^{|\alpha|+2-n},&\\
\int_{\pa^+B_\rho^+}\U^{\frac{2n}{n-2}}V_i\frac{y^i}{|y|}d\sigma\leq C(n,T_c)\sum_{a,b=1}^{n-1}\sum_{|\alpha|=1}^d|\pa^\alpha h_{ab}|\epsilon^n\rho^{|\alpha|-n},&\\
\int_{B_\rho^+}(\U\Delta\U-\frac{n}{n-2}|\nabla \U|^2) {\rm tr}h\,dy\leq C(n,T_c)\epsilon^{n-2}\rho^{2d+4-n}&
\end{align*}
\iffalse
\textcolor{blue}{Indeed we have
$$\int_{B_\rho^+}\U\Delta\U {\rm tr} h dx=\begin{cases}O(\epsilon^n \log\frac{\rho}{\epsilon}) &\hbox{~~if~~} n \hbox{~~is even,}\\
O(\epsilon^{n-1})  &\hbox{~~if~~} n \hbox{~~is odd.}
\end{cases}$$}
\fi
and use $|\pa H_{ij}|\leq C$ to show
\begin{align*}
\int_{\partial^+B_\rho^+}\U^2H_{ik}\pa_lH_{il}\frac{y^k}{|y|}d\sigma\leq C(n,T_c) \sum_{a,b=1}^{n-1}\sum_{|\alpha|=1}^d |\pa^\alpha h_{ab}|\epsilon^{n-2}\rho^{|\alpha|+3-n}.
\end{align*}
Hence combining the above estimates together, we obtain
\begin{align}
\int_{B_\rho^+}J_1dy\leq&-\frac{8(n-1)}{n-2}\int_{D_\rho}\pa_n\U \w d\sigma+\int_{\pa^+B_\rho^+}(\U^2\pa_k h_{ik}-\pa_k\U^2 h_{ik})\frac{y^i}{|y|}d\sigma\nonumber\\
&+C\sum_{a,b=1}^{n-1}\sum_{|\alpha|=1}^d|\pa^\alpha h_{ab}|\epsilon^{n-2}\rho^{|\alpha|+2-n}+C\rho^{2d+4-n}\epsilon^{n-2}.\label{est:J_1}
\end{align}

For $J_2$, by Proposition \ref{prop:second_variation_identity} and \eqref{eq:U:T:1} we have
$$J_2=-\frac{1}{4}Q_{ik,l}Q_{ik,l}-2\U^{\frac{2n}{n-2}}T_{ik}T_{ik}+\mathrm{div}\,\xi.$$
By \eqref{eq:xi} a direct computation yields
\begin{align*}
\int_{\pa^+B_\rho^+}\xi_i\frac{y^i}{|y|}d\sigma\leq C\sum_{a,b=1}^{n-1}\sum_{|\alpha|=1}^d|\pa^\alpha h_{ab}|^2\rho^{2|\alpha|+2-n}\epsilon^{n-2}.
\end{align*}
From this and Proposition \ref{prop:Q_and_h} we estimate
\begin{align}\label{est:J_2}
&\int_{B_\rho^+}J_2dy\no\\
=&-\frac{1}{4}\int_{B_\rho^+}Q_{ik,l}Q_{ik,l}dy-2\int_{B_\rho^+}\U^{\frac{2n}{n-2}}T_{ik}T_{ik} dy+\int_{\pa^+B_\rho^+}\xi_i\frac{y^i}{|y|}d\sigma-\int_{D_{\rho}}\xi_nd\sigma\no\\
\leq&-\int_{D_\rho}\xi_nd\sigma-\frac{n^2}{2(n-1)(n-2)}\int_{D_\rho}\partial_n \U\U S_{nn}^2d\sigma\no\\
&-\frac{1}{4}\lambda^\ast \sum_{a,b=1}^{n-1}\sum_{|\alpha|=1}^d |\pa^\alpha h_{ab}|^2\epsilon^{n-2}\int_{B_\rho^+}(\epsilon+|y|)^{2|\alpha|+2-2n}dy\no\\
&+C\sum_{a,b=1}^{n-1}\sum_{|\alpha|=1}^d |\pa^\alpha h_{ab}|^2\rho^{2|\alpha|+2-n}\epsilon^{n-2}.
\end{align}

Observe that when $|y|$ is sufficiently small, there hold $|h|\leq C|y|$ and 
\begin{align}\label{est:expan_inverse_g_x_0}
|g_{x_0}^{ik}-\delta_{ik}|\leq& C|h|,\no\\
|g_{x_0}^{ik}-\delta_{ik}+H_{ik}|\leq& C|h|^2+O(|y|^{d+1})\leq C|h||y|+O(|y|^{d+1}),\no\\
|g_{x_0}^{ik}-\delta_{ik}+h_{ik}-\frac{1}{2}H_{il}H_{kl}|\leq& C|h|^3+O(|y|^{d+2})\leq C|h|^2|y|+O(|y|^{d+2}).
\end{align}
By Proposition \ref{prop:expan_scalar}, \eqref{est:expan_inverse_g_x_0} and Young's inequality, we can bound $J_3$ and $J_4$ by
\begin{align}\label{est:J_4}
&J_3+J_4\no\\
\leq& C(n,T_c)\epsilon^{n-2}\sum_{a,b=1}^{n-1}\sum_{|\alpha|=1}^d |\pa^\alpha h_{ab}|^2(\epsilon+|y|)^{2|\alpha|+3-2n}\no\\
&+C(n,T_c)\epsilon^{n-2}\sum_{a,b=1}^{n-1}\sum_{|\alpha|=1}^d |\pa^\alpha h_{ab}|(\epsilon+|y|)^{|\alpha|+d+3-2n}\no\\
&+C(n,T_c)\epsilon^{n-2}(\epsilon+|y|)^{2d+4-2n}\no\\
\leq& \frac{1}{2}\lambda^*\epsilon^{n-2}\sum_{a,b=1}^{n-1}\sum_{|\alpha|=1}^d |\pa^\alpha h_{ab}|^2(\epsilon+|y|)^{2|\alpha|+2-2n}+C\epsilon^{n-2}(\epsilon+|y|)^{2d+4-2n}.
\end{align}

Consequently, combining the above \eqref{eq:mean_curv_hot}, \eqref{est:J_1}-\eqref{est:J_4} and using the decomposition \eqref{eq:decom_energy_integrand},  we conclude that
\begin{align}\label{est:energy_inside_intermediate}
&\int_{B^+_\rho}\left[\tfrac{4(n-1)}{n-2}|\nabla(\U+\w)|_{g_{x_0}}^2+R_{g_{x_0}}(\U+\w)^2\right] dy+2(n-1)\int_{D_\rho}h_{g_{x_0}}(\U+\w)^2d\sigma\no\\
\leq &\frac{4(n-1)}{n-2}\int_{B_\rho^+}\left[|\nabla \U|^2+n(n+2)\U^{\frac{4}{n-2}}\w^2\right]dy\no\\
&+\int_{\pa^+B_\rho^+}(\U^2\pa_j h_{ij}-\pa_j\U^2 h_{ij})\frac{y^i}{|y|}d\sigma-\int_{D_\rho}\frac{8(n-1)}{n-2}\pa_n\U \w d\sigma\no\\
&-\int_{D_\rho}\xi_nd\sigma-\frac{n^2}{2(n-1)(n-2)}\int_{D_\rho}\partial_n \U\U S_{nn}^2d\sigma\no\\
&-\frac{1}{2}\lambda^* \sum_{a,b=1}^{n-1}\sum_{|\alpha|=1}^d |\pa^\alpha h_{ab}|^2\epsilon^{n-2}\int_{B_\rho^+}(\epsilon+|y|)^{2|\alpha|+2-2n}dy\no\\
&+C\sum_{a,b=1}^{n-1}\sum_{|\alpha|=1}^d |\pa^\alpha h_{ab}|\epsilon^{n-2} \rho^{|\alpha|+2-n}+C\epsilon^{n-2}\rho^{2d+4-n}.
\end{align}
Testing equation \eqref{prob:half-space} with $\U$ and integrating over $B_\rho^+$, via integration by parts we obtain
\begin{align*}
&\frac{4(n-1)}{n-2}\int_{B_\rho^+}\left[|\nabla \U|^2+n(n+2)\U^{\frac{4}{n-2}}\w^2\right]dy\\
=&4n(n-1)\int_{B_\rho^+}\U^{\frac{4}{n-2}}\left(\U^2+\frac{n+2}{n-2}\w^2\right)dy\\
&+\frac{4(n-1)}{n-2}\int_{\pa^+B_\rho^+}\U\pa_i \U \frac{y^i}{|y|} d\sigma-4(n-1)T_c\int_{D_\rho}\U^{\frac{2(n-1)}{n-2}}d\sigma.
\end{align*}
Therefore, plugging this and \eqref{rel:bdr_deri_xi} into \eqref{est:energy_inside_intermediate} as well as using \eqref{prob:half-space} again, we obtain the desired assertion.
\end{proof}

\begin{proposition}\label{prop:volume_a} There exists some sufficiently small $\rho_0$ such that
\begin{align*}
&4n(n-1)\int_{B^+_\rho}\U^{\frac{4}{n-2}}\Big(\U^2+\frac{n+2}{n-2}\w^2\Big)dy\\
\leq&a Y_{a,b}(\mathbb{R}^n_+,\mathbb{R}^{n-1})\left(\int_{B_\rho^+}(\U+\w)^{\frac{2n}{n-2}}dy\right)^{\frac{n-2}{n}}+C\epsilon^n\sum_{a,b=1}^{n-1}\sum_{|\alpha|=1}^d |\pa^\alpha h_{ab}|\rho^{|\alpha|-n}\\
&+C\epsilon^n\sum_{a,b=1}^{n-1}\sum_{|\alpha|=1}^d |\pa^\alpha h_{ab}|^2\int_{B_\rho^+}(\epsilon+|y|)^{2|\alpha|+1-2n}dy
\end{align*}
for all $0<2\epsilon\leq \rho\leq \rho_0$.
\end{proposition}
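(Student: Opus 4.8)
The plan is to reduce the estimate to a one–variable inequality for the concave power $s\mapsto s^{(n-2)/n}$, after a Taylor expansion of $(\U+\w)^{\frac{2n}{n-2}}$ in $\w$ and one integration by parts. First recall from \eqref{conf_invar_halfspace} in Lemma~\ref{lem:Y_{a,b}halfspace} that $aY_{a,b}(\mathbb{R}^n_+,\mathbb{R}^{n-1})=4n(n-1)A^{2/n}$, where $A=\int_{\mathbb{R}^n_+}W^{\frac{2n}{n-2}}dy=\int_{\mathbb{R}^n_+}\U^{\frac{2n}{n-2}}dy$ by dilation invariance. In particular, setting $X_0:=\int_{B_\rho^+}\U^{\frac{2n}{n-2}}dy$ and $Y_0:=\int_{B_\rho^+}\U^{\frac{4}{n-2}}\w^2\,dy\ge0$, one has the trivial but decisive bound $X_0\le A$, and the left-hand side of the Proposition is exactly $4n(n-1)\big(X_0+\tfrac{n+2}{n-2}Y_0\big)$. (When $n=3$, $d=0$, so $\w\equiv0$, $Y_0=0$, and the assertion is just $X_0\le A^{2/n}X_0^{(n-2)/n}$, i.e. $X_0\le A$; so assume $n\ge4$ below.)

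Next I expand. On $B_\rho^+$, the pointwise estimate \eqref{est:w} together with the lower bound $\U(y)\ge C^{-1}\epsilon^{\frac{n-2}{2}}(\epsilon+|y|)^{2-n}$ gives $|\w/\U|\le C(\epsilon+|y|)\sum_{a,b}\sum_{|\alpha|=1}^d|\pa^\alpha h_{ab}|\le C\rho_0$, so for $\rho_0$ small the Taylor expansion
$$(\U+\w)^{\frac{2n}{n-2}}=\U^{\frac{2n}{n-2}}+\tfrac{2n}{n-2}\U^{\frac{n+2}{n-2}}\w+\tfrac{n(n+2)}{(n-2)^2}\U^{\frac{4}{n-2}}\w^2+R$$
holds with $|R|\le C\,\U^{\frac{4}{n-2}}\w^2\,(|\w|/\U)$. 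Inserting $|\w/\U|\le C(\epsilon+|y|)\sum|\pa^\alpha h_{ab}|$ and $\U^{\frac{4}{n-2}}\w^2\le C\epsilon^n\sum|\pa^\alpha h_{ab}|^2(\epsilon+|y|)^{2|\alpha|-2n}$ and integrating shows $\int_{B_\rho^+}|R|\,dy\le C\epsilon^n\sum_{a,b}\sum_{|\alpha|=1}^d|\pa^\alpha h_{ab}|^2\int_{B_\rho^+}(\epsilon+|y|)^{2|\alpha|+1-2n}dy$, i.e. it is absorbed by the last error term.

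For the linear term I use $\U^{\frac{n+2}{n-2}}\pa_i\U=\tfrac{n-2}{2n}\pa_i(\U^{\frac{2n}{n-2}})$ and the definition $\w=\pa_i\U\,V_i+\tfrac{n-2}{2n}\U\,\mathrm{div}V$ from \eqref{eq:def:phi}; integrating by parts over $B_\rho^+$ the two interior divergence contributions cancel, and the $D_\rho$ part of the boundary integral vanishes because $V_n=0$ on $\mathbb{R}^{n-1}$ by \eqref{eq:V}, leaving
$$\int_{B_\rho^+}\U^{\frac{n+2}{n-2}}\w\,dy=\tfrac{n-2}{2n}\int_{\pa^+B_\rho^+}\U^{\frac{2n}{n-2}}V_i\tfrac{y^i}{|y|}\,d\sigma.$$
On $\pa^+B_\rho^+=\{|y|=\rho\}$, the bounds \eqref{est:V} and $\U\le C\epsilon^{\frac{n-2}{2}}(\epsilon+|y|)^{2-n}$ give $\big|\int_{B_\rho^+}\U^{\frac{n+2}{n-2}}\w\,dy\big|\le C\epsilon^n\sum_{a,b}\sum_{|\alpha|=1}^d|\pa^\alpha h_{ab}|\rho^{|\alpha|-n}$, the first error term. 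Combining the last three displays, $\int_{B_\rho^+}(\U+\w)^{\frac{2n}{n-2}}dy=X_0+\tfrac{n(n+2)}{(n-2)^2}Y_0+\eta$ with $|\eta|$ dominated by the two error terms in the statement.

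Finally, set $\phi(t)=A^{2/n}\big(X_0+\tfrac{n(n+2)}{(n-2)^2}t\big)^{\frac{n-2}{n}}$ for $t\in[0,Y_0]$. A second-order Taylor expansion at $t=0$, using $X_0\le A$, gives $\phi(0)=A^{2/n}X_0^{\frac{n-2}{n}}=(A/X_0)^{2/n}X_0\ge X_0$ and $\phi'(0)=\tfrac{n+2}{n-2}(A/X_0)^{2/n}\ge\tfrac{n+2}{n-2}$, while $|\phi''|\le C(n,T_c)$ on $[0,Y_0]$ once $\rho_0$ is small enough to force $X_0\ge A/2$; hence $\phi(Y_0)\ge X_0+\tfrac{n+2}{n-2}Y_0-CY_0^2$. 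Since $Y_0\le C\epsilon^n\sum|\pa^\alpha h_{ab}|^2\int_{B_\rho^+}(\epsilon+|y|)^{2|\alpha|-2n}dy$ and $2|\alpha|\le n-2$ for $|\alpha|\le d$, one checks (using $\epsilon^{2|\alpha|-1}\le1$) that $Y_0^2\le C\epsilon^n\sum|\pa^\alpha h_{ab}|^2\int_{B_\rho^+}(\epsilon+|y|)^{2|\alpha|+1-2n}dy$, again of the allowed form. Then, using monotonicity of $s\mapsto s^{\frac{n-2}{n}}$ (and the mean value theorem, with derivative bounded since the argument is $\ge A/2$) to reinsert $\eta$,
$$4n(n-1)\Big(X_0+\tfrac{n+2}{n-2}Y_0\Big)\le4n(n-1)\phi(Y_0)+CY_0^2\le4n(n-1)A^{2/n}\Big(\int_{B_\rho^+}(\U+\w)^{\frac{2n}{n-2}}dy\Big)^{\frac{n-2}{n}}+C\cdot(\text{error terms}),$$
which is the claim. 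The only genuinely delicate point — and the one step deserving care — is the bookkeeping that pins $R$ and $Y_0^2$ to the two prescribed error expressions; the conceptual content is merely that $X_0\le A$ makes the zeroth- and first-order coefficients of $\phi$ land on the right side, so no cancellation beyond $O(Y_0^2)$ needs to be tracked.
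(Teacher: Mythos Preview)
Your argument is correct and follows exactly the route the paper points to (Taylor expansion of $(\U+\w)^{2n/(n-2)}$, integration by parts on the linear term via $V_n=0$ on $D_\rho$, and concavity of $s\mapsto s^{(n-2)/n}$ together with $X_0\le A$), which is the content of Brendle's Propositions~14--15 that the paper cites. One small slip: making $\rho_0$ small does \emph{not} force $X_0\ge A/2$ (indeed $X_0$ is nonincreasing in $\rho$); the uniform positive lower bound you need, $X_0\ge\int_{B_2^+}W^{2n/(n-2)}dz=:c_0(n,T_c)>0$, comes instead from the hypothesis $\rho\ge 2\epsilon$ after the rescaling $y=\epsilon z$, and this suffices for $|\phi''|\le C(n,T_c)$ and for the mean-value step with $\eta$.
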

\begin{proof} 
Notice that \eqref{est:w} gives $|\w|\leq C (\epsilon+|y|)\U$ in $B_{2\rho}^+$. By Lemma \ref{lem:Y_{a,b}halfspace}, we get
$$aY_{a,b}(\mathbb{R}_+^n,\mathbb{R}^{n-1})=4n(n-1)\left(\int_{\mathbb{R}_+^n}\U^{\frac{2n}{n-2}}dx\right)^{\frac{2}{n}}.$$
Together with the fact that $V_n=0$ on $D_{2\rho}$, the desired estimate can follow the same lines in \cite[Propositions 14-15]{Brendle2}.
\end{proof}
\begin{proposition}\label{prop:volume_b}
There exists some sufficiently small $\rho_0$ such that
\begin{align*}
&-4(n-1)T_c\int_{D_\rho}\U^{\frac{2}{n-2}}\Big(\U^2+2\U\w+\frac{n}{n-2}\w^2-\frac{n-2}{8(n-1)^2}\U^2S_{nn}^2\Big)d\sigma\\
\leq &2(n-1)bY_{a,b}(\mathbb{R}^n_+,\mathbb{R}^{n-1})\left(\int_{D_\rho}(\U+\w)^{\frac{2(n-1)}{n-2}}d\sigma\right)^{\frac{n-2}{n-1}}\\
&+C\sum_{a,b=1}^{n-1}\sum_{|\alpha|=1}^d |\pa^\alpha h_{ab}|\rho^{|\alpha|+1-n}\epsilon^{n-1}\\
&+C\sum_{a,b=1}^{n-1}\sum_{|\alpha|=1}^d |\pa^\alpha h_{ab}|^2\epsilon^{n-1}\rho\int_{D_\rho}(\epsilon+|y|)^{2|\alpha|+2-2n}d\sigma
\end{align*}
for all $0<2\epsilon\leq \rho< \rho_0$.
\end{proposition}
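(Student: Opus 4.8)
The plan is to follow the argument behind Proposition~\ref{prop:volume_a}, i.e. \cite[Propositions 14--15]{Brendle2}, but now on the boundary piece $D_\rho$, with the correction term $-\frac{n-2}{8(n-1)^2}\U^2S_{nn}^2$ supplying precisely the slack needed to close a Cauchy--Schwarz inequality. Write $p=\critbordo$, so $p-1=\frac n{n-2}$, $p-2=\frac2{n-2}$ and $\frac{n-2}{n-1}=\frac2p$. First I would Taylor-expand $(\U+\w)^p=\U^p+p\U^{p-1}\w+\tfrac{p(p-1)}2\U^{p-2}\w^2+R_1$ with $|R_1|\le C\U^{p-3}|\w|^3$ on $D_\rho$; using \eqref{est:w} (which gives $|\w|\le C(\e+|y|)\U$) and the two-sided bound $C^{-1}\e^{\frac{n-2}2}(\e+|y|)^{2-n}\le\U\le C\e^{\frac{n-2}2}(\e+|y|)^{2-n}$, together with $(\e+|y|)^{3|\alpha|}\le\rho^{|\alpha|}(\e+|y|)^{2|\alpha|}$ and boundedness of $|\pa^\alpha h_{ab}|$, one finds $\int_{D_\rho}|R_1|\,d\sigma$ is dominated by the second stated error term.

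Next, by Lemma~\ref{lem:Y_{a,b}halfspace} one has $2(n-1)bY_{a,b}(\Rn,\mathbb{R}^{n-1})=-4(n-1)T_cB^{1/(n-1)}$ with $B=\int_{\mathbb{R}^{n-1}}\U^p\,d\sigma$ and $B-\int_{D_\rho}\U^p\,d\sigma=O(\e^{n-1}\rho^{1-n})$. Dividing the Proposition through by $-4(n-1)T_c>0$, setting $N=\int_{D_\rho}(\U+\w)^p\,d\sigma$, and expanding $t\mapsto t^{(n-2)/(n-1)}$ about $B$ (with the previous step), one obtains
$$B^{\frac1{n-1}}N^{\frac{n-2}{n-1}}=\int_{D_\rho}\U^p\,d\sigma+\tfrac1{n-1}\Big(B-\!\int_{D_\rho}\!\U^p\Big)+2\!\int_{D_\rho}\!\U^{p-1}\w+\tfrac n{n-2}\!\int_{D_\rho}\!\U^{p-2}\w^2-\tfrac2{n-2}\frac{\big(\int_{D_\rho}\U^{p-1}\w\big)^2}{\int_{D_\rho}\U^p}+(\text{h.o.t.}),$$
where the coefficients $\frac{n-2}{n-1}p=2$, $\frac{n-2}{n-1}\cdot\frac{p(p-1)}2=\frac n{n-2}$, $\frac{n-2}{2(n-1)^2}p^2=\frac2{n-2}$ match those on the left-hand side of the Proposition term by term (note $\int_{D_\rho}\U^{p-1}\w=\int_{D_\rho}\U^{n/(n-2)}\w$ and $\int_{D_\rho}\U^{p-2}\w^2=\int_{D_\rho}\U^{2/(n-2)}\w^2$). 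Since the tail term $\tfrac1{n-1}(B-\int_{D_\rho}\U^p)\gtrsim\e^{n-1}\rho^{1-n}\ge0$ is a helpful surplus and the h.o.t. are absorbed into the stated errors, the Proposition reduces to
$$\frac2{n-2}\,\frac{\big(\int_{D_\rho}\U^{n/(n-2)}\w\,d\sigma\big)^2}{\int_{D_\rho}\U^p\,d\sigma}\ \le\ \frac{n-2}{8(n-1)^2}\int_{D_\rho}\U^pS_{nn}^2\,d\sigma+C\e^{n-1}\rho^{1-n}+(\text{stated errors}).$$

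The heart of the matter is the identity
$$\int_{D_\rho}\U^{n/(n-2)}\w\,d\sigma=\frac{n-2}{4(n-1)}\int_{D_\rho}\U^pS_{nn}\,d\sigma+\delta,\qquad|\delta|\le C\e^{n-2}\sum_{a,b=1}^{n-1}\sum_{|\alpha|=1}^d|\pa^\alpha h_{ab}|\,\rho^{|\alpha|+2-n}.$$
To prove it, note that $\pa_n\U=(n-2)T_c\U^{n/(n-2)}$ on $\mathbb{R}^{n-1}$ by \eqref{prob:half-space}, so the left side is $\frac1{(n-2)T_c}\int_{D_\rho}(\pa_n\U)\w\,d\sigma$; applying Green's identity to $\U,\w$ on $B_\rho^+$ with $\Delta\U=-n(n-2)\U^{(n+2)/(n-2)}$ and the interior equation of Lemma~\ref{lem:phi}, $\int_{D_\rho}(\w\pa_n\U-\U\pa_n\w)\,d\sigma$ becomes $\int_{B_\rho^+}\big[-4n\U^{(n+2)/(n-2)}\w+\frac{n-2}{4(n-1)}\U^2\pa_i\pa_jS_{ij}+\U\pa_i(\pa_j\U\,S_{ij})\big]\,dy$ plus an integral over $\pa^+B_\rho^+$. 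Integrating the $B_\rho^+$-integral by parts with \eqref{eq:bubble_Einstein}, \eqref{eq:U:T:1}, $H_{in}=0$ and Lemma~\ref{lem:S_T}, all bulk contributions cancel identically and only $\pa^+B_\rho^+$-terms remain (for the first summand this uses $\int_{B_\rho^+}\U^{(n+2)/(n-2)}\w=\frac{n-2}{2n}\int_{\pa^+B_\rho^+}\U^{2n/(n-2)}V_i\frac{y^i}{|y|}\,d\sigma$). Inserting the boundary equation $\U\pa_n\w=\frac n{n-2}(\pa_n\U)\w-\frac1{2(n-1)}\U\pa_n\U\,S_{nn}$ from Lemma~\ref{lem:phi} and $\U\pa_n\U=(n-2)T_c\U^p$ yields the identity, while the $\delta$-bound follows from \eqref{est:V}, \eqref{est:w}, \eqref{eq:bdry_bubble} on $\pa^+B_\rho^+$. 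Squaring the identity and applying Cauchy--Schwarz $\big(\int_{D_\rho}\U^pS_{nn}\big)^2\le\int_{D_\rho}\U^p\cdot\int_{D_\rho}\U^pS_{nn}^2$ produces exactly the term $\frac{n-2}{8(n-1)^2}\int_{D_\rho}\U^pS_{nn}^2$; the cross term $\tfrac1{n-1}\delta\big(\int_{D_\rho}\U^pS_{nn}\big)/\int_{D_\rho}\U^p$ is handled via $|\int_{D_\rho}\U^pS_{nn}|\le(\int_{D_\rho}\U^p)^{1/2}(\int_{D_\rho}\U^pS_{nn}^2)^{1/2}$ followed by a Young inequality with weight comparable to $\rho$ (so that the resulting $\rho\int_{D_\rho}\U^pS_{nn}^2$ sits inside the second error term), and the residuals $\sim\delta^2$, $\sim\rho^{-1}\delta^2$ and the tail $\e^{n-1}\rho^{1-n}$ are absorbed into the two stated error terms using $2\e<\rho<\rho_0\le1$. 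For $n=3$ one has $\w\equiv0$, $S\equiv0$, and the inequality is immediate from $B\ge\int_{D_\rho}\U^p$.

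\textbf{The main obstacle} is the cascade of integrations by parts producing the displayed identity: every bulk integral over $B_\rho^+$ must cancel (it must, so that the constant $\frac{n-2}{4(n-1)}$ in front of $\int_{D_\rho}\U^pS_{nn}$ is exactly right and the ensuing Cauchy--Schwarz is sharp), and the surviving $\pa^+B_\rho^+$-remainders, once squared and divided by $\int_{D_\rho}\U^p$, must land precisely inside the two error terms allowed by the statement — a delicate cancellation/bookkeeping that is the boundary counterpart of \cite[Propositions 14--15]{Brendle2} and \cite[Propositions 14--15]{ChenSophie}, now driven by the boundary structure recorded in Lemmas~\ref{lem:phi} and \ref{lem:S_T}.
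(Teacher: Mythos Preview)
Your outline is correct and is precisely the argument the paper defers to (Chen's Proposition 8, Almaraz's (3.23)): Taylor-expand $(\U+\w)^{p}$, expand $t\mapsto t^{(n-2)/(n-1)}$ about $B$, and reduce to a Cauchy--Schwarz estimate closed by the $S_{nn}^2$-term. The coefficients you computed match, and the error bookkeeping goes through.

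One remark: your proof of the key identity
\[
\int_{D_\rho}\U^{\frac{n}{n-2}}\w\,d\sigma=\frac{n-2}{4(n-1)}\int_{D_\rho}\U^{p}S_{nn}\,d\sigma+\delta
\]
via Green's formula on $B_\rho^+$ is valid (the bulk cancellations you describe do occur, thanks to \eqref{eq:bubble_Einstein} and $\mathrm{tr}\,S=0$), but it is considerably more work than necessary. A one-line proof: on $D_\rho$ write $\w=V_a\pa_a\U+\tfrac{n-2}{2n}\U\,\mathrm{div}\,V$ and integrate the first summand by parts \emph{tangentially} on $D_\rho$ (using $\U^{n/(n-2)}\pa_a\U=\tfrac{n-2}{2(n-1)}\pa_a(\U^{p})$); you get
\[
\int_{D_\rho}\U^{\frac{n}{n-2}}\w=-\frac{n-2}{2n(n-1)}\int_{D_\rho}\U^{p}\pa_aV_a+\frac{n-2}{2n}\int_{D_\rho}\U^{p}\pa_nV_n+\text{(}\pa D_\rho\text{-term)},
\]
and since $S_{nn}=\tfrac{2(n-1)}{n}\pa_nV_n-\tfrac{2}{n}\pa_aV_a$ this is exactly $\tfrac{n-2}{4(n-1)}\int_{D_\rho}\U^{p}S_{nn}$ plus a $\pa D_\rho$-integral bounded by $C\e^{n-1}\sum|\pa^\alpha h_{ab}|\rho^{|\alpha|+1-n}$, which is in fact sharper than the $\delta$-bound you state and makes the final absorption of $\rho^{-1}\delta^{2}$ and the cross term into the allowed errors immediate. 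This direct tangential integration by parts is the route taken in the references the paper cites.
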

\begin{proof}
Since  \eqref{est:w} and \eqref{est:V} give $|\w|\leq C (\epsilon+|y|)\U$ and $|S_{nn}|\leq C(\epsilon+|y|)$ in $B_{2\rho}^+$, this assertion can follow the same lines in \cite[Proposition 8]{ChenSophie} (see also \cite[(3.23)]{almaraz5}) by using
$$-2T_c\left(\int_{\mathbb{R}^{n-1}}\U^{\frac{2(n-1)}{n-2}}d\sigma\right)^{\frac{1}{n-1}}=bY_{a,b}(\mathbb{R}^n_+,\mathbb{R}^{n-1})$$
in Lemma \ref{lem:Y_{a,b}halfspace}.
\end{proof}

For simplicity, we denote by $\Omega_\rho:=\Psi_{x_0}(B_\rho^+)$ the coordinate ball of radius $\rho$ under the Fermi coordinates around $x_0$.

\begin{lemma}\label{lem:ring1}
If $0<\epsilon\ll\rho<\rho_0$ for some sufficiently small $\rho_0$, in $M\backslash \Omega_\rho$  there holds
\begin{align*}
&|\ubar-\epsilon^{\frac{n-2}{2}}G|\\
\leq& C\sum_{a,b=1}^{n-1}\sum_{|\alpha|=1}^d |\pa^\alpha h_{ab}|\rho^{|\alpha|+2-n}\epsilon^{\frac{n-2}{2}}+C\rho^{d+3-n}|\log \rho|\epsilon^{\frac{n-2}{2}}+C\rho^{1-n}\epsilon^{\frac{n}{2}}.
\end{align*}
\end{lemma}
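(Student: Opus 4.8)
The plan is to prove Lemma \ref{lem:ring1} by analyzing the difference $\ubar-\epsilon^{(n-2)/2}G$ region by region, using the explicit formula \eqref{eq:test_fcn}. Recall that in the annular region $\Omega_{2\rho}\setminus\Omega_\rho$ the cut-off $\chi_\rho$ interpolates between the two pieces, while outside $\Omega_{2\rho}$ one has simply $\ubar=\epsilon^{(n-2)/2}G$ so the difference vanishes there. Thus it suffices to work in $\Omega_{2\rho}\setminus\Omega_\rho$, equivalently for $\rho\leq|y|\leq 2\rho$ in Fermi coordinates, where
$$
\ubar-\epsilon^{\frac{n-2}{2}}G=\chi_\rho\big(\U+\w-\epsilon^{\frac{n-2}{2}}G\big).
$$
Since $0\leq\chi_\rho\leq 1$, it is enough to bound $|\U+\w-\epsilon^{(n-2)/2}G|$ pointwise for $\rho\leq|y|\leq 2\rho$.

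First I would split this into three contributions: $|\U-\epsilon^{(n-2)/2}|y|^{2-n}|$, $|\w|$, and $\epsilon^{(n-2)/2}|G-|y|^{2-n}|$. For the first, using the explicit bubble \eqref{eq:bdry_bubble} and the elementary estimate $\big|\,(\epsilon^2+|y-T_c\epsilon\mathbf{e}_n|^2)^{-(n-2)/2}-|y|^{2-n}\,\big|\leq C\epsilon|y|^{1-n}$ valid for $|y|\geq\rho\gg\epsilon$ (expand in powers of $\epsilon/|y|$), one gets $|\U-\epsilon^{(n-2)/2}|y|^{2-n}|\leq C\epsilon^{n/2}|y|^{1-n}\leq C\rho^{1-n}\epsilon^{n/2}$, which is exactly the last term in the claimed bound. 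For the second, I invoke estimate \eqref{est:w}: for $\rho\leq|y|\leq2\rho$ we have $|\w(y)|\leq C\epsilon^{(n-2)/2}\sum_{a,b}\sum_{|\alpha|=1}^d|\pa^\alpha h_{ab}|(\epsilon+|y|)^{|\alpha|+2-n}\leq C\epsilon^{(n-2)/2}\sum_{a,b}\sum_{|\alpha|=1}^d|\pa^\alpha h_{ab}|\rho^{|\alpha|+2-n}$, matching the first term. For the third, I use the Green's function expansion \eqref{est:Green_funct}: multiplying by $\epsilon^{(n-2)/2}$ gives $\epsilon^{(n-2)/2}|G-|y|^{2-n}|\leq C\epsilon^{(n-2)/2}\sum_{a,b}\sum_{|\alpha|=1}^d|\pa^\alpha h_{ab}||y|^{|\alpha|+2-n}+C\epsilon^{(n-2)/2}|y|^{d+3-n}$ (for $n\geq5$, with the logarithmic variant when $n=3,4$), and for $\rho\leq|y|\leq2\rho$ this is bounded by $C\epsilon^{(n-2)/2}\sum_{a,b}\sum_{|\alpha|=1}^d|\pa^\alpha h_{ab}|\rho^{|\alpha|+2-n}+C\rho^{d+3-n}|\log\rho|\epsilon^{(n-2)/2}$, absorbing the low-dimensional log into the single $|\log\rho|$ factor.

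Adding the three bounds and using that $\chi_\rho$ is supported in $|y|\leq2\rho$ gives precisely the asserted inequality. The only mild subtlety — and the step I would be most careful about — is keeping track of the dimensional cases in the Green's function expansion \eqref{est:Green_funct}: for $n=3,4$ the error term is $C(1+|\log|y||)$ rather than $C|y|^{d+3-n}$, and since $d+3-n=0$ when $n=3$ (so that $\rho^{d+3-n}=1$) and $d+3-n=1$ when $n=4$, one checks directly that in every dimension $n\geq3$ the quantity $\rho^{d+3-n}|\log\rho|$ dominates the relevant error once $\rho<\rho_0<1$; this is why the stated bound carries the $|\log\rho|$ factor uniformly. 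Everything else is a routine comparison of powers of $\rho$ and $\epsilon$ under the hypothesis $0<\epsilon\ll\rho<\rho_0$, together with the observation that outside the support of $\chi_\rho$ the difference is identically zero.
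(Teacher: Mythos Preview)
Your proof is correct and follows essentially the same approach as the paper: write $\ubar-\epsilon^{(n-2)/2}G=\chi_\rho\big(\U+\w-\epsilon^{(n-2)/2}G\big)$ in $M\setminus\Omega_\rho$, then split into the three pieces $|\U-\epsilon^{(n-2)/2}|y|^{2-n}|$, $|\w|$, and $\epsilon^{(n-2)/2}|G-|y|^{2-n}|$ and bound each via \eqref{eq:bdry_bubble}, \eqref{est:w}, and \eqref{est:Green_funct}. One small arithmetic slip: for $n=4$ one has $d=1$, so $d+3-n=0$ (not $1$); this is actually what you need, since $\rho^{0}|\log\rho|=|\log\rho|$ dominates $1+|\log|y||$ for $|y|\sim\rho<\rho_0$, whereas $\rho|\log\rho|$ would not.
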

\begin{proof}
For $x\in M\backslash\Omega_\rho$, let $y=\Psi_{x_0}^{-1}(x) \in \mathbb{R}^n_+\setminus B_\rho^+$.  In $M\setminus \Omega_\rho$, we have
\begin{equation}\label{est:U-G}
\ubar(x)-\epsilon^{\frac{n-2}{2}}G(x)=\chi_{\rho}(y)\left[\U(y)+\w(y)-\epsilon^\frac{n-2}{2}G(\Psi_{x_0}(y))\right].
\end{equation}
Notice that
\begin{align*}
&\U(y)-\epsilon^{\frac{n-2}{2}}|y|^{2-n}\\
=&\epsilon^{\frac{n-2}{2}}|y|^{2-n}\left[\Big(1+\frac{(1+T_c^2)\epsilon^2}{|y|^2}-\frac{2y^nT_c \epsilon}{|y|^2}\Big)^{\frac{2-n}{2}}-1\right]\\
=&(n-2)y^n|y|^{-n}T_c\epsilon^{\frac{n}{2}}+O(\epsilon^{\frac{n+2}{2}}|y|^{-n}),
\end{align*}
then it yields
\begin{equation}\label{est:bubble_G}
|\U-\epsilon^{\frac{n-2}{2}}|y|^{2-n}|\leq C \epsilon^{\frac{n}{2}}\rho^{1-n} \hbox{~~in~~} B_{2\rho}^+\backslash B_\rho^+.
\end{equation}
Combining \eqref{est:bubble_G}, \eqref{est:Green_funct} and \eqref{est:w}, in $M\backslash\Omega_{\rho}$ we obtain 
\begin{align*}
&|\ubar-\epsilon^{\frac{n-2}{2}}G|\\
\leq&|\U-\epsilon^{\frac{n-2}{2}}|y|^{2-n}|+\e^{\frac{n-2}{2}}|G-|y|^{2-n}|+|\w|\\
\leq&C\sum_{a,b=1}^{n-1}\sum_{|\alpha|=1}^d |\pa^\alpha h_{ab}|\rho^{|\alpha|+2-n}\epsilon^{\frac{n-2}{2}}+\underline{C\rho^{d+3-n}|\log \rho|\epsilon^{\frac{n-2}{2}}}+C\rho^{1-n}\epsilon^{\frac{n}{2}},
\end{align*}
\footnote{In view of \eqref{est:Green_funct}, the underlined term can be improved to $C\rho^{d+3-n}\epsilon^{\frac{n-2}{2}}$ when $n \geq 5$ and $C|\log \rho|$ when $n=3,4$. Since this rough estimate goes through in the later part, we adopt it just for simplicity.}when $\epsilon\ll\rho<\rho_0$.
\end{proof}

\begin{lemma}\label{lem:ring2}
If $0<\epsilon\ll\rho<\rho_0$ for some sufficiently small $\rho_0$,
in $M\backslash\Omega_\rho$ there holds
\begin{align*}
&\rho^2\left|\frac{4(n-1)}{n-2}\Delta_{g_{x_0}}\ubar-R_{g_{x_0}}\ubar\right|\\
\leq& C\sum_{a,b=1}^{n-1}\sum_{|\alpha|=1}^d |\pa^\alpha h_{ab}|\rho^{|\alpha|+2-n}\epsilon^{\frac{n-2}{2}}+C\rho^{d+3-n}\epsilon^{\frac{n-2}{2}}+C\rho^{1-n}\epsilon^{\frac{n}{2}}.
\end{align*}
\end{lemma}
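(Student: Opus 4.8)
\textbf{Proof proposal for Lemma \ref{lem:ring2}.}

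The plan is to estimate the elliptic operator applied to $\ubar$ in the region $M\backslash\Omega_\rho$ by splitting according to where the cutoff $\chi_\rho$ is active. On the set $M\backslash\Omega_{2\rho}$ we have $\chi_\rho\equiv 0$, so $\ubar=\epsilon^{(n-2)/2}G$ there, and the defining equation \eqref{def:Green_funct} for the Green's function gives $\frac{4(n-1)}{n-2}\Delta_{g_{x_0}}\ubar-R_{g_{x_0}}\ubar=0$ exactly; hence the entire contribution comes from the transition annulus $\Omega_{2\rho}\backslash\Omega_\rho$, where $\chi_\rho$ interpolates between $1$ and $0$. On that annulus I would write, using $\ubar=\chi_\rho(\U+\w)+(1-\chi_\rho)\epsilon^{(n-2)/2}G$,
\begin{align*}
\frac{4(n-1)}{n-2}\Delta_{g_{x_0}}\ubar-R_{g_{x_0}}\ubar
=&\;\chi_\rho\Big(\tfrac{4(n-1)}{n-2}\Delta_{g_{x_0}}(\U+\w)-R_{g_{x_0}}(\U+\w)\Big)\\
&+\tfrac{4(n-1)}{n-2}\Big(2\langle\nabla\chi_\rho,\nabla(\U+\w-\epsilon^{\frac{n-2}{2}}G)\rangle_{g_{x_0}}+(\Delta_{g_{x_0}}\chi_\rho)(\U+\w-\epsilon^{\frac{n-2}{2}}G)\Big),
\end{align*}
since the $(1-\chi_\rho)\epsilon^{(n-2)/2}G$ piece contributes zero to the operator by \eqref{def:Green_funct}. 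The second group of terms is controlled using $|\nabla\chi_\rho|\leq C\rho^{-1}$, $|\Delta_{g_{x_0}}\chi_\rho|\leq C\rho^{-2}$, the bound on $\U+\w-\epsilon^{(n-2)/2}G$ from Lemma \ref{lem:ring1}, and the analogous gradient bound (coming from \eqref{est:Green_funct}, \eqref{est:bubble_G} and the gradient version of \eqref{est:w}, which each carry an extra factor $\rho^{-1}$ compared to the $C^0$ estimate); multiplying by $\rho^2$ precisely absorbs these weights and yields the stated right-hand side.

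For the first group, $\chi_\rho\big(\frac{4(n-1)}{n-2}\Delta_{g_{x_0}}(\U+\w)-R_{g_{x_0}}(\U+\w)\big)$, I would expand $\Delta_{g_{x_0}}$ in Fermi coordinates as $\Delta_{\mathbb{R}^n}$ plus error terms involving $g_{x_0}^{ik}-\delta_{ik}$, $\partial_k g_{x_0}^{ik}$ and the volume density, all of which are $O(|y|)$ (using \eqref{propr:h} and $\det g_{x_0}=1+O(|y|^{2d+2})$). The Euclidean part is handled by Lemma \ref{lem:phi}: $\Delta(\U+\w)=-n(n-2)\U^{(n+2)/(n-2)}-n(n+2)\U^{4/(n-2)}\w+\frac{n-2}{4(n-1)}\U\partial_i\partial_jS_{ij}+\partial_i(\partial_j\U S_{ij})$, and the lower-order scalar-curvature term is controlled by Proposition \ref{prop:expan_scalar}. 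On the annulus $\rho\leq|y|\leq 2\rho$ the bubble satisfies $\U\sim\epsilon^{(n-2)/2}\rho^{2-n}$, its derivatives gain powers of $\rho^{-1}$, and $|\w|\leq C(\epsilon+|y|)\U$, $|\partial^\beta V|$ obeys \eqref{est:V}; plugging these pointwise bounds in, every term is of the form $\epsilon^{(n-2)/2}\rho^{-n}$ times either $\sum|\partial^\alpha h_{ab}|\rho^{|\alpha|}$ or $\rho^{d+1}$ or $\rho^{-1}\epsilon$, so after multiplying by $\rho^2$ one recovers exactly the three families of terms in the statement. (When $n=3$, $\w=0$ and $d=0$, so only the $\rho^{d+3-n}=\rho^0$ and $\rho^{1-n}\epsilon^{n/2}$ terms survive, consistent with the claim.)

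The main obstacle is bookkeeping rather than conceptual: one must check that each error term — the Christoffel/volume-density corrections to $\Delta_{g_{x_0}}$ acting on $\U$ and on $\w$, the scalar-curvature remainder from Proposition \ref{prop:expan_scalar}, and the cross terms coming from $\nabla\chi_\rho$ — decays at least like the worst of the three advertised rates on the annulus $\{\rho\leq|y|\leq2\rho\}$, where one is allowed to be generous and replace $\epsilon+|y|$ by $\rho$ throughout. The one point requiring a little care is the term $\partial_i(\partial_j\U\,S_{ij})$ and $\U\partial_i\partial_jS_{ij}$ from Lemma \ref{lem:phi}: using \eqref{est:V} these are bounded by $C\epsilon^{(n-2)/2}\sum_{|\alpha|=1}^d|\partial^\alpha h_{ab}|(\epsilon+|y|)^{|\alpha|+2-n-1}$, so on the annulus they contribute $C\epsilon^{(n-2)/2}\sum|\partial^\alpha h_{ab}|\rho^{|\alpha|+1-n}$, and after the $\rho^2$ weight this is $\leq C\epsilon^{(n-2)/2}\sum|\partial^\alpha h_{ab}|\rho^{|\alpha|+3-n}\leq C\epsilon^{(n-2)/2}\sum|\partial^\alpha h_{ab}|\rho^{|\alpha|+2-n}\cdot\rho$, which is absorbed since $\rho<\rho_0\leq1$. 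Once these verifications are assembled, the three bounds combine into the asserted estimate, and the proof is complete. \qed
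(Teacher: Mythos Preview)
Your proposal is correct and follows essentially the same route as the paper. The only cosmetic difference is that the paper inserts the intermediate function $|y|^{2-n}$ and splits your cutoff term $\nabla\chi_\rho\cdot\nabla(\U+\w-\epsilon^{(n-2)/2}G)$ (and its Laplacian analogue) into two pieces, one comparing $\U+\w$ to $\epsilon^{(n-2)/2}|y|^{2-n}$ via \eqref{est:bubble_G} and \eqref{est:w}, the other comparing $\epsilon^{(n-2)/2}|y|^{2-n}$ to $\epsilon^{(n-2)/2}G$ via \eqref{est:Green_funct}; you instead appeal directly to Lemma~\ref{lem:ring1} together with its gradient analogue, which amounts to the same computation repackaged. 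Your treatment of the $\chi_\rho\big(\tfrac{4(n-1)}{n-2}\Delta_{g_{x_0}}(\U+\w)-R_{g_{x_0}}(\U+\w)\big)$ term---splitting off $\Delta_{g_{x_0}}-\Delta_{\mathbb{R}^n}$, invoking Lemma~\ref{lem:phi} for the Euclidean piece, and Proposition~\ref{prop:expan_scalar} for the scalar-curvature remainder---is exactly what the paper does for its term $I_3$.
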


\begin{proof}Notice that $\ubar=\epsilon^\frac{n-2}{2}G$ in $M\backslash\Omega_{2\rho}$, the estimate is trivial by definition of $G$. Then it suffices to estimate the above inequality in $\Omega_{2\rho}\backslash\Omega_{\rho}$. To see this, by \eqref{est:U-G} we have
\begin{align*}
&\Delta_{g_{x_0}}\ubar-\frac{n-2}{4(n-1)}R_{g_{x_0}}\ubar\\
=&(\Delta_{g_{x_0}}\chi_{\rho})(\U+\w-\epsilon^{\frac{n-2}{2}}|y|^{2-n})+2\langle \nabla \chi_\rho,\nabla(\U+\w-\epsilon^{\frac{n-2}{2}}|y|^{2-n})\rangle_{g_{x_0}}\\
&-(\Delta_{g_{x_0}}\chi_{\rho})\epsilon^{\frac{n-2}{2}}(G-|x|^{2-n})-2\epsilon^{\frac{n-2}{2}}\langle\nabla \chi_\rho,\nabla(G-|x|^{2-n})\rangle_{g_{x_0}}\\
&+\chi_\rho\left[\Delta_{g_{x_0}}(\U+\w)-\frac{n-2}{4(n-1)}R_{g_{x_0}}(\U+\w)\right]\\
=&I_1+I_2+I_3,
\end{align*}
where $I_i (i=1,2,3)$ denotes the quantity in each corresponding line. By using \eqref{est:bubble_G} and $|\rho^2\Delta_{g_{x_0}}\chi_\rho|+|\rho\nabla \chi_{\rho}|_{g_{x_0}}\leq C$, we get
$$\rho^2I_1\leq  C\sum_{a,b=1}^{n-1}\sum_{|\alpha|=1}^d |\pa^\alpha h_{ab}|\rho^{|\alpha|+2-n}\epsilon^{\frac{n-2}{2}}+C\rho^{1-n}\epsilon^{\frac{n}{2}}.$$
Similarly \eqref{est:Green_funct} implies
$$\rho^2I_2\leq C\sum_{a,b=1}^{n-1}\sum_{|\alpha|=1}^d |\pa^\alpha h_{ab}|\rho^{|\alpha|+2-n}\epsilon^{\frac{n-2}{2}}+C\rho^{d+3-n}\epsilon^{\frac{n-2}{2}}+C\rho^{1-n}\epsilon^{\frac{n}{2}}.$$
For $I_3$, applying the property \eqref{est:w} of $\w$ and Proposition \ref{prop:expan_scalar}, we get
\begin{align*}
|R_{g_{x_0}}(\U+\w)|\leq C\sum_{a,b=1}^{n-1}\sum_{|\alpha|=1}^d |\pa^\alpha h_{ab}|\rho^{|\alpha|-n}\epsilon^{\frac{n-2}{2}}+C\rho^{d+1-n}\epsilon^{\frac{n-2}{2}}
\end{align*}
and 
\begin{align*}
&|\Delta_{g_{x_0}}(\U+\w)|\\
\leq& |(\Delta_{g_{x_0}}-\Delta_{\mathbb{R}^n})(\U+\w)|+ C\epsilon^{\frac{n+2}{2}}\rho^{-n-2}+C\epsilon^{\frac{n-2}{2}}\sum_{a,b=1}^{n-1}\sum_{|\alpha|=1}^d |\pa^\alpha h_{ab}|\rho^{|\alpha|-n}\\
\leq&C\sum_{a,b=1}^{n-1}\sum_{|\alpha|=1}^d |\pa^\alpha h_{ab}|\rho^{|\alpha|-n}\epsilon^{\frac{n-2}{2}}+C\rho^{d+1-n}\epsilon^{\frac{n-2}{2}}+C\rho^{-n-2}\epsilon^{\frac{n+2}{2}}
\end{align*}
Therefore 
$$\rho^2I_3\leq C\sum_{a,b=1}^{n-1}\sum_{|\alpha|=1}^d |\pa^\alpha h_{ab}|\rho^{|\alpha|+2-n}\epsilon^{\frac{n-2}{2}}+C\rho^{d+3-n}\epsilon^{\frac{n-2}{2}}+C\rho^{-n}\epsilon^{\frac{n+2}{2}}.$$
Collecting all the above estimates on $I_1$-$I_3$, we get the desired assertion.
\end{proof}

We now arrive at the key Proposition \ref{prop:energy_est}.
\begin{proposition}\label{prop:energy_est} 
If $0<\e\ll\rho<\rho_0$ for some sufficiently small $\rho_0$, there holds
\begin{align*}
&\int_M\left[\tfrac{4(n-1)}{n-2}|\nabla \ubar|_{g_{x_0}}^2+R_{g_{x_0}}\ubar^2\right]d\mu_{g_{x_0}}+2(n-1)\int_{\partial M}h_{g_{x_0}}\ubar^2d\sigma_{g_{x_0}}\\
\leq &Y_{a,b}(\mathbb{R}^n,\mathbb{R}^{n-1})\left[a\left(\int_{M}\ubar^\frac{2n}{n-2}d\mu_{g_{x_0}}\right)^{\frac{n-2}{n}}+2(n-1)b\left(\int_{\partial M}\ubar^{\frac{2(n-1)}{n-2}}d\sigma_{g_{x_0}}\right)^{\frac{n-2}{n-1}}\right]\\
&-\epsilon^{n-2}\mathcal{I}(x_0,\rho)-\frac{1}{C}\eta_{\mathcal{Z}^c}(x_0)\lambda^* \epsilon^{n-2}\int_{B_\rho^+}|W_{g_0}(y)|_{g_0}^2(\epsilon+|y|)^{6-2n}dy\\
&-\frac{1}{C}\eta_{\mathcal{Z}^c}(x_0)\lambda^*\epsilon^{n-2}\int_{D_\rho}|\mathring{\pi}_{g_0}(y)|_{g_0}^2(\epsilon+|y|)^{5-2n}d\sigma+C\rho^{2d+4-n}|\log \rho|^2 \epsilon^{n-2}\\
&+C\left(\frac{\epsilon}{\rho}\right)^{n-2}\frac{1}{\log(\rho/\epsilon)}+C^\ast \left(\frac{\epsilon}{\rho}\right)^{n-1},
\end{align*}
where $\eta_{\mathcal{Z}^c}$ is the characteristic function of $\mathcal{Z}^c=\pa M\setminus \mathcal{Z}$ defined on $\pa M$ and $C, C^\ast$ depend on $n,g_0,T_c,\rho_0$.
\end{proposition}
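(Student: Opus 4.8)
The plan is to combine the interior estimate (Proposition \ref{prop:near_pole}) with the two volume estimates (Propositions \ref{prop:volume_a} and \ref{prop:volume_b}) on $B_\rho^+$, and separately control the contribution from the ``neck'' region $\Omega_{2\rho}\setminus\Omega_\rho$ and the exterior region $M\setminus\Omega_{2\rho}$ using Lemmas \ref{lem:ring1} and \ref{lem:ring2} together with the definition of the Green's function. First I would split $E[\ubar]$ as
\begin{align*}
E[\ubar]=\int_{\Omega_\rho}(\cdots)d\mu_{g_{x_0}}+2(n-1)\int_{D_\rho}h_{g_{x_0}}\ubar^2 d\sigma_{g_{x_0}}+\int_{M\setminus\Omega_\rho}(\cdots)d\mu_{g_{x_0}}+2(n-1)\int_{\partial M\setminus D_\rho}h_{g_{x_0}}\ubar^2 d\sigma_{g_{x_0}},
\end{align*}
and on $\Omega_\rho$ apply Proposition \ref{prop:near_pole}, then feed the resulting main terms $4n(n-1)\int_{B_\rho^+}\U^{4/(n-2)}(\U^2+\tfrac{n+2}{n-2}\w^2)$ and $-4(n-1)T_c\int_{D_\rho}\U^{2/(n-2)}(\U^2+2\U\w+\tfrac{n}{n-2}\w^2-\tfrac{n-2}{8(n-1)^2}\U^2 S_{nn}^2)$ into Propositions \ref{prop:volume_a} and \ref{prop:volume_b} to convert them into $aY_{a,b}(\mathbb{R}^n_+,\mathbb{R}^{n-1})(\int_{B_\rho^+}(\U+\w)^{2n/(n-2)})^{(n-2)/n}$ and $2(n-1)bY_{a,b}(\mathbb{R}^n_+,\mathbb{R}^{n-1})(\int_{D_\rho}(\U+\w)^{2(n-1)/(n-2)})^{(n-2)/(n-1)}$ plus error terms.

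Next I would handle the exterior region. On $M\setminus\Omega_\rho$ the function $\ubar$ is close to $\epsilon^{(n-2)/2}G$, and by integration by parts
\begin{align*}
\int_{M\setminus\Omega_\rho}\left(\tfrac{4(n-1)}{n-2}|\nabla\ubar|_{g_{x_0}}^2+R_{g_{x_0}}\ubar^2\right)d\mu_{g_{x_0}}=-\int_{M\setminus\Omega_\rho}\ubar\left(\tfrac{4(n-1)}{n-2}\Delta_{g_{x_0}}\ubar-R_{g_{x_0}}\ubar\right)d\mu_{g_{x_0}}-\int_{\partial^+\Omega_\rho}\tfrac{4(n-1)}{n-2}\ubar\,\partial_\nu\ubar\,d\sigma_{g_{x_0}},
\end{align*}
where on $\partial M\setminus D_\rho$ the boundary term from $G$ vanishes by \eqref{def:Green_funct} and on $\partial^+\Omega_{2\rho}$ it vanishes since $\ubar=\epsilon^{(n-2)/2}G$ there (outside $\Omega_{2\rho}$ the integrand is purely $\epsilon^{n-2}$ times the Green's function quantity, which integrates to a boundary term by \eqref{def:Green_funct}). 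Using Lemmas \ref{lem:ring1} and \ref{lem:ring2}, the bulk integral over the neck $\Omega_{2\rho}\setminus\Omega_\rho$ is bounded by terms of size $\rho^{2d+4-n}|\log\rho|^2\epsilon^{n-2}$, $(\epsilon/\rho)^{n-2}/\log(\rho/\epsilon)$ and $(\epsilon/\rho)^{n-1}$ after integrating over the annulus (volume $\sim\rho^n$) and inserting the pointwise bounds. The boundary integral over $\partial^+B_\rho^+$ of $\tfrac{4(n-1)}{n-2}\ubar\,\partial_i\ubar\,\tfrac{y^i}{|y|}$ combines with the boundary terms already present in Proposition \ref{prop:near_pole} — namely $\int_{\partial^+B_\rho^+}\tfrac{4(n-1)}{n-2}\partial_i\U\U\tfrac{y^i}{|y|}$ and $\int_{\partial^+B_\rho^+}(\U^2\partial_j h_{ij}-\partial_j\U^2 h_{ij})\tfrac{y^i}{|y|}$ — and after expanding $\ubar=\epsilon^{(n-2)/2}|y|^{2-n}+\text{error}$, $G=|y|^{2-n}+\text{error}$, using \eqref{est:bubble_G}, \eqref{est:Green_funct}, \eqref{est:w}, these assemble precisely into $-\epsilon^{n-2}\mathcal{I}(x_0,\rho)$ plus the stated error terms (this is the flux-integral bookkeeping of \cite[pp.~1006--1007]{Brendle-Chen}).

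Finally, for the two negative curvature-integral terms, I would invoke that $\mathcal{Z}$ is the set where the Weyl tensor and trace-free second fundamental form vanish to the relevant order, so that for $x_0\in\mathcal{Z}^c$ the quantities $\sum|\partial^\alpha h_{ab}|^2$ appearing in Proposition \ref{prop:Q_and_h} dominate (up to a fixed constant depending on $g_0$, via conformal invariance of $W_{g_0}$ and $\mathring\pi_{g_0}$ and the construction of conformal Fermi coordinates) the weighted integrals $\int_{B_\rho^+}|W_{g_0}|_{g_0}^2(\epsilon+|y|)^{6-2n}$ and $\int_{D_\rho}|\mathring\pi_{g_0}|_{g_0}^2(\epsilon+|y|)^{5-2n}$; for $x_0\in\mathcal{Z}$ one simply drops these (nonpositive) terms. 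The characteristic function $\eta_{\mathcal{Z}^c}$ records this dichotomy, and absorbing $\tfrac14\lambda^*$ into $\tfrac1C$ handles the constant. The main obstacle is the careful bookkeeping in the neck region: one must verify that all the scattered boundary integrals on $\partial^+B_\rho^+$ (from the interior estimate, from the integration by parts on the exterior, and from the expansions of $\U$, $\w$, $G$) genuinely recombine into $-\epsilon^{n-2}\mathcal{I}(x_0,\rho)$ without leaving an uncontrolled remainder, and that the error exponents $2d+4-n$, $n-2$, $n-1$ are the sharp ones surviving after the cancellations — this is where the choice of the correction term $\w$ and the structure of $\mathcal{I}$ are essential and where the argument parallels \cite[Propositions 14--15]{Brendle2} and the corresponding estimates in \cite{Brendle-Chen,ChenSophie,almaraz5}.
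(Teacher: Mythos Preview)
Your plan matches the paper's proof in structure: split $E[\ubar]$ into $\Omega_\rho$ and $M\setminus\Omega_\rho$, use Propositions~\ref{prop:near_pole}--\ref{prop:volume_b} on the interior, integrate by parts on the exterior, and recombine the $\partial^+B_\rho^+$ boundary terms into $-\epsilon^{n-2}\mathcal{I}(x_0,\rho)$ via the expansions \eqref{est:bubble_G}, \eqref{est:Green_funct}, \eqref{est:w}. The treatment of the $\mathcal{Z}$/$\mathcal{Z}^c$ dichotomy is also the one the paper uses (citing \cite[Corollary~3.10]{almaraz5}).

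There is, however, one genuine imprecision in your exterior step that would cause trouble if taken literally. Your displayed integration-by-parts identity keeps the bulk term as $-\int_{M\setminus\Omega_\rho}\ubar\,L_{g_{x_0}}\ubar$, with $\ubar$ itself as the second factor. In the neck $\Omega_{2\rho}\setminus\Omega_\rho$ one only has $|\ubar|\sim\epsilon^{(n-2)/2}\rho^{2-n}$, so pairing this with the bound of Lemma~\ref{lem:ring2} and integrating over volume $\sim\rho^n$ produces an error of order $\epsilon^{n-2}\rho^{d+3-n}$, which blows up as $\rho\to 0$ for $n\geq 5$ and is strictly worse than the $\epsilon^{n-2}\rho^{2d+4-n}|\log\rho|^2$ you claim. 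The paper avoids this by writing the exterior energy as
\[
\int_{M\setminus\Omega_\rho}(-L_{g_{x_0}}\ubar)\bigl(\ubar-\epsilon^{\frac{n-2}{2}}G\bigr)\,d\mu_{g_{x_0}}
+\tfrac{4(n-1)}{n-2}\int_{\partial(M\setminus\Omega_\rho)}\Bigl[\partial_\nu\ubar\,\ubar+\epsilon^{\frac{n-2}{2}}\bigl(\ubar\,\partial_\nu G-G\,\partial_\nu\ubar\bigr)\Bigr]d\sigma_{g_{x_0}}+II_3,
\]
obtained from Green's identity and $L_{g_{x_0}}G=0$. Now \emph{both} factors in the bulk integral are small (Lemmas~\ref{lem:ring1} and~\ref{lem:ring2}), giving the squared error $\epsilon^{n-2}\rho^{2d+4-n}|\log\rho|^2$, and the additional boundary piece $\epsilon^{(n-2)/2}(\ubar\,\partial_\nu G-G\,\partial_\nu\ubar)$ on $\partial^+B_\rho^+$ is exactly what is needed to produce the second half of $\mathcal{I}(x_0,\rho)$. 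Your verbal remarks about ``the boundary term from $G$'' suggest you have this in mind, but the subtraction must appear in the \emph{bulk} factor, not only in the boundary bookkeeping; otherwise the neck estimate does not close.
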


\begin{proof}
Observe that
\begin{align*}
&\int_{M\backslash\Omega_\rho}\left[\tfrac{4(n-1)}{n-2}|\nabla \ubar|_{g_{x_0}}^2+R_{g_{x_0}}\ubar^2\right]d\mu_{g_{x_0}}+2(n-1)\int_{\partial M\backslash \Omega_\rho}h_{g_{x_0}}\ubar^2 d\sigma_{g_{x_0}}\\
=&\int_{M\backslash\Omega_\rho}\left(-\frac{4(n-1)}{n-2}\Delta_{g_{x_0}}\ubar+R_{g_{x_0}}\ubar^2\right)(\ubar-\epsilon^{\frac{n-2}{2}}G)d\mu_{g_{x_0}}\\
&+\frac{4(n-1)}{n-2}\int_{\partial(M\backslash\Omega_\rho)}\left[\frac{\partial\ubar}{\partial{\nu_{g_{x_0}}}}\ubar+\epsilon^{\frac{n-2}{2}}\left(\ubar\frac{\partial G}{\partial\nu_{g_{x_0}} }-G\frac{\partial \ubar}{\partial\nu_{g_{x_0}}}\right)\right]d\sigma_{g_{x_0}}\\
&+2(n-1)\int_{\partial M\backslash \Omega_\rho}h_{g_{x_0}}\ubar^2d\sigma_{g_{x_0}}\\
=&II_1+II_2+II_3,
\end{align*}
where $II_i(i=1,2,3)$ denotes the quantity in each corresponding line on the right hand side of the first identity. By Lemmas \ref{lem:ring1} and \ref{lem:ring2}, we get
\begin{align*}
&\sup_{M\backslash\Omega_\rho}\left[|\ubar-\epsilon^{\frac{n-2}{2}}G|+\rho^2\left|\frac{4(n-1)}{n-2}\Delta_{g_{x_0}}\ubar-R_{g_{x_0}}\ubar\right|\right]\\
\leq&C\sum_{a,b=1}^{n-1}\sum_{|\alpha|=1}^d |\pa^\alpha h_{ab}| \rho^{|\alpha|+2-n}\epsilon^{\frac{n-2}{2}}+C\rho^{d+3-n}|\log \rho|\epsilon^{\frac{n-2}{2}}+C\rho^{1-n}\epsilon^{\frac{n}{2}}.
\end{align*}
From this, one can estimate $II_1$ as
\begin{align}\label{est:I_1}
II_1=&\int_{\Omega_{2\rho}\backslash\bar \Omega_\rho}\left(-\tfrac{4(n-1)}{n-2}\Delta_{g_{x_0}}\ubar+R_{g_{x_0}}\ubar\right)(\ubar-\epsilon^{\frac{n-2}{2}}G)d\mu_{g_{x_0}}\no\\
\leq &C\sum_{a,b=1}^{n-1}\sum_{|\alpha|=1}^d |\pa^\alpha h_{ab}|^2 \rho^{2|\alpha|+2-n}\epsilon^{n-2}+C\rho^{2d+4-n}|\log \rho|^2 \epsilon^{n-2}+C\rho^{-n}\epsilon^n.
\end{align}
For $II_2$, we divide the integral into two parts $II_2=II^{(1)}_2+II^{(2)}_2$ according to $\partial(M\backslash\Omega_\rho)=(\partial M\backslash \Omega_\rho)\cup(\partial \Omega_\rho\backslash\partial M)$. Namely $II^{(1)}_2$ is the integral over $\partial M\backslash \Omega_\rho$ while $II^{(2)}_2$ is over $\partial \Omega_\rho\backslash\partial M$. Let us deal with $II^{(1)}_2$ first. In $\partial M\backslash \Omega_\rho$, by Lemma \ref{lem:phi}, \eqref{prob:half-space}, \eqref{def:Green_funct} and \eqref{mean_curv_Fermi}, we have
\begin{align*}
&\sup_{\partial M\cap(\Omega_{2\rho}\backslash \bar \Omega_\rho)}\left|\frac{\partial\ubar}{\partial\nu_{g_{x_0}}}\right|\\
\leq& \sup_{\partial M\cap(\Omega_{2\rho}\backslash \bar \Omega_\rho)}\left[|\partial_n\U+\partial_n\w|+\e^{\frac{n-2}{2}}\Big|\frac{\pa G}{\pa \nu_{g_{x_0}}}\Big|\right]\\
\leq& C\epsilon^{\frac{n}{2}}\rho^{-n}+C\epsilon^{\frac{n}{2}}\sum_{a,b=1}^{n-1}\sum_{|\alpha|=1}^d |\pa^\alpha h_{ab}|\rho^{|\alpha|-n}+C\e^{\frac{n-2}{2}}\rho^{2d+3-n}.
\end{align*}
Using \eqref{def:Green_funct}, Lemma \ref{lem:ring1}, \eqref{mean_curv_Fermi}  and $\ubar=\e^{\frac{n-2}{2}}G$ in $M\setminus \Omega_{2\rho}$, we have
\begin{align}\label{est:I_2_1}
&II^{(1)}_2+II_3\nonumber\\
=&\frac{4(n-1)}{n-2}\int_{\partial M\backslash\bar{\Omega}_\rho}\left[\frac{\partial\ubar}{\partial{\nu_{g_{x_0}}}}\ubar+\epsilon^\frac{n-2}{2}\Big(\ubar\frac{\partial G}{\partial\nu_{g_{x_0}}}-G\frac{\partial \ubar}{\partial\nu_{g_{x_0}}}\Big)\right]d\sigma_{g_{x_0}}\no\\
&+II_3\nonumber\\
=&\frac{4(n-1)}{n-2}\int_{\partial M\cap(\Omega_{2\rho}\backslash\bar{\Omega}_\rho)}\frac{\partial \ubar}{\partial\nu_{g_{x_0}}}(\ubar-\epsilon^\frac{n-2}{2}G)d\sigma_{g_{x_0}}\no\\
&+2(n-1)\int_{\pa M \cap (\Omega_{2\rho}\setminus \bar \Omega_\rho)}h_{g_{x_0}}\ubar(\ubar-\e^{\frac{n-2}{2}}G) d\sigma_{g_{x_0}}\nonumber\\
\leq &C\sum_{a,b=1}^{n-1}\sum_{|\alpha|=1}^d |\pa^\alpha h_{ab}|^2\rho^{2|\alpha|+1-n}\epsilon^{n-1}+|\pa^\alpha h_{ab}|\rho^{|\alpha|+1-n}\epsilon^{n-1}\no\\
&+C\rho^{d+2-n}\epsilon^{n-1}+C\rho^{-n}\epsilon^n+C\rho^{2d+4-n}|\log \rho|\epsilon^{n-2}.
\end{align}
Next we start to estimate $II^{(2)}_2$ whose integral domain is $\partial \Omega_\rho\backslash \partial M$. It is not hard to verify that the outward unit normal $\nu_{g_{x_0}}$ on $\pa \Omega_\rho\backslash \pa M:=\Psi_{x_0}(\pa^+ B_\rho^+)$ is given by
$$\nu_{g_{x_0}}=\frac{g_{x_0}^{ik}y^k}{\|y\|} \pa_{y^i} \hbox{~~for~~} y \in \pa^+ B_\rho^+,$$
where $\|y\|^2:=g_{x_0}^{kl}y^ky^l\leq\rho^2(1+C|h|)$ on $\pa^+ B_\rho^+$. Since $\ubar=\U+\psi$ on $\pa^+ B_\rho^+$, from \eqref{est:expan_inverse_g_x_0} we estimate
\begin{align}\label{est:II_2^(2)_first}
&\int_{\partial \Omega_\rho\backslash \partial M}\frac{\partial \ubar}{\partial\nu_{g_{x_0}}}\ubar d\sigma_{g_{x_0}}\no\\
=&-\int_{\partial^+B_\rho^+}g^{ij}_{x_0}\partial_i\ubar \frac{y^j}{\|y\|}\ubar d\sigma+O(\rho^{2d+4-n}\epsilon^{n-2})\no\\
\leq&\int_{\partial^+B_\rho^+}\left(-\partial_i\ubar+\partial_j\ubar h_{ij}\right)\frac{y^i}{|y|}(1+C|h|)\ubar d\sigma\no\\
&+C\sum_{a,b=1}^{n-1}\sum_{|\alpha|=1}^d |\pa^\alpha h_{ab}|^2\rho^{2|\alpha|+2-n}\epsilon^{n-2}+O(\rho^{2d+4-n}\epsilon^{n-2})\no\\
\leq &\int_{\partial^+B_\rho^+}\left(-\partial_i\U +\partial_j\U h_{ij}\right)\frac{y^i}{|y|}\U d\sigma+C\sum_{a,b=1}^{n-1}\sum_{|\alpha|=1}^d |\pa^\alpha h_{ab}|\rho^{|\alpha|+2-n}\epsilon^{n-2}\no\\
&\quad+C\sum_{a,b=1}^{n-1}\sum_{|\alpha|=1}^d |\pa^\alpha h_{ab}|^2\rho^{2|\alpha|+2-n}\epsilon^{n-2}+C\rho^{2d+4-n}\epsilon^{n-2}.
% \leq& -\int_{\partial^+B_\rho^+}\partial_i\U\U\frac{x_i}{|x|}d\sigma+C\sum_{i,j=1}^n\sum_{1\leq |\alpha|\leq d}|h_{ij,\alpha}|\rho^{|\alpha|+2-n}\epsilon^{n-2}\\
% &\quad+C\sum_{i,j=1}^n\sum_{1\leq |\alpha|\leq d}|h_{ij,\alpha}|^2\rho^{2|\alpha|+2-n}\epsilon^{n-2}+C\rho^{2d+4-n}\epsilon^{n-2}
\end{align}
Similarly we have
\begin{align}\label{est:energy_outside}
&\epsilon^{\frac{n-2}{2}}\int_{\partial \Omega_\rho\backslash \partial M}\left(\ubar \frac{\partial G}{\partial\nu_{g_{x_0}}}-G\frac{\partial G}{\partial \nu_{g_{x_0}}}\right)d\sigma_{g_{x_0}}\no\\
\leq &-\epsilon^\frac{n-2}{2}\int_{\partial^+B_\rho^+}\left(\ubar \partial_iG-G\partial_i\ubar \right)\frac{y^i}{|y|}(1+C|h|)d\sigma\no\\
&+\epsilon^\frac{n-2}{2}\int_{\partial^+B_\rho^+}h_{ij}\frac{y^i}{|y|}(1+C|h|)\left(\ubar\partial_jG-G\partial_j\ubar\right)d\sigma\no\\
&+C\sum_{a,b=1}^{n-1}\sum_{|\alpha|=1}^d |\pa^\alpha h_{ab}|^2\rho^{2|\alpha|+2-n}\epsilon^{n-2}+C\rho^{2d+4-n}\epsilon^{n-2}\no\\
\leq &-\epsilon^\frac{n-2}{2}\int_{\partial^+B_\rho^+}\left(\U \partial_iG-G\partial_i\U \right)\frac{y^i}{|y|}d\sigma\no\\
&+\epsilon^\frac{n-2}{2}\int_{\partial^+B_\rho^+}h_{ij}\frac{y^i}{|y|}\left(\U\partial_jG-G\partial_j\U\right)d\sigma\no\\
&+C\sum_{a,b=1}^{n-1}\sum_{|\alpha|=1}^d |\pa^\alpha h_{ab}|\rho^{|\alpha|+2-n}\epsilon^{n-2}+C\sum_{a,b=1}^{n-1}\sum_{|\alpha|=1}^d |\pa^\alpha h_{ab}|^2\rho^{2|\alpha|+2-n}\epsilon^{n-2}\no\\
&+C\rho^{2d+4-n}\epsilon^{n-2}.
\end{align}
From \eqref{est:Green_funct} and \eqref{est:bubble_G}, on $\partial^+B_\rho^+$ we get
% \begin{align} (\textcolor{red}{double check})
% &|(\epsilon^2+|x-T_c\epsilon e_n|^2)^{-\frac{n}{2}}-|x|^{-n}|\\
% \leq& |(\epsilon^2+|x-T_c\epsilon|^2)^{-\frac{n}{2}}-|x-T_c\epsilon e_n|^{-n}|+||x-T_c\epsilon e_n|^{-n}-|x|^{-n}|\\
% \leq& C(n)\epsilon^2|x-T_c\epsilon e_n|^{-n-2}+C(T_c,n)\epsilon |x|^{-n-1}\leq C(T_c,n)\epsilon|x|^{-n}
% \end{align}
% on $\partial^+B_\rho^+$, therefore,
% \begin{align}
% &|\partial_i\U-\epsilon^\frac{n-2}{2}\partial_i|x|^{2-n}|\\
% =&(n-2)\epsilon^{\frac{n-2}{2}}|(\epsilon^2+|x_i-T_c\epsilon e_n|^2)^{-n/2}(x-T_c\epsilon e_n)-|x|^{-n}x_i|\\
% \leq &C\epsilon^{\frac{n}{2}}|x|^{-n}
% \end{align}
\begin{align*}
&\epsilon^{\frac{n-2}{2}}|\partial_i\U G-\partial_iG\U|\\
\leq& |\partial_i\U(\epsilon^{\frac{n-2}{2}}G-\U)|+|\U\partial_i(\epsilon^{\frac{n-2}{2}}G-\U)|\\
\leq&C\epsilon^{n-2}\sum_{a,b=1}^{n-1}\sum_{|\alpha|=1}^d |\pa^\alpha h_{ab}|\rho^{|\alpha|+3-2n}+C\epsilon^{n-2}\rho^{d+4-2n}|\log \rho|+C\epsilon^{n-1}\rho^{2-2n}
\end{align*}
and then
\begin{align}\label{est:RHS2}
&\epsilon^\frac{n-2}{2}\int_{\partial^+B_\rho^+}h_{ij}\frac{y^i}{|y|}\left(\U\partial_jG-G\partial_j\U\right)d\sigma\no\\
\leq &C\epsilon^{n-2}\sum_{a,b=1}^{n-1}\sum_{|\alpha|=1}^d |\pa^\alpha h_{ab}|^2\rho^{2|\alpha|+2-n}+C\rho^{2d+4-n}|\log \rho|\epsilon^{n-2}+C\epsilon^{n-1}\rho^{2-n}.
\end{align}
Hence plugging \eqref{est:RHS2} into \eqref{est:energy_outside}, we obtain
\begin{align}\label{est:II_2^(2)_second}
&\epsilon^{\frac{n-2}{2}}\int_{\partial \Omega_\rho\backslash \partial M}(\ubar \frac{\partial G}{\partial \nu_{g_{x_0}}}-G\frac{\partial G}{\partial \nu_{g_{x_0}}})d\sigma_{g_{x_0}}\no\\
\leq &-\epsilon^\frac{n-2}{2}\int_{\partial^+B_\rho^+}\left(\U \partial_iG-G\partial_i\U \right)\frac{y^i}{|y|}d\sigma+C\sum_{a,b=1}^{n-1}\sum_{|\alpha|=1}^d |\pa^\alpha h_{ab}|\rho^{|\alpha|+2-n}\epsilon^{n-2}\no\\
&+C\sum_{a,b=1}^{n-1}\sum_{|\alpha|=1}^d |\pa^\alpha h_{ab}|^2\rho^{2|\alpha|+2-n}\epsilon^{n-2}+C\rho^{2d+4-n}|\log \rho|\epsilon^{n-2}+C\epsilon^{n-1}\rho^{2-n}.
\end{align}
Consequently combining \eqref{est:II_2^(2)_first} and \eqref{est:II_2^(2)_second}, we can get 
\begin{align}\label{est:I_2_2}
II^{(2)}_2\leq& -\tfrac{4(n-1)}{n-2}\int_{\partial^+B_\rho^+}\left[\partial_i\U\U-\partial_j\U\U h_{ij}+\epsilon^\frac{n-2}{2}\left(\U \partial_iG-G\partial_i\U \right)\right]\frac{y^i}{|y|}d\sigma\no\\
&+C\sum_{a,b=1}^{n-1}\sum_{|\alpha|=1}^d |\pa^\alpha h_{ab}|\rho^{|\alpha|+2-n}\epsilon^{n-2}+C\sum_{a,b=1}^{n-1}\sum_{|\alpha|=1}^d |\pa^\alpha h_{ab}|^2\rho^{2|\alpha|+2-n}\epsilon^{n-2}\no\\
&+C\rho^{2d+4-n}|\log \rho|\epsilon^{n-2}+C\epsilon^{n-1}\rho^{2-n}.
\end{align}
Therefore collecting the estimates \eqref{est:I_1} for $II_1$, \eqref{est:I_2_1} for $II_2^{(1)}+II_3$ and \eqref{est:I_2_2} for $II^{(2)}_2$ together, when $\e\ll\rho<\rho_0$ we  obtain
 \begin{align}\label{energy_outside_ball}
&\int_{M\backslash\Omega_\rho}\left[\tfrac{4(n-1)}{n-2}|\nabla\ubar|_{g_{x_0}}^2+R_{g_{x_0}}\ubar^2\right]d\mu_{g_{x_0}}+2(n-1)\int_{\partial M\backslash\Omega_\rho}h_{g_{x_0}}\ubar^2 d\sigma_{g_{x_0}}\no\\
\leq &\frac{4(n-1)}{n-2}\int_{\partial^+B_\rho^+}\left[-\partial_i\U\U+\partial_j\U\U h_{ij}-\epsilon^{\frac{n-2}{2}}(\U\partial_iG-G\partial_i\U)\right]\frac{y^i}{|y|} d\sigma\no\\
&+C\sum_{a,b=1}^{n-1}\sum_{|\alpha|=1}^d |\pa^\alpha h_{ab}|\rho^{|\alpha|+2-n}\epsilon^{n-2}+C\sum_{a,b=1}^{n-1}\sum_{|\alpha|=1}^d |\pa^\alpha h_{ab}|^2\rho^{2|\alpha|+2-n}\epsilon^{n-2}\no\\
&+C\rho^{2d+4-n}|\log \rho|^2\epsilon^{n-2}+C\rho^{2-n}\epsilon^{n-1}.
\end{align}

Finally, since  $d\mu_{g_{x_0}}=\big(1+O(|y|^{2d+2})\big)dy$ and $d\sigma_{g_{x_0}}=\big(1+O(|y|^{2d+2})\big)d\sigma$ under the Fermi coordinates around $x_0\in \pa M$, noticing that Propositions \ref{prop:near_pole}-\ref{prop:volume_b} and \eqref{energy_outside_ball} give the estimates of energy $E[\ubar]$ in the interior of $B_\rho^+=\Psi_{x_0}^{-1}(\Omega_\rho)$ and in the exterior of $\Omega_\rho$, respectively, we conclude that
 \begin{align}\label{est:total_energy}
&\int_{M}\left[\tfrac{4(n-1)}{n-2}|\nabla\ubar|_{g_{x_0}}^2+R_{g_{x_0}}\ubar^2\right]d\mu_{g_{x_0}}+2(n-1)\int_{\partial M}h_{g_{x_0}}\ubar^2d\sigma_{g_{x_0}}\no\\
\leq &Y_{a,b}(\mathbb{R}^n,\mathbb{R}^{n-1})\left[a\left(\int_{M}\ubar^\frac{2n}{n-2}d\mu_{g_{x_0}}\right)^{\frac{n-2}{n}}+2(n-1)b\left(\int_{\partial M}\ubar^{\frac{2(n-1)}{n-2}}d\sigma_{g_{x_0}}\right)^{\frac{n-2}{n-1}}\right]\no\\
&+\int_{\pa^+B_\rho^+}(\U^2\pa_j h_{ij}+\frac{n}{n-2}\pa_j\U^2 h_{ij})\frac{y^i}{|y|}d\sigma\no\\
&-\frac{4(n-1)}{n-2}\epsilon^{\frac{n-2}{2}}\int_{\partial^+ B_\rho^+}(\U\partial_iG-G\partial_i\U)\frac{y^i}{|y|}d\sigma\no\\
&-\frac{1}{4}\lambda^* \sum_{a,b=1}^{n-1}\sum_{|\alpha|=1}^d |\pa^\alpha h_{ab}|^2\epsilon^{n-2}\int_{B_\rho^+}(\epsilon+|y|)^{2|\alpha|+2-2n}dy\no\\
&+C\sum_{a,b=1}^{n-1}\sum_{|\alpha|=1}^d |\pa^\alpha h_{ab}|^2\epsilon^{n-1}\rho\int_{D_\rho}(\epsilon+|y|)^{2|\alpha|+2-2n}d\sigma\no\\
&+C\sum_{a,b=1}^{n-1}\sum_{|\alpha|=1}^d |\pa^\alpha h_{ab}|\rho^{|\alpha|+2-n}\epsilon^{n-2}+C\rho^{2d+4-n}|\log \rho|^2 \epsilon^{n-2}\no\\
&+C\epsilon^{n-1}\rho^{2-n},
\end{align}
where we have used the following estimate:
\begin{align*}
&\epsilon^n\int_{B_\rho^+}(\epsilon+|y|)^{2|\alpha|+1-2n}dy\\
\leq& C\epsilon^{n-1}\int_{B_\rho^+}(\epsilon+|y|)^{2|\alpha|+2-2n}dy
\leq\frac{\lambda^\ast}{4} \epsilon^{n-2}\int_{B_\rho^+}(\epsilon+|y|)^{2|\alpha|+2-2n}dy
\end{align*}
by choosing $\e\ll\rho<\rho_0$.
By \eqref{est:Green_funct} and the expression \eqref{eq:bdry_bubble} of $\U$, we get
\begin{align}\label{est:energy_mass}
&\int_{\pa^+B_\rho^+}(\U^2\pa_j h_{ij}+\frac{n}{n-2}\pa_j \U^2 h_{ij})\frac{y^i}{|y|}d\sigma\no\\
&-\frac{4(n-1)}{n-2}\epsilon^{\frac{n-2}{2}}\int_{\partial^+ B_\rho^+}(\U\partial_iG-G\partial_i\U)\frac{y^i}{|y|}d\sigma\no\\
\leq& -\epsilon^{n-2}\mathcal{I}(x_0,\rho)+C\sum_{a,b=1}^{n-1}\sum_{|\alpha|=1}^d |\pa^\alpha h_{ab}|\rho^{|\alpha|+1-n}\epsilon^{n-1}+C\epsilon^{n-1}\rho^{1-n}.
\end{align}
Notice that 
$$|W_{g_0}(x)|_{g_0}=f_{x_0}^{\frac{4}{n-2}}|W_{g_{x_0}}(x)|_{g_{x_0}}\leq C |\pa^2 h|+C|\pa h| \quad\hbox{~~in~~} M$$
and
$$|\mathring{\pi}_{g_0}(x)|_{g_0}=f_{x_0}^{\frac{2}{n-2}}|W_{g_{x_0}}(x)|_{g_{x_0}}\leq C |\pa h| \quad\hbox{~~on~~}\pa M.$$
By choosing $\rho_0$ small enough with all $\rho<\rho_0$, it is not hard to show that
$$C\epsilon^{n-1}\rho\int_{D_\rho}(\epsilon+|y|)^{2|\alpha|+2-2n}d\sigma\leq \frac{\lambda^\ast}{8}\epsilon^{n-2}\int_{B_\rho^+}(\epsilon+|y|)^{2|\alpha|+2-2n}dy.$$
Recall that we define by $\mathcal{Z}$ the set of all points $x_0\in \partial M$ satisfying
\begin{align*}
\limsup_{x\to x_0} d_{g_0}(x,x_0)^{2-d}|W_{g_0}(x)|_{g_0}=\limsup_{x\to x_0} d_{g_0}(x,x_0)^{1-d}|\mathring{\pi}_{g_0}(x)|_{g_0}=0.
\end{align*}
From these estimates, \eqref{est:total_energy} and \eqref{est:energy_mass}, a similar argument in \cite[Corollary 3.10]{almaraz5} yields
 \begin{align*}
&\int_{M}\left[\tfrac{4(n-1)}{n-2}|\nabla\ubar|_{g_{x_0}}^2+R_{g_{x_0}}\ubar^2\right]d\mu_{g_{x_0}}+2(n-1)\int_{\partial M}h_{g_{x_0}}\ubar^2d\sigma_{g_{x_0}}\\
\leq &Y_{a,b}(\mathbb{R}^n,\mathbb{R}^{n-1})\left[a\left(\int_{M}\ubar^\frac{2n}{n-2}d\mu_{g_{x_0}}\right)^{\frac{n-2}{n}}+2(n-1)b\left(\int_{\partial M}\ubar^{\frac{2(n-1)}{n-2}}d\sigma_{g_{x_0}}\right)^{\frac{n-2}{n-1}}\right]\\
&-\epsilon^{n-2}\mathcal{I}(x_0,\rho)-\frac{1}{C}\eta_{\mathcal{Z}^c}(x_0)\lambda^* \epsilon^{n-2}\int_{B_\rho^+}|W_{g_0}(y)|_{g_0}^2(\epsilon+|y|)^{6-2n}dy\\
&-\frac{1}{C}\eta_{\mathcal{Z}^c}(x_0)\lambda^*\epsilon^{n-2}\int_{D_\rho}|\mathring{\pi}_{g_0}(y)|_{g_0}^2(\epsilon+|y|)^{5-2n}d\sigma+C^\ast\rho^{2d+4-n}|\log \rho|^2 \epsilon^{n-2}\\
&+C\left(\frac{\epsilon}{\rho}\right)^{n-2}\frac{1}{\log(\rho/\epsilon)}+C\left(\frac{\epsilon}{\rho}\right)^{n-1},
\end{align*}
by recalling that $\eta_{\mathcal{Z}^c}$ is the characteristic function of $\mathcal{Z}^c=\pa M\setminus \mathcal{Z}$.
\end{proof}

Next we describe the continuity of $\mathcal{I}(x_0,\rho)$ over $\mathcal{Z}$ as in \cite[Proposition 3.11]{almaraz5} and some characterization of its limit as $\rho \to 0$ in \cite[Proposition 3.12]{almaraz5}, see also \cite[Proposition 4.3]{Brendle-Chen}.  We restate them here for convenience.
\begin{proposition}\label{prop:continuity_I(x_0,rho)}
The functions $\mathcal{I}(x_0,\rho)$ converge to a continuous function $\mathcal{I}(x_0):\mathcal{Z}\to\mathbb{R}$~ uniformly for all $x_0 \in \mathcal{Z}$, as $\rho\to 0$.
\end{proposition}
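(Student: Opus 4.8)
The plan is to follow the strategy of \cite[Proposition 3.11]{almaraz5} (compare also \cite[Proposition 4.3]{Brendle-Chen}), adapting it to the present setting in which $\partial M$ need not be umbilic; let me sketch the main steps and flag where the non-umbilic terms enter.

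First one would establish an \emph{almost-monotonicity} estimate for $\rho\mapsto\mathcal{I}(x_0,\rho)$. Given $0<\rho_2<\rho_1<\rho_0$, the quantity $\mathcal{I}(x_0,\rho_1)-\mathcal{I}(x_0,\rho_2)$ is rewritten, by the divergence theorem applied to the $h$-flux term and by a Green-type identity comparing $G_{x_0}$ with the Euclidean fundamental solution $|y|^{2-n}$ applied to the $G$-flux term, as a bulk integral over the half-annulus $B_{\rho_1}^+\setminus\overline{B_{\rho_2}^+}$ plus a boundary integral over $D_{\rho_1}\setminus D_{\rho_2}$. Since $(g_{x_0})_{ni}=\delta_{ni}$ in the conformal Fermi coordinates one has $\nu_{g_{x_0}}=-\partial_n$, so the $D_{\rho_1}\setminus D_{\rho_2}$ contribution is controlled by the Robin condition \eqref{def:Green_funct} together with $h_{g_{x_0}}=O(|y|^{2d+1})$ from \eqref{mean_curv_Fermi}; this is exactly the point at which the possibly non-umbilic boundary enters, just as in Propositions~\ref{prop:near_pole}--\ref{prop:volume_b} and in the energy estimate Proposition~\ref{prop:energy_est}. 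The bulk integrand is then analyzed using the refined scalar-curvature expansion of Proposition~\ref{prop:expan_scalar}, the structure \eqref{propr:h} of $h$ in conformal Fermi coordinates, and the Green's function estimates \eqref{est:Green_funct}. Its most dangerous pieces — those carrying $\partial_i\partial_kH_{ik}$ and the quadratic-in-$H$ part of $R_{g_{x_0}}$ — must be shown either to cancel against the divergence generated by the $h$-flux term, or to integrate by parts into a boundary term of size $O(\rho_1^{\,2d+4-n}|\log\rho_1|)$; this cancellation is precisely why $\mathcal{I}$ is defined with this particular combination of the two fluxes. What remains is a lower-order bulk remainder dominated by $C\rho_1^{\,2d+4-n}|\log\rho_1|$ together with integrals of $|W_{g_0}|^2$ and $|\mathring{\pi}_{g_0}|^2$ against negative powers of $|y|$, where one uses $|W_{g_0}|\le C(|\partial^2h|+|\partial h|)$ and $|\mathring{\pi}_{g_0}|\le C|\partial h|$.

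Since $2d+4-n\ge1$, and since for $x_0\in\mathcal{Z}$ the defining conditions $\limsup_{x\to x_0}d_{g_0}(x,x_0)^{2-d}|W_{g_0}(x)|_{g_0}=0$ and $\limsup_{x\to x_0}d_{g_0}(x,x_0)^{1-d}|\mathring{\pi}_{g_0}(x)|_{g_0}=0$ force all the remainder integrals above to vanish as $\rho_1\to0$, one concludes — using that the conformal Fermi normalization constants can be chosen uniformly over the compact set $\partial M$ — that the family $\{\mathcal{I}(x_0,\cdot)\}_{x_0\in\mathcal{Z}}$ is uniformly Cauchy as $\rho\searrow0$. One then defines $\mathcal{I}(x_0):=\lim_{\rho\to0}\mathcal{I}(x_0,\rho)$, and the estimate above gives $\sup_{x_0\in\mathcal{Z}}|\mathcal{I}(x_0,\rho)-\mathcal{I}(x_0)|\to0$. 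For the continuity claim, one checks that for each fixed small $\rho$ the map $x_0\mapsto\mathcal{I}(x_0,\rho)$ is continuous on $\partial M$: the conformal factor $f_{x_0}$ and the metric $g_{x_0}$ in Fermi coordinates depend smoothly on $x_0$ by Marques' construction \cite{marques2}, and $G_{x_0}$ depends continuously on $x_0$ in $C^1(\partial^+B_\rho^+)$ by standard elliptic estimates applied to \eqref{def:Green_funct}. A uniform limit of continuous functions being continuous, $\mathcal{I}\colon\mathcal{Z}\to\mathbb{R}$ is then continuous.

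The main difficulty will be the bookkeeping in the almost-monotonicity step: carefully isolating the $\partial_i\partial_kH_{ik}$ and quadratic-$H$ parts of $R_{g_{x_0}}$, matching them with the divergence structure coming from the $h$-flux so that only the genuinely lower-order remainder survives, and tracking every leftover term with a constant uniform in $x_0$ — including the logarithmic corrections in \eqref{est:Green_funct} present when $n=3,4$. Apart from the non-umbilic boundary contributions, which are absorbed via \eqref{mean_curv_Fermi} exactly as above, this is entirely parallel to \cite[Proposition~3.11]{almaraz5}.
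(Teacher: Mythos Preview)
Your proposal is correct and matches the paper's approach exactly: the paper does not give its own argument for this proposition but simply cites \cite[Proposition~3.11]{almaraz5} (and \cite[Proposition~4.3]{Brendle-Chen}), restating the result for convenience. Your sketch is a faithful outline of that cited proof, including the adaptation to the possibly non-umbilic boundary via \eqref{mean_curv_Fermi}, so there is nothing to add.
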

\begin{proposition}\label{propo19} 
Let $x_0\in\mathcal{Z}$ and consider inverted coordinates $\Phi: y \in \bar M \setminus\{x_0\} \mapsto z:=y/|y|^2$, where $y=(y^1,\cdots,y^n)$ are Fermi coordinates centered at $x_0$. If we define the metric $\bar{g}_{x_0}=\Phi_\ast(G_{x_0}^{4/(n-2)}g_{x_0})$ on $\bar M\backslash\{x_0\}$, then the following statements hold:

(i) $(\bar M\backslash\{x_0\},\bar{g}_{x_0})$ is an asymptotically flat manifold with order  $d+1>\frac{n-2}{2}$ (in the sense of Definition \ref{def:asym}), and  satisfies $R_{\bar{g}_{x_0}}\equiv 0$ and $h_{\bar{g}_{x_0}}\equiv 0$.

(ii) We have 
\begin{align*}
&\mathcal{I}(x_0)=\lim_{R\to\infty}
\left[\int_{\pa^+B^+_R}\frac{z^i}{|z|}\pa_{z^j}
\bar{g}_{x_0}(\pa_{z^i},\pa_{z^j})d\sigma
-\int_{\pa^+B^+_R}\frac{z^i}{|z|}\pa_{z^i}\bar{g}_{x_0}
(\pa_{z^j},\pa_{z^j})d\sigma\right].
\end{align*}
In particular, $\mathcal{I}(x_0)$ is the mass $m(\bar{g}_{x_0})$ of $(\bar M\backslash\{x_0\},\bar{g}_{x_0})$.
\end{proposition}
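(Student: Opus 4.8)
The plan is to follow \cite[Proposition 3.12]{almaraz5} (compare \cite[Proposition 4.3]{Brendle-Chen} and \cite[Appendix B]{Almaraz-Sun}), treating the two assertions separately. For (i), I would first record that by the conformal transformation laws for scalar curvature and for boundary mean curvature, the defining system \eqref{def:Green_funct} of $G_{x_0}$ says exactly that $R_{G_{x_0}^{4/(n-2)}g_{x_0}}\equiv0$ on $M\setminus\{x_0\}$ and $h_{G_{x_0}^{4/(n-2)}g_{x_0}}\equiv0$ on $\partial M\setminus\{x_0\}$. Since $\Phi$ is, in the Fermi chart $\Psi_{x_0}$, the Euclidean inversion $y\mapsto z=y/|y|^2$ — a diffeomorphism of $\mathbb{R}^n_+\setminus\{0\}$ onto itself mapping $\mathbb{R}^{n-1}$ to $\mathbb{R}^{n-1}$ — the push-forward $\bar g_{x_0}$ is isometric to $G_{x_0}^{4/(n-2)}g_{x_0}$ away from $x_0$, so $R_{\bar g_{x_0}}\equiv0$ and $h_{\bar g_{x_0}}\equiv0$ follow at once. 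The asymptotics near infinity I would extract from the Green's function estimates \eqref{est:Green_funct}: writing $g_{x_0}=\exp(h)$ and using $|y|=|z|^{-1}$, $(\Phi^{-1})^\ast\delta=|z|^{-4}\delta$, and $G_{x_0}(\Psi_{x_0}(y))^{4/(n-2)}=|y|^{-4}(1+o(1))=|z|^{4}(1+o(1))$ from the normalization, the leading term of $\bar g_{x_0}$ in the $z$-chart is $\delta_{ij}$, and the correction is governed by $G_{x_0}(\Psi_{x_0}(y))-|y|^{2-n}$ together with \eqref{propr:h} and \eqref{mean_curv_Fermi}. The key point is that the hypothesis $x_0\in\mathcal{Z}$ makes $|W_{g_0}|$ and $|\mathring\pi_{g_0}|$ — hence the Taylor data of $h$ feeding the $\sum_{|\alpha|\le d}$ terms in \eqref{est:Green_funct} — vanish to the required order, so that $G_{x_0}(\Psi_{x_0}(y))-|y|^{2-n}=O(|y|^{d+3-n})$ with one extra power of $|y|$ gained on each derivative; transporting through the inversion gives $|(\bar g_{x_0})_{ij}(z)-\delta_{ij}|+|z||\partial\bar g_{ij}|+|z|^2|\partial^2\bar g_{ij}|=O(|z|^{-(d+1)})$. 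Since $d+1>(n-2)/2$, this is exactly Definition \ref{def:asym} with order $d+1$, with $N_0$ a small neighborhood of $x_0$.

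For (ii), I would substitute $z=y/|y|^2$ directly into the flux integral $\mathcal{I}(x_0,\rho)$; the sphere $\partial^+B_\rho^+$ goes to $\partial^+B_R^+$ with $R=1/\rho$. A computation, using $g_{x_0}=\exp(h)$ and the expansion of $\bar g_{x_0}$ just obtained, should show that the weight factors $|y|^{2-2n}(|y|^2\partial_jh_{ij}-2ny^jh_{ij})$ and $|y|^{2-n}\partial_iG_{x_0}-G_{x_0}\partial_i|y|^{2-n}$ are precisely the inversion Jacobians of the metric part and of the conformal factor, so the two groups of terms in $\mathcal{I}(x_0,\rho)$ reassemble, up to quadratic error terms of order $2(d+1)>n-2$ whose fluxes over $\partial^+B_R^+$ tend to $0$, into the ADM-type integrand $\frac{z^i}{|z|}\big(\partial_{z^j}\bar g_{x_0}(\partial_{z^i},\partial_{z^j})-\partial_{z^i}\bar g_{x_0}(\partial_{z^j},\partial_{z^j})\big)$ on $\partial^+B_R^+$. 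Letting $R\to\infty$ (legitimate by Proposition \ref{prop:continuity_I(x_0,rho)}) gives the displayed formula. To identify it with the mass $m(\bar g_{x_0})$ of Definition, I would check that the lower-dimensional boundary term $\int_{\{|z|=R,\,z^n=0\}}\sum_a\bar g_{na}\frac{z^a}{|z|}d\sigma$ is zero: in Fermi coordinates $(g_{x_0})_{ni}=\delta_{ni}$ and $\sum_b y^bh_{ab}=0$ on $D_{2\rho}$ by \eqref{propr:h}, and these normalizations are preserved by the half-space inversion since it is radial, so $\bar g_{na}\equiv0$ along the boundary near infinity. Hence the mass integral reduces to the $\partial^+B_R^+$ term computed above, and $\mathcal{I}(x_0)=m(\bar g_{x_0})$.

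The hard part, I expect, is the bookkeeping in part (ii): verifying that the "$h$-flux" and the "Green's-function flux" in $\mathcal{I}(x_0,\rho)$ combine \emph{exactly} into the ADM integrand of $\bar g_{x_0}$, and that every remaining term is genuinely quadratic — hence of order $>n-2$ — so that its flux vanishes as $R\to\infty$; it is precisely here, and in pinning down the order-$(d+1)$ decay in part (i), that the hypothesis $x_0\in\mathcal{Z}$ is used. A secondary subtlety is checking that the push-forward preserves the Fermi normalization at infinity, which is what annihilates the codimension-one boundary contribution to the mass.
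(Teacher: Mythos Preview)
Your proposal is correct and follows precisely the approach the paper takes: the paper does not give its own proof of this proposition but simply states it ``for convenience'' and refers to \cite[Proposition 3.12]{almaraz5} and \cite[Proposition 4.3]{Brendle-Chen}, exactly the references you invoke. Your outline of the argument --- the conformal-law consequence of \eqref{def:Green_funct}, the use of $x_0\in\mathcal{Z}$ to kill the $\sum_{|\alpha|\le d}|\partial^\alpha h_{ab}|$ terms in \eqref{est:Green_funct}, the inversion bookkeeping, and the vanishing of the codimension-one boundary contribution via $(g_{x_0})_{ni}=\delta_{ni}$ --- matches the cited proofs.
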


\begin{proof}[Proof of Theorem \ref{Thm:main}]
(i) When $\pa M \setminus \mathcal{Z}\neq \emptyset$, we choose $x_0 \in \pa M \setminus \mathcal{Z}$. Thus the desired assertion follows from Proposition \ref{prop:energy_est}.

(ii) Assume that $\mathcal{I}(x_0)>0$ for some $x_0\in \mathcal{Z}$, it follows from Proposition \ref{prop:continuity_I(x_0,rho)} that
$$\mathcal{I}(x_0,\rho)>C^\ast \rho^{2d+4-n}|\log \rho|^2$$
for all $0<\rho<\rho_0$, where $\rho_0, C^\ast$ are the positive constants in Proposition \ref{prop:energy_est}. Based on the key estimate in Proposition \ref{prop:energy_est}, Theorem \ref{Thm:main} follows the same lines of \cite[Proposition 3.7]{almaraz5}.
\end{proof}


\begin{thebibliography}{99}

\bibitem{almaraz1}
S. Almaraz, \textit{An existence theorem of conformal scalar-flat metrics on manifolds with boundary},
Pacific J. Math. 248 (2010)  no. 1, 1-22.

\bibitem{almaraz5}
S. Almaraz, \textit{Convergence of scalar-flat metrics on manifolds with boundary under a Yamabe-type flow}, Journal of Differential Equations 259 (2015),  2626-2694.

\bibitem{almaraz-barbosa-lima}
S. Almaraz, E. Barbosa and L. de Lima, \textit{A positive mass theorem for asymptotically flat manifolds with a non-compact boundary},  Comm. Anal. Geom. 24 (2016), no. 4, 673-715.

\bibitem{Almaraz-Sun}
S. Almaraz and L. Sun, \textit{Convergence of the Yamabe flow on manifolds with minimal boundary}, preprint (2016), arXiv:1604.06789.

\bibitem{Araujo1}
 H. Araujo, \textit{Critical points of the total scalar curvature plus total mean curvature functional}, Indiana Univ. Math. J. 52 (2003), no. 1, 85-107.

\bibitem{Araujo2}
H. Araujo, \textit{Existence and compactness of minimizers of the Yamabe problem on manifolds with boundary}, Comm. Anal. Geom. 12 (2004), no. 3, 487-510.

\bibitem{aubin_book}
T. Aubin, \textit{Some nonlinear problems in Riemannian geometry}, Springer Monographs in Mathematics, Springer-Verlag, Berlin, 1998.

\bibitem{Bahri}
A. Bahri, \textit{Proof of the Yamabe conjecture without the positive mass conjecture for locally conformally flat manifolds}, Nonlinear Variational Problems and Partial Differential Equations (Isola dÕElba, 1990), Pitman Res. Notes Math. Ser. 320, Longman Sci. Tech., Harlow, 1995, 13-43.

\bibitem{Bahri-Brezis}
A. Bahri and H. Brezis, \textit{Non-linear elliptic equations on Riemannian manifolds with the Sobolev critical exponent},  Topics in Geometry, Progr. Nonlinear Differential Equa- tions Appl. 20, BirkhŠuser, Boston, 1996, 1-100.

\bibitem{Besse}
A. L. Besse, \textit{Einstein manifolds}, Ergebnisse der Mathematik und ihrer Grenzgebiete (3) [Results in Mathematics and Related Areas (3)], 10. Springer-Verlag, Berlin, 1987.

\bibitem{Brendle1}
S. Brendle, \textit{A generalization of the Yamabe flow for manifolds with boundary}, Asian J. Math. 6 (2002), 625-644.

\bibitem{Brendle4} 
S. Brendle, \textit{Convergence of the Yamabe flow for arbitrary initial energy}, J. Differential Geom. 69 (2005), 217-278.

\bibitem{Brendle2}
 S. Brendle, \textit{Convergence of the Yamabe flow in dimension 6 and higher}, Invent. Math. 170 (3) (2007), 541-576.

\bibitem{Brendle-Chen}
 S. Brendle and S. Chen, \textit{An existence theorem for the Yamabe problem on manifolds with boundary}, J. Eur. Math. Soc. (JEMS) 16 (2014), no. 5, 991-1016.
 
 \bibitem{Caffarelli-Gidas-Spruck}
 L. Caffarelli, B. Gidas and J. Spruck, \textit{Asymptotic symmetry and local behavior of semilinear elliptic equations with critical Sobolev growth}, Comm. Pure Appl. Math. 42 (1989), no. 3, 271-297.
 
\bibitem{ChenSophie}
S. Chen, \textit{Conformal deformation to scalar flat metrics with constant mean curvature on the boundary in higher dimensions}, arXiv:0912.1302v2.

\bibitem{ChenHo}
X. Chen and P. Ho,  \textit{Conformal curvature flows on compact manifold of negative Yamabe constant}, to appear in Indiana Univ. Math. J..

\bibitem{ChenHoSun}
X. Chen, P. Ho and L. Sun, \textit{Prescribed scalar curvature plus mean curvature flows in compact manifolds with boundary of negative conformal invariant},  to appear in Ann. Global Anal. Geom., arXiv:1605.01719v2.

\bibitem{ChenHoSun1}
X. Chen, P. Ho and L. Sun, \textit{Scalar curvature plus mean curvature flows in compact manifolds with boundary of positive conformal invariant}, in preparation.

\bibitem{cherrier}
P. Cherrier, \textit{Probl\`{e}mes de Neumann non lin\'{e}aires sur les vari\'{e}t\'{e}s Riemannienes}, J. Funct. Anal. 57 (1984) 154-206.

\bibitem{escobar2}
J. Escobar, \textit{Addendum:``Conformal deformation of a Riemannian metric to a scalar flat metric with constant mean curvature on the boundary'' [Ann. of Math. (2) 136 (1992), no. 1, 1-50; MR1173925]}, Ann. of Math. (2) 139 (1994), no. 3, 749-750.

\bibitem{escobar1}
J. Escobar, \textit{Conformal deformation of a Riemannian metric to a scalar flat metric with constant mean curvature on the boundary}, Ann. of Math. (2) 136 (1992), no. 1, 1-50.

\bibitem{escobar3}
J. Escobar, \textit{Conformal deformation of a Riemannian metric to a constant scalar curvature metric with constant mean curvature on the boundary}, Indiana Univ. Math. J. 45(4),(1996), 917-943.

\bibitem{escobar4}
J. Escobar, \textit{The Yamabe problem on manifolds with boundary}, J. Differential Geom. 35 (1992), no. 1, 21-84.
 
\bibitem{escobar5}
J. Escobar, \textit{Uniqueness theorems on conformal deformation of metrics, Sobolev inequalities, and an eigenvalue estimate}, Comm. Pure Appl. Math., 43 (1990), 857-883.

\bibitem{han-li1}
Z. C. Han and Y. Y. Li, \textit{The existence of conformal metrics with constant scalar curvature and constant boundary mean curvature}, Comm. Anal. Geom. 8 (2000), no. 4, 809-869.

\bibitem{han-li2}
 Z. C. Han and Y. Y. Li, \textit{The Yamabe problem on manifolds with boundary: existence and compactness results}, Duke Math. J. 99 (1999), no. 3, 489-542.

 \bibitem{li-zhu}
 Y. Li and M. Zhu,  \textit{Uniqueness theorems through the method of moving spheres}, Duke Math. J. 80 2 (1995), 383-417.
 
 \bibitem{Lee-Parker}
J. Lee and T. Parker, \textit{The Yamabe problem}, Bull. Amer. Math. Soc. (N.S.) 17 (1987), no. 1, 37-91. 

 \bibitem{marques1}
 F. Marques, \textit{Conformal deformations to scalar-flat metrics with constant mean curvature on the boundary}, Comm. Anal. Geom. 15 (2007), no. 2, 381-405.
 
 \bibitem{marques2}
 F. Marques, \textit{Existence results for the Yamabe problem on manifolds with boundary}, Indiana Univ. Math. J. 54 (2005), no.6, 1599-1620.  
 
 \bibitem{Raulot}
S. Raulot, \textit{Green functions for the Dirac operator under local boundary conditions and applications}, Ann. Global Anal. Geom. 39 (2011), no. 4, 337-359.
 
\end{thebibliography}
\end{document}